\documentclass{article}

\usepackage[final]{pdfpages}
\usepackage{stmaryrd}
\usepackage[T1]{fontenc}
\usepackage[utf8]{inputenc}
\usepackage{amsmath,amsfonts}
\usepackage{amsthm}
\usepackage{geometry}
\usepackage{setspace}
\usepackage{systeme}
\usepackage{amssymb}
\usepackage{graphicx}
\usepackage{etoolbox}
\usepackage{pstricks}
\usepackage{pst-solides3d}
\usepackage{marginnote}
\usepackage{xcolor} 
\usepackage{ulem} 
\usepackage[english]{babel}
\usepackage{hyperref}[hidelinks]
\usepackage{amsmath}
\usepackage{dsfont}
\usepackage{booktabs}
\usepackage{tabularx}
\usepackage{url}

\newtheorem{theorem}{Theorem}[section]
\newtheorem{lemma}[theorem]{Lemma}
\newtheorem{e-Proposition}[theorem]{Proposition}
\newtheorem{corollary}[theorem]{Corollary}
\newtheorem{remark}{Remark}
\newtheorem{e-definition}[theorem]{Definition}
\newtheorem{example}{Example}
\theoremstyle{definition}
\newtheorem{assumption}{Assumption}

\newcommand{\e}{\mathbb{E}}
\newcommand{\p}{\mathbb{P}}
\newcommand{\diff}{\, \mathrm{d}}

\newcommand{\un}{\mathds{1}}

\DeclareMathOperator{\tr}{tr}

\title{Quantitative propagation of chaos for particle systems with memory and their long-time behaviour}
\author{Adrienne Le Meur}

\begin{document}

\theoremstyle{definition}

\thispagestyle{empty}

\setcounter{page}{1}

\maketitle

\begin{abstract}
We study a class of interacting diffusion particle systems with memory and their mean-field limit using coupling methods. We first establish quantitative propagation of chaos estimates for this class of models. Our first main result is a uniform-in-time propagation of chaos estimate, under a suitable contraction condition. We also discuss conditions for existence of stationary solutions and show that long-time convergence to a non-stationary measure may also hold. This requires an asymptotic behaviour for the memory interaction and a memory loss condition. As a consequence, our second main result is that the particle system converges asymptotically to the same equilibrium up to the propagation of chaos error, which shows that the large-particle limit and the large-time limit are exchangeable.
\end{abstract}

\section{Introduction}

The purpose of this article is to study a system of interacting particles whose drift depends not only on the empirical distribution at the current time, but also on the past, through a memory kernel. More precisely, we consider the microscopic particle system
\begin{equation}
\label{particles}
    \diff X_t^{i, N} = \sigma \diff B_t^{i , N} -    F(X_t^{i, N}) \diff t + \displaystyle \int_0^t  L\left(t, s, X_t^{i, N}, \frac{1}{N} \sum_{j = 1}^N \delta_{X_s^{j, N}}\right) u_t(\mathrm{d} s) \ \diff t, \quad i = 1, \dots, N,
\end{equation}
where $(X_0^1, \dots, X_0^N) \sim \mu_0 \in \mathcal{P}\left( \left(\mathbb{R}^d\right)^N \right)$ is the initial condition, $(B_t^i)_{1 \leq i \leq N}$ are independent $k$-dimensional Brownian motions independent of $(X_0^1, \dots, X_0^N)$, $\sigma \in \mathcal{M}_{d, k}\left( \mathbb{R} \right)$, $u_t$ denotes a measure on $\left( \mathbb{R}_+, \mathcal{B}\left( \mathbb{R}_+ \right) \right)$ for all $t \geq 0$, $F : \mathbb{R}^d \to \mathbb{R}^d$ is a measurable function, $L: \mathbb{R}_+ \times \mathbb{R}_+ \times \mathbb{R}^d \times \mathcal{P}_p\left( \mathbb{R}^d \right) \to \mathbb{R}^d$ is a measurable function. The memory kernel $L$ makes the current time position interact with the past time marginals, and $u_t$ is a finite measure that encodes the amount of memory retained in the interaction. Typical examples for the measure $u_t$ are the uniform distribution over $[0, t]$, or over $[\max(0, t-s), t]$ for some window time $s > 0$, or exponential distributions. The case $u_t = \delta_t$ corresponds to standard memoryless mean-field interacting particles and their corresponding McKean-Vlasov diffusion processes. The particles also undergo an external force $F$. We also consider the associated nonlinear limit stochastic equation~\eqref{limit}.
\bigbreak

Let us motivate the main features of our model. In many interacting systems, the influence of the past is substantial, and neglecting it leads to a genuine loss of accuracy. This is the case in several classes of models arising from biology and statistical physics. A first important example is chemotaxis, a self-organization mechanism in which individuals interact with chemo-attractant generated by the population itself. At the macroscopic level, this phenomenon is classically described by Keller-Segel type models~\cite{KellerSegel71}. In the doubly parabolic case, the classical Keller-Segel system gives rise to a McKean-Vlasov SDE with memory, see~\cite{TalayTomasevic,TomasevicAAP} and~\cite{JabirTalayTomasevic,FournierTomasevic} for the corresponding particle system with memory. A second example comes from coarse graining in molecular dynamics, where effective models of observable functions of a fine dynamics may exhibit non-Markovian effects, for example with the Volterra-type models derived with the Mori-Zwanzig method~\cite{Zwanzig61,Mori65,GKS04}. These examples motivate the study of stochastic systems with memory as a structurally relevant modelling aspect. This is what is studied in recent works, by considering path-dependent McKean–Vlasov equations from the viewpoint of optimal control~\cite{optimalcontrolpathdependentmckeanvlasov}.

Another key ingredient of our model is the mean-field structure of the interaction. Mean-field interactions provide a description of systems in which each particle feels only the averaged influence of the population, either at the microscopic or at the macroscopic level. The probabilistic study of such limits goes back to the work of McKean~\cite{McKean66} and has since been developed into a vast literature on McKean-Vlasov equations and propagation of chaos. Standard references include the Saint-Flour notes of Sznitman~\cite{Sznitman}, the survey chapter by Méléard~\cite{Meleard96}, and the review~\cite{ChaintronDiez22a, ChaintronDiez22b}.
\bigbreak

Our objective is to quantify the convergence of the microscopic model towards the macroscopic one. In the usual terminology, this is a propagation of chaos problem: As the number of particles tends to infinity, the finite-dimensional marginals asymptotically tensorize, so that independence emerges at the limit~\cite{Sznitman,ChaintronDiez22a, ChaintronDiez22b}. When the speed of propagation of chaos is bounded uniformly in time, this is called uniform-in-time propagation of chaos~\cite{MALRIEU2001109}.

Our first main result Theorem~\ref{unifpropachaos} establishes precisely such a uniform-in-time propagation of chaos estimate, under a so-called contraction condition (in a sense that will be made precise in Theorem~\ref{unifpropachaos}).
An important feature of our criterion is that it only compares the asymptotic strength of memory with the confining effect of the force, which makes the condition both flexible and easy to interpret (see Section~\ref{secexamples}). The propagation of chaos estimate is done in Wasserstein distance, and for the time marginals of the empirical measure.

Our second main result concerns the asymptotic behaviour in time of the nonlinear dynamics and of the particle system, under the same condition as the uniform-in-time propagation of chaos, and under the assumption that, asymptotically in time, the effective memory interaction is stationary. In this case, the nonlinear dynamics converges in the long-time limit to a measure (Proposition~\ref{longtime}), even if there does not exist a stationary solution to the non-Markovian equation. The lack of existence of an invariant measure in all generality is due to the memory effect.
We therefore distinguish between stationary solutions and long-time equilibrium (see Definition~\ref{def:equilibrium}).
\bigbreak

In the Markovian McKean-Vlasov setting, the relation between propagation of chaos, convergence to equilibrium, and stationary solutions is classical~\cite{MALRIEU2001109,BolleyGuillinMalrieu10}. See also~\cite{BaoScheutzowYuan22} for a recent result on equilibrium in the functional McKean-Vlasov framework. Our result shows that a similar long-time convergence exists in the non-Markovian setting.

For systems with instantaneous interactions, the literature on propagation of chaos is by now very rich, both at the qualitative and quantitative levels. General methods were developed: coupling, Wasserstein stability, BBGKY hierarchies, entropy methods, and functional inequalities~\cite{Meleard96,ChaintronDiez22a, ChaintronDiez22b}. In particular, explicit convergence rates are known in several settings, including singular interaction regimes~\cite{LackerLeFlem23,FournierJourdain, olivera2025quantitativeapproximationburgerskellersegel}, and uniform-in-time propagation of chaos has been proved for several classes of instantaneous mean-field systems~\cite{DurmusEberleGuillinZimmer20,LackerLeFlem23,Eberle16,BolleyGuillinMalrieu10}. By contrast, much less seems to be available for genuinely non-Markovian mean-field models with memory, except a finite-time horizon propagation of chaos result~\cite{Bernouparticlemethod}, and Theorem~$2.2.6$ of~\cite{theseMilica}. This lack of quantitative results is one of the main motivations of our work. Let us also stress that allowing singular memory kernels would be a very natural direction, especially in view of the analogy with the two-dimensional Patlak-Keller-Segel equation~\cite{FournierJourdain}. However, even in the Lipschitz setting, obtaining uniform-in-time propagation of chaos already raises difficulties. The only known results for singular interaction are non-quantitative~\cite{JabirTalayTomasevic, FournierTomasevic}.

Indeed, the main difficulty compared with the Markovian setting is that the coupling error no longer satisfies a standard Grönwall inequality, because the interaction depends on the full past trajectory of the empirical measure. This leads naturally to integro-differential estimates. We show that, despite this non-Markovian structure of the memory models, we can still obtain quantitative propagation of chaos estimates, including bounds that are uniform-in-time, under suitable assumptions.
\bigbreak

\paragraph{Plan of the paper.} In Section~\ref{secresults}, we present the results, by first exposing the framework and the main assumptions, then the propagation of chaos results, and finally the results on stationary solutions and long-time convergence. In Section~\ref{secexamples}, we present applications of our work to different models. The proofs of the results on propagation of chaos (Theorem~\ref{bigtheorem} and Theorem~\ref{propachaos}) can be found in Section~\ref{secproofPoC}. Then the proofs of the uniform-in-time results (Lemma~\ref{moments} and Theorem~\ref{unifpropachaos}) are presented in Section~\ref{secproofunif}, and finally the proofs on stationary solutions and long-time behaviour (Proposition~\ref{meanfieldlongtime}, Proposition~\ref{coro}, Corollary~\ref{equilibrium}, Proposition~\ref{longtime} and Corollary~\ref{exchangelimits}) are in Section~\ref{secproofequilibrium}. The proofs of the well-posedness of the particle system and the limit equation can be found in the annex in Section~\ref{secproofwellposedness}.

\section{Results}
\label{secresults}
Here we start by introducing our framework and assumptions in Section~\ref{secframework}. The results on propagation of chaos are presented in Section~\ref{secPoC}, and those on the long-time behaviour are in Section~\ref{secequilibrium}.

\subsection{Framework and main assumptions}
\label{secframework}

We denote by $\mathcal{P}\left( E \right)$ the set of probability measures on the set $E$, and $\mathcal{P}_p\left( \mathbb{R}^d \right)$ the set of probability measures on $\mathbb{R}^d$ that admit a moment of order $p$. For $\mu$ and $\nu \in \mathcal{P}_p\left( \mathbb{R}^d \right)$, we call $\pi \in \mathcal{P}\left( \mathbb{R}^d \times \mathbb{R}^d \right)$ a coupling between $\mu$ and $\nu$ if $\mu(\mathrm{d} x) = \int_y \pi(\mathrm{d} x, \diff y)$ and $\nu(\mathrm{d} y) = \int_x \pi(\mathrm{d} x, \diff y)$. The set of couplings between $\mu$ and $\nu$ is denoted $\Pi\left( \mu, \nu \right)$. Let us define the Wasserstein distance by the formula
\begin{equation*}
        W_p\left( \mu, \nu \right) := \left( \inf_{\pi \in \Pi\left( \mu, \nu \right)} \int_{x, y} \left| x - y \right|^p \; \pi(x, y) \right)^{1/p}.
\end{equation*}

We consider a system of interacting diffusion SDEs whose drift depends not only on the current empirical distribution, but also on the past trajectory of the system through a memory kernel. Our formulation is designed to cover both persistent memory and localized-in-time memory effects, including purely instantaneous interactions as a limiting case.
\bigbreak
    
Consider the system~\eqref{particles}. Let us introduce our main assumptions on the coefficients of the particle system.
\begin{assumption}
\label{assumparticles}
We suppose that
\begin{itemize}
    \item[\boldmath$(1.1)$] \textbf{well-defined memory:} $u_t$ is a finite measure with support in $[0, t]$, for all $t \geq 0$, and $s \mapsto L(t, s, x, \mu)$ is integrable w.r.t. $u_t$ for all $t \geq 0$, $x \in \mathbb{R}^d$ and $\mu \in \mathcal{P}_p\left( \mathbb{R}^d \right)$,
    \item[\boldmath$(1.2)$] \textbf{Lipschitz regularity of the memory kernel:} there exist $p \geq 1$, $h^1 : \mathbb{R}_+ \times \mathbb{R}_+ \to \mathbb{R}_+$ and $h^2 : \mathbb{R}_+ \times \mathbb{R}_+ \to \mathbb{R}_+$ non-negative measurable functions such that for all $t$, $s \in \mathbb{R}_+$, $x$, $y \in \mathbb{R}^d$, and $\mu$, $\nu \in \mathcal{P}_p\left( \mathbb{R}^d \right)$
    \begin{equation}
    \label{memory}
        \begin{cases}
			\quad |L(t, s, x, \mu) - L(t, s, y, \mu)| &\leq h^1(t, s) \, |x - y|, \\
            \quad |L(t, s, x, \mu) - L(t, s, x, \nu)| &\leq h^2(t, s) \, W_p(\mu, \nu),
		 \end{cases}
    \end{equation}
\item[\boldmath$(1.3)$] \textbf{finite total memory weight:} $h^1$ and $h^2$ are such that for all $t \geq 0$
\begin{equation}
\label{H_t}
    H_t^1 := \int_0^t h^1(t, s) \, u_t(\mathrm{d} s) < \infty, \quad H_t^2 := \int_0^t h^2(t, s) \, u_t(\mathrm{d} s) < \infty.
\end{equation}
We call the quantity $H_t^1$ the instantaneous memory weight at time $t$ relative to the position, and $H_t^2$ the instantaneous memory weight at time $t$ relative to the measure.
We suppose that there exists $0 \leq D_T < \infty$ such that for all $0 \leq t \leq T$
    \begin{equation}
    \label{D_T}
        H_t^1 \leq D_T \quad \text{and} \quad H_t^2 \leq D_T.
    \end{equation}
    The quantity $D_T$ can be seen as the supremum memory weight on $[0, T]$,
    \item[\boldmath$(1.4)$] \textbf{regularity-in-time of the memory force:} $t \mapsto \int_0^t L(t, s, x, \mu(s)) \, u_t(\mathrm{d} s)$ is a measurable continuous function for every $x \in \mathbb{R}^d$ and $\mu \in \mathcal{C}\left( \mathbb{R}_+, \mathcal{P}_p\left( \mathbb{R}^d \right) \right)$,
    \item[\boldmath$(1.5)$] \textbf{regularity of the external force:} $F$ is locally Lipschitz and there exists $\kappa \in \mathbb{R}$ such that for all $x$, $y \in \mathbb{R}^d$, we have \begin{equation}
\label{pot}
    \big\langle x-y,  F(x) -  F(y)\big\rangle \geq \kappa |x-y|^2,
\end{equation}
that is to say the drift $-F$ is one-sided Lipschitz.
\end{itemize}
\end{assumption}
\begin{remark}
    If $\kappa > 0$, then it means the external force $F$ confines the particles. But in general, with Assumption~\ref{assumparticles}, the external force $F$ is allowed not to display a confining effect. The function $F$ can be zero for example.
    
    The quantities $H_t^1$ and $H_t^2$ play the role of an effective interaction strength induced by the memory kernel. Their role is central in our main results.
\end{remark}
\begin{example}
Let us take $\psi$ a Lipschitz function, and $\mathcal{D}$ a convex domain. The memory kernel
\begin{equation*}
    L(t, s, x, \mu) = - \frac{x}{\| x \| + 1} \psi\left( \left[ \int dist^p(y, \mathcal{D}) \mu(\diff y) \right]^{1/p} \right),
\end{equation*}
is Lipschitz w.r.t. $W_p$ but not w.r.t. $W_k$ with $k < p$. It penalizes the particles that wander away from $\mathcal{D}$ the domain of acceptable positions. If $p$ is large, even just one particle that escapes creates a stress.
\end{example}
We will drop $N$ in the index for readability. Before studying mean-field limits, we first show that the non-Markovian particle system~\eqref{particles} is well posed under the above hypotheses.
\begin{e-Proposition}
\label{wellposednesspart}
    Let Assumption~\ref{assumparticles} hold, let $m \geq \max(p, 2) \geq p \geq 1$, and $X_0^i$, $i = 1, \dots, N$ be $\mathcal{F}_0$-measurable random variables with a finite $m$-th moment. There exists a unique strong solution to the system of SDEs~\eqref{particles}.
\end{e-Proposition}

We next introduce the corresponding nonlinear McKean–Vlasov equation and establish its well-posedness under the same structural assumptions.
\begin{equation}
\label{limit}
     \diff \bar{X}_t = \displaystyle{\sigma \diff B_t -    F(\bar{X}_t) \diff t + \int_0^t L(t, s, \bar{X}_t, \mu_s) \ u_t(\mathrm{d} s) \ \diff t, \quad \mu_t = Law(\bar{X}_t), \quad t \geq 0}.
\end{equation}
\begin{e-Proposition}
\label{wellposedness}
    Let Assumption~\ref{assumparticles} hold. Let $q \geq p \geq 1$, and $\bar{X}_0$ be $\mathcal{F}_0$-measurable random variable with a finite $q$-th moment. There exists a unique strong solution to the McKean-Vlasov SDE~\eqref{limit}. Moreover, the solution $(\bar{X}_t)_{t \geq 0}$ is such that for all $T > 0$, $\sup_{0 \leq t \leq T} \e\left|\bar{X}_t\right|^q < \infty$.
\end{e-Proposition}
These two results provide the basis for the study of propagation of chaos and long time convergence for the particle system and its mean-field limit.

\begin{table}[h]
\centering
\caption{Important parameters of our framework}
\begin{tabularx}{\textwidth}{l X X X}
\toprule
\textbf{Notation} & \textbf{Interpretation of the parameter} & \textbf{Where to look} \\
\midrule

$\kappa$
& confinement strength of the external force $F$. If $\kappa \leq 0$, it means $F$ is not confining
& (see Assumption~\ref{assumparticles}) \\

$H_t^1$
& memory weight for the position at time $t$ associated to the memory kernel $L$ 
& (see definition in~\eqref{H_t}) \\

$H_t^2$
& memory weight for the measure at time $t$ associated to the memory kernel $L$ 
& (see definition in~\eqref{H_t}) \\

$\overline{D}^1$
& asymptotic memory weight for the spatial fluctuation
& (see Assumption~\ref{contraction}) \\

$\overline{D}^2$
& asymptotic memory weight for the fluctuation in measure
& (see Assumption~\ref{contraction}) \\

$p$
& Wasserstein or moment exponent 
& (see Assumption~\ref{assumparticles}) \\

$q$
& higher integrability exponent needed for the propagation of chaos results 
& (see Section~\ref{secPoC}) \\

$v(N)$
& speed of propagation of chaos 
& (see Section~\ref{secPoC}) \\

\bottomrule
\end{tabularx}
\end{table}

\subsection{Propagation of chaos}
\label{secPoC}

Let $p \geq 1$, $d \geq 1$, $q > p$. In the present paper the following quantity will denote our speed rate:
\begin{align}
\label{v(N)}
\nonumber
    v(N) := C \times & \left( (N^{-1/2} + N^{-(q-p)/q}) \un_{p > d/2, q \neq 2 p} + (N^{-1/2} \log(1 + N) + N^{-(q-p)/q}) \un_{p = d/2, q \neq 2p} \right. \\
    &\left. + (N^{-p/d} + N^{-(q-p)/q}) \un_{0< p < d/2, q \neq d/(d-p)}\right),
    \end{align}
where $C$ is a constant depending only on $p$, $d$, and $q$. This is the convergence rate, in the distance $W_p$ to the power $p$, of the empirical measure of a law, under an assumption on the existence of a $q$-th moment of the estimated law. It appears in~\cite{FournierGuillin}.
\bigbreak

We first address whether the particle system approximates the nonlinear limit on a fixed time horizon $[0,T]$. Because the drift depends on the full past trajectory of the empirical measure, the standard coupling argument produces an integro-differential error inequality rather than a classical Grönwall estimate. We first have a very general result for the finite-time horizon.
\begin{theorem}
\label{bigtheorem}
Let Assumption~\ref{assumparticles} hold. Let $(X_t^i)_{t \geq 0, 1 \leq i \leq N}$, be the solution to~\eqref{particles} with $\mathcal{F}_0$-measurable initial conditions with finite $m$-th moment, for a certain $m \geq \max(p, 2)$, and $\bar{X}_t^i$, $i = 1, \dots, N$ be $N$ solutions to~\eqref{limit} with i.i.d $\mathcal{F}_0$-measurable initial conditions with finite $q$-th moment for a certain $q > p$, each driven by the same $N$ independent Brownian motions $(B_t^{i, N})$ as in the system~\eqref{particles}. If we consider $x : \mathbb{R}_+ \to \mathbb{R}_+$ solution to the system
\begin{equation}
\label{upperbound}
    \left\{ \begin{array}{rcl}
        x'(t) &=& \alpha(t) x(t) + \displaystyle{\int_0^t} \beta(t, s) x(s) \, u_t(\mathrm{d} s) + v(N) C(t), \quad \text{for all $0 \leq t \leq T$}\\
        x(0) &=& \displaystyle \frac{1}{N} \sum_{i=1}^N \frac{\e|X_0^i - \bar{X}_0^i|^p}{p},
    \end{array}\right.
\end{equation} where $v(N)$ is defined in~\eqref{v(N)}, and $\alpha$, $\beta$ and $C$ are defined in~\eqref{parameters}, then we have for all $0 \leq t \leq T$
\begin{equation}
    \label{Gronwall}
        \frac{1}{N} \sum_{i = 1}^N \frac{\e|X_t^i - \bar{X}_t^i|^p}{p} \leq x(t).
    \end{equation}
\end{theorem}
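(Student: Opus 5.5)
We couple the two systems synchronously: $X^i$ and $\bar X^i$ are driven by the same Brownian motions $B^{i}$, so the noise cancels in the difference $X_t^i-\bar X_t^i$. This difference is then absolutely continuous in time, and for $p\ge 1$ we can differentiate $|X_t^i-\bar X_t^i|^p/p$ pathwise. For $p<2$ we regularize first, using $(|z|^2+\varepsilon)^{p/2}/p$ and letting $\varepsilon\to0$.

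\textbf{Step 1: pathwise derivative and splitting the memory term.} Write $\Delta^i_t=X^i_t-\bar X^i_t$. Then
\[
\frac{\diff}{\diff t}\frac{|\Delta^i_t|^p}{p} = |\Delta_t^i|^{p-2}\Big\langle \Delta^i_t, -(F(X^i_t)-F(\bar X^i_t)) + \int_0^t \big(L(t,s,X^i_t,\mu^N_s)-L(t,s,\bar X^i_t,\mu_s)\big)\,u_t(\diff s)\Big\rangle .
\]
The one-sided Lipschitz condition~\eqref{pot} bounds the force contribution by $-\kappa|\Delta^i_t|^p$. We split the memory difference by the triangle inequality into three pieces:
\begin{itemize}
\item the spatial term, bounded by $h^1(t,s)|\Delta^i_t|$;
\item the term comparing $\mu^N_s=\frac1N\sum_j\delta_{X^j_s}$ with $\bar\mu^N_s=\frac1N\sum_j\delta_{\bar X^j_s}$, bounded by $h^2(t,s)W_p(\mu^N_s,\bar\mu^N_s)$;
\item the term comparing $\bar\mu^N_s$ with $\mu_s$, bounded by $h^2(t,s)W_p(\bar\mu^N_s,\mu_s)$.
\end{itemize}
Integrating against $u_t$ turns the spatial term into $H^1_t|\Delta^i_t|^p$.

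\textbf{Step 2: Young's inequality and the two measure terms.} For the two measure terms we use Young's inequality in the form $|\Delta|^{p-1}a\le \frac{p-1}{p}|\Delta|^p+\frac1p a^p$, applied inside the $u_t$-integral with weight $h^2(t,s)$. This produces $\frac{p-1}{p}H^2_t|\Delta^i_t|^p$ (counted twice) plus $\frac1p\int_0^t h^2(t,s)\,W_p^p(\cdot,\cdot)\,u_t(\diff s)$.

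For the first measure term we use the trivial coupling:
\[
W_p^p(\mu^N_s,\bar\mu^N_s)\le \frac1N\sum_j|\Delta^j_s|^p .
\]
After averaging over $i$ and taking expectations, this gives exactly the memory term $\int_0^t\beta(t,s)x(s)\,u_t(\diff s)$, with $\beta$ proportional to $h^2$.

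For the second measure term, $\bar X^j_s$ are i.i.d. with law $\mu_s$ and a finite $q$-th moment that is bounded on $[0,T]$ (Proposition~\ref{wellposedness}). The Fournier--Guillin estimate then gives
\[
\e W_p^p(\bar\mu^N_s,\mu_s)\le v(N)\,\big(\e|\bar X_s|^q\big)^{p/q}.
\]
This yields the source term $v(N)C(t)$, with $C(t)=\frac1p\int_0^t h^2(t,s)\sup_{r\le T}(\e|\bar X_r|^q)^{p/q}\,u_t(\diff s)$, which is finite by~\eqref{D_T}.

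Collecting the local terms gives $\alpha(t)=p\big(-\kappa+H^1_t+\frac{2(p-1)}{p}H^2_t\big)$ or similar; the exact constants are whatever~\eqref{parameters} fixes.

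\textbf{Step 3: comparison argument.} Taking expectations requires integrability. We get it from the moment bounds in Propositions~\ref{wellposednesspart} and~\ref{wellposedness}, together with a localization by stopping times, since $F$ is only locally Lipschitz. This shows that $y(t)=\frac1N\sum_i\e|\Delta^i_t|^p/p$ satisfies the differential inequality
\[
y'(t)\le \alpha(t)y(t)+\int_0^t\beta(t,s)y(s)\,u_t(\diff s)+v(N)C(t), \qquad y(0)=x(0),
\]
in integrated form.

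This comparison step is the main obstacle, because the inequality is a Volterra-type integro-differential one and not a plain Grönwall inequality. We would set $z=x-y$, which satisfies $z'\ge\alpha z+\int_0^t\beta z\,u_t(\diff s)$ with $z(0)=0$. Multiplying by the integrating factor $e^{-\int_0^t\alpha}$ and using $\beta\ge0$, we show that $z\ge0$ as follows. Let $\tau=\inf\{t: z(t)<-\varepsilon e^{\lambda t}\}$ for a large $\lambda$. At $\tau$ the integral term is at least $-\varepsilon e^{\lambda\tau}D_T\sup\beta$-type quantities, and this contradicts the derivative sign once $\lambda$ exceeds $\sup|\alpha|+D_T$-type bounds on $[0,T]$. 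We then let $\varepsilon\to0$.

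If $\beta$ is unbounded relative to $u_t$, we would instead run a Picard iteration. The map $\Phi(y)(t)=x(0)e^{A(t)}+\int_0^te^{A(t)-A(r)}\big(\int_0^r\beta y\,u_r+v(N)C(r)\big)\diff r$ is monotone, and its iterates converge to $x$ on $[0,T]$ by the finite memory weight $D_T$. This gives $y\le x$, which is~\eqref{Gronwall}.
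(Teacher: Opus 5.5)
Your proposal is correct and follows the paper's proof essentially step by step. It uses the same synchronous coupling, the same three-way splitting of the memory drift, Young's inequality, the empirical coupling bound $W_p^p(\mu^N_s,\bar\mu^N_s)\le\frac1N\sum_j|\Delta^j_s|^p$ and the Fournier--Guillin estimate, which yield \eqref{parameters} with $\varepsilon=\tilde\varepsilon=1$ (the weighted Young inequality $|\Delta|^{p-1}a\le\frac{(p-1)\varepsilon}{p}|\Delta|^p+\frac{1}{p\varepsilon^{p-1}}a^p$ gives the general parameters).

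The only real difference is the comparison step. Your exponential barrier with $\lambda>\sup_{[0,T]}|\alpha|+\sup_{t\le T}\int_0^t\beta(t,s)\,u_t(\mathrm{d}s)$ replaces the paper's $\eta$-perturbed initial datum, short-time contradiction, subdivision of $[0,T]$ and stability limit $\eta\to0$. It is valid because $\beta\ge0$ and $\int_0^t\beta(t,s)\,u_t(\mathrm{d}s)\le D_T/\tilde\varepsilon^{p-1}$ by \eqref{D_T}, it is slightly cleaner, and it makes your Picard-iteration fallback unnecessary.
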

\begin{remark}
    Note that the integrability needed for the initial condition of the McKean-Vlasov solutions $\bar{X}_t^i$ is slightly stronger than what was needed for well posedness (we need $q > p$ instead of $q \geq p$). This is because we need to apply the result of convergence of~\cite{FournierGuillin}. 
\end{remark}
The previous theorem bounds the coupling error with a function $x$, but $x$ is defined implicitly through an auxiliary integro-differential equation. Depending on what can be said on the function $x$, it yields a bound on the error term between the system of particles and the limit equation, and this result includes several interesting cases. The following theorem gives a more readable propagation-of-chaos statement.
\begin{theorem}
\label{propachaos}
    Let us take $X_t^i$, and $\bar{X}_t^i$, $i = 1, \dots, N$ as in Theorem~\ref{bigtheorem}. Under Assumption~\ref{assumparticles}, there exist two positive constants $C_1(T)$ and $C_2(T)$ that depend only on $p$, $q$, $\kappa$, $T$, and $D_T$, such that
\begin{equation}
\label{eqPoC}
        \sup_{0 \leq t \leq T} \frac{1}{N} \sum_{i = 1}^N \frac{\e(|X_t^i - \bar{X}_t^i|^p)}{p} \leq C_1(T) \left( \frac{1}{N} \sum_{i = 1}^N \frac{
        \e(|X_0^i - \bar{X}_0^i|^p)}{p} + v(N) \cdot C_2(T) \right),
\end{equation}
where $v(N)$ is defined in~\eqref{v(N)}.
\end{theorem}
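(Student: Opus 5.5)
We derive Theorem~\ref{propachaos} from Theorem~\ref{bigtheorem}, bounding the solution $x$ of the integro-differential system~\eqref{upperbound} explicitly on $[0,T]$ with a Grönwall-type argument. By Theorem~\ref{bigtheorem}, for $0\le t\le T$,
\[
\frac1N\sum_{i=1}^N \frac{\e|X_t^i-\bar X_t^i|^p}{p}\le x(t),
\]
so it suffices to show
\[
\sup_{0\le t\le T} x(t)\le C_1(T)\bigl(x(0)+v(N)\,C_2(T)\bigr).
\]

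The first step is to record bounds on the coefficients in~\eqref{parameters}. From the coupling computation, $\alpha(t)$ should be of the form $-p\kappa$ plus constants times $H^1_t$ and $H^2_t$, with Young-inequality constants depending on $p$. Likewise $\beta(t,s)$ should be controlled by $h^2(t,s)$, possibly plus $h^1(t,s)$, and $C(t)$ by $H^2_t$ times moment quantities of $\mu_s$ for $s\le t$. Using~\eqref{D_T} and $\int_0^t h^j(t,s)\,u_t(\mathrm d s)=H^j_t$, I expect:
\begin{itemize}
\item $|\alpha(t)|\le A_T$ for a constant $A_T$ depending on $p,\kappa,D_T$;
\item $\int_0^t \beta(t,s)\,u_t(\mathrm d s)\le B_T$;
\item $C(t)\le \tilde C_T$ on $[0,T]$.
\end{itemize}
The bound on $C(t)$ uses Proposition~\ref{wellposedness}: $\sup_{t\le T}\e|\bar X_t|^q<\infty$, and this $q$-th moment is exactly what enters the Fournier--Guillin rate $v(N)$. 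I also need $\beta\ge0$ and $C\ge0$, which should hold by construction, so that $x$ is nonnegative.

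Next, set $M(t)=\sup_{0\le s\le t}x(s)$. Since $u_t$ has support in $[0,t]$ and $\beta\ge0$,
\[
\int_0^t\beta(t,s)x(s)\,u_t(\mathrm d s)\le B_T\,M(t).
\]
Hence $x'(t)\le A_T M(t)+B_T M(t)+v(N)\tilde C_T$. Integrating from $0$ to $t$ gives
\[
x(t)\le x(0)+\int_0^t\bigl((A_T+B_T)M(r)+v(N)\tilde C_T\bigr)\,\mathrm dr.
\]
The right-hand side is nondecreasing in $t$, so the same bound holds for $M(t)$. The classical Grönwall lemma applied to $M$ then yields
\[
M(T)\le e^{(A_T+B_T)T}\bigl(x(0)+T\tilde C_T\,v(N)\bigr).
\]
This gives~\eqref{eqPoC} with $C_1(T)=e^{(A_T+B_T)T}$ and $C_2(T)=T\tilde C_T$. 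Both constants depend only on $p,q,\kappa,T,D_T$, together with the moments of the initial laws implicit in the constant of $v(N)$.

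The main obstacle is the bookkeeping for $C(t)$ and $\beta$. One must check that $C(t)$ depends only on the allowed parameters through a uniform moment bound on $[0,T]$, and that the moment bound of Proposition~\ref{wellposedness} can be made quantitative in $T$ and $D_T$. If it is not directly quantitative, I would redo an Itô estimate on $|\bar X_t|^q$. That estimate uses~\eqref{pot} with $y=0$, the Lipschitz bounds~\eqref{memory} against a fixed reference point and $\delta_0$, and the moment bound $W_p(\mu_s,\delta_0)\le(\e|\bar X_s|^q)^{1/q}$. These give a closed integral inequality for $\sup_{s\le t}\e|\bar X_s|^q$, which the same sup-Grönwall trick controls. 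A secondary point is the sign of $\alpha$: if $\kappa$ is negative, $\alpha$ may be positive, but only $|\alpha|$ is used, so nothing changes on a finite horizon.
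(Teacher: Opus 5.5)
Your proposal is correct and follows the paper's proof essentially verbatim. With $\varepsilon=\Tilde{\varepsilon}=1$, the paper bounds $\alpha(t)x(t)$ by $\bigl(p(D_T-\kappa)_+ + 2D_T(p-1)\bigr)\sup_{v\le t}x(v)$ rather than by $|\alpha|$, and bounds the memory term by $D_T\sup_{v\le t}x(v)$ since $\beta=h^2$; it then integrates, takes the supremum and applies the classical Grönwall lemma. As you anticipated, its $C_2(T)=\frac{TD_T}{p}\sup_{0\le s\le T}\e(|\bar{X}_s^1|^q)^{p/q}$ only uses the finiteness given by Proposition~\ref{wellposedness}, so it also depends on the $q$-th moment of the limit process and not only on $p,q,\kappa,T,D_T$.
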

In other words, there is propagation of chaos for time marginals of the empirical measure on a finite time horizon with speed $v(N)$ when the particle system and nonlinear system are started from the same initial data, i.e. for all $i = 1, \dots, N$, $X_0^i = \bar{X}_0^i$.
\bigbreak

Up to this point, we only have finite-time estimates, and no control on the large-time behavior of the particle approximation. To obtain uniform-in-time bounds, we need the confining effect of the external force to dominate the influence of memory, for the particles not to wander too much, at least asymptotically in time.
Let us introduce a \textbf{contraction condition}.
\begin{assumption}
\label{contraction}
    We use the notations of Assumption~\ref{assumparticles}. Denote $\overline{D}^1 := \lim\sup_t H_t^1$ and $\overline{D}^2 := \lim\sup_t H_t^2$.
    We assume that $\kappa > \overline{D}^1 + \overline{D}^2$.
\end{assumption}
\begin{remark}
This condition only involves the asymptotic memory weights $\overline{D}^1$ and $\overline{D}^2$, rather than the supremum values of $H_t^1$ and $H_t^2$. This distinction is important for systems with bursts of memory. See example in Section~\ref{stressexample}.
\end{remark}
We first establish a uniform-in-time bound on the moments of the limit system.
\begin{lemma}
\label{moments}
Let $q > p$ such that $q \geq 2$. Under Assumptions~\ref{assumparticles} and~\ref{contraction}, and if 
\begin{equation*}
    \sup_{t \geq 0} \int_0^t |L(t, s, 0, \delta_0)| \ u_t(\mathrm{d} s) < \infty,
\end{equation*}
the solution $\bar{X}_t$ to~\eqref{limit} associated to the initial condition $\bar{X}_0 \in \mathcal{P}_q\left( \mathbb{R}^d \right)$ is such that 
\begin{equation*}
    \sup_{t \geq 0} \e(|\bar{X}_t|^q) < \infty.
\end{equation*}
\end{lemma}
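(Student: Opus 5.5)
Our plan is to apply Itô's formula to $|\bar{X}_t|^q$ and derive a differential inequality for $m_t := \e|\bar{X}_t|^q$, in which the contraction condition of Assumption~\ref{contraction} gives a strictly negative leading coefficient for large $t$. Since $\mu_s$ is the law of the process itself, the memory term will involve $\sup_{s\le t} m_s$. We will close the argument with a bootstrap on the running supremum $M_t := \sup_{0\le s\le t} m_s$, which is finite for every $t$ by Proposition~\ref{wellposedness}.

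\textbf{Step 1 (Itô computation).} Since $q\ge 2$, $x\mapsto |x|^q$ is $C^2$. Itô's formula gives
\begin{equation*}
\frac{d}{dt} m_t = \e\Big[ q|\bar X_t|^{q-2}\Big\langle \bar X_t, -F(\bar X_t) + \int_0^t L(t,s,\bar X_t,\mu_s)\,u_t(\diff s)\Big\rangle\Big] + \e\big[\tfrac12 \tr(\sigma\sigma^T \nabla^2|x|^q)(\bar X_t)\big],
\end{equation*}
with the usual localisation to kill the martingale part.

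\textbf{Step 2 (splitting the drift).} By \eqref{pot} with $y=0$ we get $\langle x, -F(x)\rangle \le -\kappa|x|^2 + |x||F(0)|$. For the memory term we write $L(t,s,x,\mu_s) = [L(t,s,x,\mu_s)-L(t,s,0,\mu_s)] + [L(t,s,0,\mu_s)-L(t,s,0,\delta_0)] + L(t,s,0,\delta_0)$. Using \eqref{memory}, $W_p(\mu_s,\delta_0) = (\e|\bar X_s|^p)^{1/p} \le m_s^{1/q}$, and the hypothesis $K:=\sup_t\int_0^t|L(t,s,0,\delta_0)|u_t(\diff s)<\infty$, we obtain
\begin{equation*}
\Big|\int_0^t L(t,s,x,\mu_s)\,u_t(\diff s)\Big| \le H^1_t|x| + H^2_t M_t^{1/q} + K.
\end{equation*}
Hölder's inequality gives $\e[|\bar X_t|^{q-1}]M_t^{1/q} \le m_t^{(q-1)/q}M_t^{1/q}$. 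The lower-order terms (those involving $|F(0)|$ and $K$, and the Itô correction of order $|x|^{q-2}$) are absorbed by Young's inequality into $\varepsilon m_t + C_\varepsilon$. We then fix $\delta>0$ with $\kappa - \overline D^1-\overline D^2 > 3\delta$ and $T_0$ such that $H^i_t \le \overline D^i+\delta$ for $t\ge T_0$. For $t\ge T_0$ this yields
\begin{equation*}
m_t' \le -q(\kappa-\overline D^1-\delta-\varepsilon)m_t + q(\overline D^2+\delta) m_t^{(q-1)/q}M_t^{1/q} + C.
\end{equation*}

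\textbf{Step 3 (bootstrap, the main obstacle).} The delicate point is the nonlocal term $M_t$. We intend to show that $M_t \le R := \max(M_{T_0}, R_0)$ for all $t$, where $R_0$ is large. Suppose instead that $m$ first reaches a level $R' > R$ at some time $t^*>T_0$. Then $M_{t^*} = m_{t^*} = R'$ and $m_{t^*}' \ge 0$. On the other hand, the inequality of Step 2 evaluated at $t^*$ gives
\begin{equation*}
m_{t^*}' \le -q(\kappa-\overline D^1-\overline D^2-2\delta-\varepsilon)R' + C < 0
\end{equation*}
once $R_0$ is large and $\varepsilon<\delta$. This is a contradiction.

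To make this rigorous we would instead work with $m_t\le R+\eta$ and a first crossing time, with $\eta\downarrow 0$. We also need $m$ to be absolutely continuous, so that the derivative inequality holds in integrated form. Integrating the bound over a short interval just before the crossing time, where $m_s$ is close to $R'$, gives the same contradiction. Finite-time boundedness on $[0,T_0]$ comes from Proposition~\ref{wellposedness}, and we conclude $\sup_{t\ge0} m_t\le R<\infty$.
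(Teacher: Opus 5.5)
Your proposal is correct and follows essentially the paper's route. Both use Itô's formula for $|x|^q$, the same splitting of $F$ and of $L$ around $(0,\delta_0)$ with $W_p(\mu_s,\delta_0)\le(\e|\bar X_s|^q)^{1/q}$, Young's inequality for the lower-order terms, and a first-crossing barrier argument, which the paper packages as Lemma~\ref{Lem:boundedgronwall}. The only difference is how the memory term is handled. You bound it by the running supremum $M_t$ and Hölder's inequality. The paper instead uses Young (with $\alpha=1$) and Jensen to keep the weighted integral $\int_0^t h^2(t,s)\,\e|\bar X_s|^q\,u_t(\diff s)$, a linear integro-differential form that it reuses later (with Assumption~\ref{memoryloss}, in Lemma~\ref{Lem:decreasingGronwall}) to get decay rather than mere boundedness.
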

We use the notations of Assumption~\ref{assumparticles} and denote \begin{equation}
\label{momentuniform}
M_{p, q} := \left( \underset{t \geq 0}{\sup }\; \e(|\bar{X}_t|^q) \right)^{p/q}.
\end{equation}
Under all of the above assumptions, we achieve a uniform-in-time propagation of chaos estimate.
\begin{theorem}
\label{unifpropachaos}
    Let $q > p$ such that $q \geq 2$. Let us assume that Assumption~\ref{assumparticles} and~\ref{contraction} are satisfied.
Let us take $X_t^i$, and $\bar{X}_t^i$, $i = 1, \dots, N$ as in Theorem~\ref{bigtheorem}. Consider $M_{p, q}$ defined in~\eqref{momentuniform}.  If 
\begin{equation*}
    \sup_{t \geq 0} \int_0^t |L(t, s, 0, \delta_0)| \ u_t(\mathrm{d} s) < \infty,
\end{equation*}
there exist constants $C_1$, $C_2$ and $C_3$ depending only on $p$, $q$, $\kappa$, $\overline{D}^1$ and $\overline{D}^2$, where $C_1$ and $C_2$ are positive, such that we have
    \begin{equation}
    \label{eq:PoCunif}
    \sup_{t \geq 0} \frac{1}{N} \sum_{i=1}^N \e( |X_t^i - \bar{X}_t^i|^p) \leq C_1 M_{p, q} v(N) + C_2 \left( \frac{1}{N} \sum_{i=1}^N \e( |X_0^i - \bar{X}_0^i|^p) - C_3 M_{p, q} v(N)\right)_+,
    \end{equation} 
    where $v(N)$ is defined in~\eqref{v(N)}.
\end{theorem}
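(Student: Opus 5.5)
We will start from Theorem~\ref{bigtheorem}. It reduces the problem to bounding the solution $x$ of the integro-differential comparison equation~\eqref{upperbound} uniformly in time. First we make the coefficients explicit. Applying It\^o's formula to $|X_t^i-\bar X_t^i|^p/p$ (the noise cancels by synchronous coupling) gives a one-sided Lipschitz contribution $-\kappa|X_t^i-\bar X_t^i|^p$ from $F$. The memory drift is split as
\[
L(t,s,X_t^i,\mu^N_s)-L(t,s,\bar X_t^i,\mu_s)
\]
into a position increment, bounded by $h^1|X_t^i-\bar X_t^i|$, and a measure increment, bounded by $h^2 W_p(\mu^N_s,\mu_s)$. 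We then use
\[
W_p(\mu^N_s,\mu_s)\le W_p(\mu^N_s,\bar\mu^N_s)+W_p(\bar\mu^N_s,\mu_s),
\]
where $\bar\mu^N_s$ is the empirical measure of the $\bar X^i_s$. By Young's inequality and exchangeability, the first piece contributes $h^2\,p\,x(s)$ up to constants. Fournier--Guillin~\cite{FournierGuillin} bounds the second piece by $v(N)M_{p,q}$, using the uniform moment bound of Lemma~\ref{moments}, which makes $C(t)\lesssim M_{p,q}$ bounded in $t$. Up to the Young constants (chosen with a small parameter $\varepsilon$), this yields $\alpha(t)\le -p\kappa+(p-1)(H^1_t+H^2_t)+\varepsilon$ and $\int_0^t\beta(t,s)\,u_t(\mathrm ds)\le H^1_t+H^2_t$, or a similar weighted split. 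The exact form depends on~\eqref{parameters}, but we only need that asymptotically $-\alpha(t)-\int_0^t\beta(t,s)\,u_t(\mathrm ds)\ge \lambda>0$. Assumption~\ref{contraction}, $\kappa>\overline D^1+\overline D^2$, gives exactly this for $t\ge t_0$, after choosing $\varepsilon$ small.

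The core step is a maximum-principle argument for the memory equation. Set $y(t)=\sup_{s\le t}x(s)$. At any time $t\ge t_0$ where $x(t)=y(t)$ and $x$ is increasing, we have
\[
x'(t)\le \alpha(t)x(t)+\Big(\int_0^t\beta(t,s)\,u_t(\mathrm ds)\Big)y(t)+v(N)C(t)\le -\lambda\,y(t)+v(N)\,\bar C M_{p,q}.
\]
So $x$ cannot exceed the level $K:=\bar C M_{p,q}v(N)/\lambda$ while increasing. Hence, for $t\ge t_0$,
\[
x(t)\le \max\big(K,\ \sup_{s\le t_0}x(s)\big).
\]
More carefully, compare $x$ with the barrier $\max(K,y(t_0))$ and argue by contradiction at the first crossing time. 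On the compact interval $[0,t_0]$, Theorem~\ref{propachaos} bounds $\sup_{s\le t_0}x(s)$ by $C_1(t_0)(x(0)+v(N)C_2(t_0))$. Since $t_0$ depends only on $\kappa$, $\overline D^1$, $\overline D^2$ and the $H$'s, this yields a bound of the form $C_1M_{p,q}v(N)+C_2x(0)$.

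To get the sharper form~\eqref{eq:PoCunif} with the positive part, we refine the barrier. If $x(0)\le C_3M_{p,q}v(N)$ for a suitable $C_3$, the solution starting below the level $K$ stays below a multiple of $K$: we use the same maximum argument, noting that on $[0,t_0]$ the linear growth is controlled relative to the forcing. Otherwise, linearity of~\eqref{upperbound} lets us split $x=x_1+x_2$. Here $x_1$ starts at $\min(x(0),C_3M_{p,q}v(N))$ and carries the forcing. The other part $x_2$ starts at $(x(0)-C_3M_{p,q}v(N))_+$ with zero forcing and is bounded by $C_2$ times its initial value through the same sup-argument. Positivity of $\beta$ ensures comparison principles hold; we check this by the first-crossing argument.

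The main obstacle is the comparison/maximum principle for the integro-differential equation. The memory term involves all past values weighted by $u_t$, which need not be a probability measure, and the contraction only holds asymptotically. So the burn-in interval $[0,t_0]$, where $H_t$ may have bursts exceeding $\kappa$, must be handled by the finite-horizon Theorem~\ref{propachaos}, and the constants must be tracked so that they depend only on the stated parameters.
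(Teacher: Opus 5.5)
Your proposal is correct and follows essentially the paper's route. You use the comparison function from Theorem~\ref{bigtheorem}, the bounds $H_t^1\le D^1$, $H_t^2\le D^2$ past some $t_0$ that make $\alpha+\int_0^t\beta\,u_t(\mathrm{d}s)$ uniformly negative, a first-crossing maximum-principle argument (which is exactly Lemma~\ref{Lem:boundedgronwall}), and Theorem~\ref{propachaos} on the burn-in interval $[0,t_0]$.

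Three minor remarks:
\begin{itemize}
\item Your stated split of the coefficients is off. The $h^1$ term enters only $\alpha$, as $pH_t^1$, and $\int_0^t\beta\,u_t(\mathrm{d}s)=H_t^2/\tilde\varepsilon^{p-1}$. This is harmless, since your barrier argument only uses the sum $\alpha+\int\beta$.
\item Your running-sup barrier $\max\bigl(K,\sup_{s\le t_0}x(s)\bigr)$ explicitly accounts for the memory carried over from $[0,t_0]$. The paper's application of Lemma~\ref{Lem:boundedgronwall}, which is stated with $\int_{t_0}^t$, glosses over this part.
\item The positive-part form follows directly from $\max(K,a)=K+(a-K)_+$, with $C_3$ allowed to be negative. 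Your linear splitting $x=x_1+x_2$ is therefore unnecessary.
\end{itemize}
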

In particular, when the particle system and nonlinear system are started from the same initial data, i.e. $X_0^i = \bar{X}_0^i$ a.s, for all $i = 1, \dots, N$ with the notations of the corollary, there is uniform-in-time propagation of chaos with speed $v(N)$.

\begin{remark}\label{rem:creationofchaos}
    By analogy with the memoryless McKean-Vlasov situation, it might be tempting to expect that, at the cost of removing the supremum in time in the left-hand side of~\eqref{eq:PoCunif}, under the contraction condition of Assumption~\eqref{contraction}, the inequality actually holds with $C_1 = C_1(t)$ vanishing as $t\rightarrow \infty$. In this situation, even if particles are initially highly correlated so that it is not possible to couple them with i.i.d. particles in order to have $e(0):= \frac{1}{N} \sum_{i=1}^N \e( |X_0^i - \bar{X}_0^i|^p)$ vanishing when $N \to + \infty$,  it would still be true that, for $(t_N)_N$ such that $C_1(t_N) e(0) \xrightarrow[N \to + \infty]{} 0$, we would be able to bound $\frac{1}{N} \sum_{i=1}^N \sup_{t \geq t_N} \e( |X_t^i - \bar{X}_t^i|^p)$
by some vanishing function as $N\rightarrow\infty$.  This would mean that the particles become approximately independent in large time although they were not initially. This is sometimes referred to as \textit{creation of chaos} (see e.g.~\cite{bernou2025creation} and references within). However, with memory, the contraction condition is not enough, as illustrated with the counter-example  in Section~\ref{seccounterexample}.
\end{remark}

\subsection{Equilibrium and long-time convergence}
\label{secequilibrium}

We study the long time asymptotic behavior of the nonlinear dynamics. 
\begin{e-definition}
\label{def:equilibrium}
    We say that $\mu_\infty$ is an equilibrium of~\eqref{limit} if, for all initial distribution, $\mu_t$ converges to $\mu_\infty$ as $t \to + \infty$.
\end{e-definition}
In the usual memoryless McKean-Vlasov case, such an equilibrium is necessarily a stationary solution in the sense that $\bar{X}_t \sim \mu_\infty$ for all $t \geq 0$ if $\bar{X}_0 \sim \mu_\infty$. For such an equilibrium, we need the system to forget its past. We formalize this through the following \textbf{memory loss condition}.
\begin{assumption}
\label{memoryloss}
Assume that for $t_0 \geq 0$ large enough, for all $t \geq t_0$, $H_t^2 > 0$. Assume also that for all $s > 0$, we have
    \begin{equation*}
        \int_0^s \frac{h^2(t, v) \; u_t(\mathrm{d} v)}{\int_0^t h^2(t, w) \; u_t(\mathrm{d} w)} \xrightarrow[t \to + \infty]{} 0.
    \end{equation*}
\end{assumption}
In other words, Assumption~\ref{memoryloss} states that any fixed time window around initial time becomes asymptotically negligible compared with the total effective memory weight.
We refer to Subsection~\ref{seccounterexample} for a comment on this assumption.
\bigbreak
Under contraction and memory loss conditions, solutions to the nonlinear mean-field equation contract asymptotically in time.
\begin{e-Proposition}
\label{meanfieldlongtime}
    Suppose that Assumption~\ref{assumparticles},~\ref{contraction} and~\ref{memoryloss} are satisfied. Let $\bar{X}_t$ and $\bar{Y}_t$ be solutions to the system~\eqref{limit} with the initial conditions respectively $\bar{X}_0$ and $\bar{Y}_0$ and driven by the same Brownian motion. Suppose there exists $q > p$ such that $\bar{X}_0, \bar{Y}_0 \in \mathbb{L}^q\left( \mathbb{R}^d \right)$. Finally, assume that 
    \begin{equation*}
        \sup_{t \geq 0} \int_0^t |L(t, s, 0, \delta_0)| \ u_t(\mathrm{d} s) < \infty.
    \end{equation*}
    There exists $m : \mathbb{R}_+ \to \mathbb{R}_+$ a decreasing function such that $m(t)$ goes to $0$ as $t$ goes to infinity and
    \begin{equation*}
        \e(|\bar{X}_t - \bar{Y}_t|^p) \leq m(t) \e(|\bar{X}_0 - \bar{Y}_0|^p).
    \end{equation*}
\end{e-Proposition}

Since all of the solutions to the system~\eqref{limit} have the same behaviour asymptotically in time, no matter the initial distribution, it is legitimate to ask whether there exists a stationary solution or not, in which case the stationary solution is a candidate for the limit behaviour. This is indeed the case under an assumption of \textbf{stationary effective memory}.
\begin{assumption}
\label{b}
There exists $b : \mathbb{R}^d \times \mathcal{P}_p\left( \mathbb{R}^d \right) \to \mathbb{R}^d$ such that, for all $t \geq 0$, $x \in \mathbb{R}^d$, and $\mu \in \mathcal{P}_p\left( \mathbb{R}^d \right)$, we have \[ \int_0^t L(t, s, x, \mu) \, u_t(\mathrm{d} s) = b(x, \mu).\]
\end{assumption}

\begin{remark}
For example if \[L(t, s, x, \mu) := K(t,s) b(x, \mu) / C,\] and if additionally for all $t \geq 0$
\[\int_0^t K(t, s) \, u_t(\mathrm{d} s) = C,\]
then Assumption~\ref{b} is satisfied. This can be interpreted as a memory kernel with separation of variables, and where the weight of the time component of the memory interaction is constant over time.
\end{remark}

\begin{e-Proposition}
\label{coro}
Suppose that Assumption~\ref{assumparticles} and~\ref{b} are satisfied. Let us take $\mu_\infty \in \mathcal{P}_p\left( \mathbb{R}^d \right)$. Then the following statements are equivalent.
\begin{itemize}
    \item[$(1)$] The measure $\mu_\infty$ is an invariant measure of 
    \begin{equation}
        \label{fixed}
        \diff \bar{X}_t = - F(\bar{X}_t) \diff t + b(\bar{X}_t, \mu_\infty) \diff t + \sigma \diff B_t.
    \end{equation}
    \item[$(2)$] Let $\bar{X}_t$ be the solution to the McKean-Vlasov SDE 
    \begin{equation}
    \label{changing}
        \diff \bar{X}_t = - F(\bar{X}_t) \diff t + b(\bar{X}_t, \mu_t) \diff t + \sigma \diff B_t, \quad Law\left( \bar{X}_t \right) = \mu_t,
    \end{equation}
    with initial condition $\bar{X}_0 \sim \mu_\infty$. Then $\mu_t = \mu_\infty$ for all $t \geq 0$.
    \item[$(3)$] Let $\bar{X}_t$ be the solution to the McKean-Vlasov SDE 
    \begin{equation}
        \label{fullmemory}
    \diff \bar{X}_t = - F(\bar{X}_t) \diff t + \int_0^t L(t, s, \bar{X}_t, \mu_s) \; u_t(\mathrm{d} s) \ \diff t + \sigma \diff B_t, \quad Law\left( \bar{X}_t \right) = \mu_t,
    \end{equation}
    with initial condition $\bar{X}_0 \sim \mu_\infty$. Then $\mu_t = \mu_\infty$ for all $t \geq 0$.
    \end{itemize}
\end{e-Proposition}
This shows that the equilibrium of the full memory equation~\eqref{fullmemory}, which here is a stationary solution thanks to Assumption~\ref{b}, can be characterized through a memoryless McKean–Vlasov dynamics.

\begin{corollary}
\label{equilibrium}
Under Assumption~\ref{assumparticles},~\ref{contraction} and~\ref{b}, and if 
\begin{equation*}
    \sup_{t \geq 0} \int_0^t |L(t, s, 0, \delta_0)| \ u_t(\mathrm{d} s) < \infty,
\end{equation*}
there exists a unique equilibrium to~\eqref{fullmemory}, which is the unique stationary solution.
\end{corollary}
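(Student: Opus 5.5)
Under Assumption~\ref{b} the memory drift at time $t$ equals $\int_0^t L(t,s,x,\mu_s)\,u_t(\mathrm{d}s)$, which collapses to $b(x,\mu)$ only when all past marginals coincide with one measure $\mu$. The plan is therefore to build a stationary solution from the memoryless equation~\eqref{changing}, transfer it to~\eqref{fullmemory} via Proposition~\ref{coro}, and then obtain attraction of all solutions by a coupling argument in the spirit of Proposition~\ref{meanfieldlongtime}. That proposition needs Assumption~\ref{memoryloss}, which is not assumed here, so at the end we compare with the stationary solution directly.

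\textbf{Existence of a stationary solution.} Consider the map $\Phi$ sending $\nu\in\mathcal{P}_p(\mathbb{R}^d)$ to the invariant measure of the linear SDE $\diff Y_t = -F(Y_t)\diff t + b(Y_t,\nu)\diff t + \sigma\diff B_t$.
\begin{itemize}
\item The drift of this SDE has one-sided Lipschitz constant $\kappa - H^1_t$ in $x$. Since $b$ does not depend on $t$, $b$ is $\inf_t H^1_t$-Lipschitz in $x$, and likewise $b$ is $\inf_t H^2_t$-Lipschitz in $\mu$ for $W_p$. Taking $t$ large gives constants at most $\overline{D}^1$ and $\overline{D}^2$ (up to an arbitrary $\varepsilon$).
\item By Assumption~\ref{contraction}, $\kappa-\overline{D}^1>0$. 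A synchronous coupling of two copies of the linear SDE then contracts $\e|Y_t-Y_t'|^p$ at rate $p(\kappa-\overline D^1)$, which yields existence and uniqueness of the invariant measure $\Phi(\nu)$, with finite moments (here the bound on $|L(t,s,0,\delta_0)|$ controls $|b(0,\delta_0)|$).
\item Couple the stationary processes for $\nu$ and $\nu'$ with the same noise. The $\mu$-Lipschitz bound gives
\[ W_p(\Phi(\nu),\Phi(\nu')) \le \frac{\overline{D}^2}{\kappa-\overline{D}^1}\, W_p(\nu,\nu'). \]
\item This ratio is $<1$, so $\Phi$ is a contraction on the complete space $(\mathcal{P}_p,W_p)$ and has a unique fixed point $\mu_\infty$.
\end{itemize}
Statement $(1)$ of Proposition~\ref{coro} then holds, and by $(1)\Rightarrow(3)$, $\mu_\infty$ is a stationary solution of~\eqref{fullmemory}.

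\textbf{Uniqueness of stationary solutions.} If $\mu'$ is another stationary solution, then $(3)\Rightarrow(1)$ of Proposition~\ref{coro} shows that $\mu'$ is a fixed point of $\Phi$, hence $\mu'=\mu_\infty$.

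\textbf{Convergence to $\mu_\infty$ from any initial law.} Let $\bar X_t$ solve~\eqref{fullmemory} from an arbitrary initial law, and let $\bar Y_t$ be the stationary solution started at $\mu_\infty$, driven by the same Brownian motion. Since $\mathrm{Law}(\bar Y_s)=\mu_\infty$ for every $s$, the drift of $\bar Y$ is exactly $b(\bar Y_t,\mu_\infty)$. Set $e(t)=\e|\bar X_t-\bar Y_t|^p$. Itô's formula, the Lipschitz bounds and $W_p(\mu_s,\mu_\infty)^p\le e(s)$ give
\[ e'(t)\le -p(\kappa-H^1_t)\,e(t) + p\,e(t)^{1-1/p}\int_0^t h^2(t,s)\,e(s)^{1/p}\,u_t(\mathrm{d}s). \]
To conclude we want $e(t)\to0$. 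The natural quantity is $M(t)=\sup_{s\ge t}e(s)$: for $t\ge T_0$ with $H^1_t+H^2_t\le \kappa-\delta$ this gives a bound on $\limsup e$, and the uniform moment bound of Lemma~\ref{moments} makes everything finite.

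\textbf{Main obstacle.} Without memory loss, the early past $s<T_0$ enters the integral and the estimate above only yields boundedness of $e$, not decay. Two routes to try:
\begin{itemize}
\item First, exploit Assumption~\ref{b} more strongly. Write the memory term as $b(x,\mu_\infty)$ plus a perturbation linear in the measure discrepancies along the past; Assumption~\ref{b} gives exact cancellation when those discrepancies are constant in time, so a decay estimate for the perturbation may follow.
\item Failing that, show that the supremum-in-past estimate forces $\limsup e \le \frac{\overline{D}^2}{\kappa-\overline{D}^1}\limsup e$. Since this ratio is $<1$, this gives $\limsup e=0$. Making this step rigorous — in particular, absorbing the weight that $u_t$ places on early times — is where I expect the real difficulty.
\end{itemize}
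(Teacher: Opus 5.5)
Your third step, convergence of every solution of \eqref{fullmemory} to $\mu_\infty$, is a genuine gap, and it cannot be filled. Under the stated hypotheses such convergence is false.

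The first impression model of Section~\ref{seccounterexample} satisfies every assumption of the corollary. Take $u_t=\delta_0$ and $L(t,s,x,\mu)=\int y\,\mu(\mathrm{d}y)$. Then Assumption~\ref{b} holds with $b(x,\mu)=\int y\,\mu(\mathrm{d}y)$, and $h^1\equiv 0$, $h^2\equiv 1$, so $\kappa>1$ gives Assumption~\ref{contraction}. Also $L(t,s,0,\delta_0)=0$.

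With $F(x)=\kappa x$ in $d=1$, the solution of \eqref{fullmemory} has mean $m_t=m_0e^{-\kappa t}+\frac{m_0}{\kappa}(1-e^{-\kappa t})\to m_0/\kappa$, where $m_0=\e[\bar X_0]$. The unique stationary law is centred, since stationarity forces $(1-\kappa)m_0=0$. Hence $\mu_t\not\to\mu_\infty$ whenever $m_0\neq 0$, and there is no equilibrium in the sense of Definition~\ref{def:equilibrium}. The paper's own remark in that section says the same.

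In your coupling, $\bar X_t-\bar Y_t=e^{-\kappa t}(\bar X_0-\bar Y_0)+\frac{m_0}{\kappa}(1-e^{-\kappa t})$, so $e(t)\to|m_0/\kappa|^p>0$. All the memory weight sits at $s=0$, so the term $h^2(t,0)\,W_p(\mu_0,\mu_\infty)$ in your inequality never decays. Neither your cancellation idea nor the $\limsup$ absorption can remove it. The missing ingredient is precisely Assumption~\ref{memoryloss}. With it, your second route is exactly Lemma~\ref{Lem:decreasingGronwall}, or you can combine your stationary law with Proposition~\ref{meanfieldlongtime}.

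Your first two steps are correct and prove that \eqref{fullmemory} has a unique stationary solution. That is all that can be claimed here, and it is also all the paper's argument delivers. The paper shows that the flow $\Phi_t$ of the memoryless equation \eqref{changing} is a $W_p$-contraction for large $t$, via synchronous coupling and the fact that the time-independent $b$ inherits the Lipschitz constants $H^1_t,H^2_t$ for every $t$. It then takes the unique fixed point, uses the semigroup property to show it is invariant, and invokes Proposition~\ref{coro}. That proposition transfers stationarity only, not attraction.

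Your route instead takes a fixed point of the frozen-measure map sending $\nu$ to the invariant law of \eqref{fixed} with $\mu_\infty$ replaced by $\nu$, with explicit factor $\overline{D}^2/(\kappa-\overline{D}^1)<1$. This avoids the nonlinear semigroup argument, at the cost of establishing separately the existence, uniqueness and moments of the frozen SDE's invariant law, which your synchronous coupling does. The paper's route additionally gives exponential convergence for \eqref{changing} from any initial law, though, as the example shows, not for \eqref{fullmemory}. So drop the convergence step and state the conclusion as existence and uniqueness of a stationary solution, or add Assumption~\ref{memoryloss} to obtain a true equilibrium.
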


Note that Assumption~\ref{b} is restrictive and there could be an equilibrium even if there is no stationary solution. This is the case for example in Section~\ref{secexKellerSegel}. A consequence of Proposition~\ref{meanfieldlongtime} is the long time convergence of the limit trajectory defined by~\eqref{limit} to the equilibrium, under a weaker assumption than Assumption~\ref{b} that, asymptotically in time, the effective memory has a stationary behaviour.
\begin{e-Proposition}
\label{longtime}
    Suppose that there exists $b : \mathbb{R}^d \times \mathcal{P}_p\left( \mathbb{R}^d \right) \to \mathbb{R}^d$ such that, for all $\mu \in \mathcal{P}_p\left( \mathbb{R}^d \right)$, we have \begin{equation}
    \label{asympb}
        \int_{\mathbb{R}^d} \left| \int_0^t L(t, s, x, \mu) \, u_t(\mathrm{d} s) - b(x, \mu) \right|^p \mu(\mathrm{d} x) \xrightarrow[t \to + \infty]{} 0.
    \end{equation} 
    Suppose also that Assumption~\ref{assumparticles}, ~\ref{contraction} and~\ref{memoryloss} are satisfied, and that
    \begin{equation*}
        \sup_{t \geq 0} \int_0^t |L(t, s, 0, \delta_0)| \ u_t(\mathrm{d}s) < \infty.
    \end{equation*}
    Then, there exists a unique $\mu_\infty \in \mathcal{P}_p\left( \mathbb{R}^d \right)$ such that, if $\bar{X}_t$ is the solution to the McKean-Vlasov SDE 
    \begin{equation}
        \label{McKeanmemory}
        \diff \bar{X}_t = - F(\bar{X}_t) \diff t + \int_0^t L(t, s, \bar{X}_t, \mu_s) \; u_t(\mathrm{d} s) \ \diff t + \sigma \diff B_t, \quad Law\left( \bar{X}_t \right) = \mu_t,
    \end{equation} 
    with initial condition $\bar{X}_0 \sim \mu_0 \in \mathcal{P}_p\left( \mathbb{R}^d \right)$, then $\mu_t \xrightarrow[t \to +\infty]{} \mu_\infty$. The measure $\mu_\infty$ is the unique stationary solution to memoryless McKean-Vlasov equation
    \begin{equation*}
        \diff \bar{X}_t = - F(\bar{X}_t) \diff t + b(\bar{X}_t, \mu_t) \diff t + \sigma \diff B_t, \quad Law\left( \bar{X}_t \right) = \mu_t,
    \end{equation*}
    which can be seen as the stationary version of~\eqref{McKeanmemory}.
\end{e-Proposition}
\begin{remark}
See Section~\ref{stressexample} for an example where~\eqref{asympb} is satisfied.
\end{remark}
By combining the uniform-in-time propagation of chaos result (Theorem~\ref{unifpropachaos}), with the previous result on the long-time convergence of the limit equation (Proposition~\ref{longtime}), we get the long-time behaviour of the particles.
\begin{corollary}
\label{exchangelimits}
Under Assumptions~\ref{assumparticles},~\ref{contraction},~\ref{memoryloss}, and~\ref{b}, and if 
\begin{equation*}
    \sup_{t \geq 0} \int_0^t |L(t, s, 0, \delta_0)| \ u_t(\mathrm{d} s) < \infty,
\end{equation*}
there exist a decreasing function $m : \mathbb{R}_+ \to \mathbb{R}_+$, and a constant $C > 0$ such that for all $t \geq 0$, and $N \in \mathbb{N}^*$, we have \begin{equation*}
    W_p(Law(X_t^{1, N}) \, , \, \mu_\infty) \leq C v(N)^{1/p} + m(t),
\end{equation*}
with $X_t^{i, N}$, for $i = 1, \dots, N$ solution to~\eqref{particles} with i.i.d. initial condition with a moment of order $q > p$.
\end{corollary}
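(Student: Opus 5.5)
The plan is a triangle inequality in $W_p$ through the law of the coupled nonlinear particle $\bar{X}_t^1$:
\[
W_p\bigl(Law(X_t^{1,N}),\mu_\infty\bigr)\le W_p\bigl(Law(X_t^{1,N}),\mu_t\bigr)+W_p(\mu_t,\mu_\infty).
\]
Here $\mu_t=Law(\bar X^1_t)$ is the solution of~\eqref{limit} started from the common initial law $\mu_0$ of the $X_0^{i}$. We take the synchronous coupling of Theorem~\ref{bigtheorem}: $\bar X_0^i=X_0^i$, with the $\bar X^i$ driven by the same Brownian motions. This is admissible because the initial conditions are i.i.d. with a finite $q$-th moment, $q>p$. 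If needed, we first reduce to $q\ge 2$ (or assume it, as in Theorem~\ref{unifpropachaos}) so that Lemma~\ref{moments} applies.

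For the first term, the particle system is exchangeable: the initial law is a product of identical laws and the dynamics are symmetric. Hence $\e|X_t^1-\bar X_t^1|^p=\frac1N\sum_i\e|X_t^i-\bar X_t^i|^p$. The pair $(X_t^1,\bar X_t^1)$ is a coupling of the two marginals, so
\[
W_p(Law(X_t^{1,N}),\mu_t)^p\le \frac1N\sum_i\e|X_t^i-\bar X_t^i|^p.
\]
Since the initial error vanishes, Theorem~\ref{unifpropachaos} gives the bound $C_1M_{p,q}v(N)$ for this sum, uniformly in $t$. Lemma~\ref{moments} ensures $M_{p,q}<\infty$. This yields the term $Cv(N)^{1/p}$ with $C=(C_1M_{p,q})^{1/p}$.

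For the second term, Assumption~\ref{b} implies that~\eqref{asympb} holds trivially, with zero error. Proposition~\ref{longtime} (equivalently Corollary~\ref{equilibrium}) then gives a unique $\mu_\infty$, the stationary solution, with $\mu_t\to\mu_\infty$. I would set $m(t):=\sup_{s\ge t}W_p(\mu_s,\mu_\infty)$. This function is non-increasing, tends to $0$, and is finite: $W_p(\mu_s,\mu_\infty)$ is bounded on compact intervals by Proposition~\ref{wellposedness}, and it is bounded near infinity by convergence.

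The main obstacle is that Proposition~\ref{longtime} only states convergence, possibly in a weak sense, while we need convergence in $W_p$. To obtain it, I would use Corollary~\ref{equilibrium}: start a second solution $\bar Y$ from $\bar Y_0\sim\mu_\infty$, so that $Law(\bar Y_t)=\mu_\infty$ for all $t$. Coupling $\bar Y$ synchronously with $\bar X^1$ and applying Proposition~\ref{meanfieldlongtime} gives
\[
W_p(\mu_t,\mu_\infty)^p\le \e|\bar X_t-\bar Y_t|^p\le m_0(t)\,\e|\bar X_0-\bar Y_0|^p\to0.
\]
This requires $\mu_\infty$ to have a finite $q$-th moment. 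That follows from Lemma~\ref{moments} together with lower semicontinuity of moments along the convergence $\mu_t\to\mu_\infty$. One can even take $m(t)=(m_0(t)\,\e|\bar X_0-\bar Y_0|^p)^{1/p}$, which is directly decreasing. Summing the two bounds concludes.
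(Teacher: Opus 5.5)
Your proposal is correct and follows the paper's proof: a triangle inequality through $Law(\bar X_t^{1,N})$, with Theorem~\ref{unifpropachaos} (taking $\bar X_0^i = X_0^i$) for the first term and Proposition~\ref{longtime} for the second. Your extra details fill in points the paper leaves implicit: exchangeability to pass from the averaged error to particle $1$, the explicit non-increasing $m(t)=\sup_{s\ge t}W_p(\mu_s,\mu_\infty)$ covering all $t\ge 0$, and the caveat that $q\ge 2$ is needed for Lemma~\ref{moments}. The $W_p$ upgrade via Proposition~\ref{meanfieldlongtime} is harmless but not needed, since Step~4 of the proof of Proposition~\ref{longtime} already gives $W_p(\mu_t,\mu_\infty)\to 0$.
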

\begin{remark}
    This shows that the limit in $N \to + \infty$ and the limit in $t \to + \infty$ are exchangeable.
\end{remark}

\section{Examples, applications and perspectives}
\label{secexamples}
In this section, we present applications of our work to models from existing papers, or to toy models that illustrate the relevance of the mathematical setting and the assumptions.
\subsection{A non-contractive memory model}
\label{counterexcontractioncondition}
We introduce a simple non-contractive model showing that uniform-in-time propagation of chaos may fail as soon as the contraction condition of Assumption~\ref{contraction} is not satisfied.
Let us fix $D^1$ and $D^2$. We then take $\kappa := D^1 + D^2$.
Let us take $\alpha > 0$ and consider the particle model
\begin{equation}
\label{Eq:partnoncontractive}
    \diff X_t^i = - \kappa X_t^i \diff t + \left[ D^1 X_t^i + D^2 \frac{\alpha}{N} \sum_{j = 1}^N \int_0^t X_s^j \, e^{- \alpha (t - s)} \diff s \right] \diff t + \diff B_t^i, \quad i = 1, \dots, N.
\end{equation}
\begin{remark}
    If $\kappa = D^2$ and $D^1 = 0$, the drift of this model can be interpreted as the particle $i$ being attracted to the mean over time and particles of positions.
\end{remark}
Let us denote
\begin{equation*}
    F(x) := \kappa x, \quad L(t, s, x, \mu) := D^1 x + D^2 \int y \diff \mu(y), \quad \text{and } u_t(\mathrm{d} s) := \alpha \, e^{- \alpha (t - s)} \un_{[0, t]}(s) \diff s.
\end{equation*}
Then Assumption~\ref{assumparticles} is satisfied with $p = 1$, $h^1 \equiv D^1$, and $h^2 \equiv D^2$, so that $\kappa = \overline{D}^1 + \overline{D}^2 := \limsup H_t^1 + \limsup H_t^2$. Additionally we have
\begin{equation*}
    \int_0^t |L(t, s, 0, \delta_0)| \ u_t(\mathrm{d} s) = D^2,
\end{equation*}
which is bounded uniformly in time.
We fall exactly in the limit case of Assumption~\ref{contraction}. Let us prove that propagation of chaos is not uniform-in-time. This will show that the contraction condition $\kappa > \overline{D}^1 + \overline{D}^2$ is sharp. We will do this in two steps.

\paragraph{Step 1. We determine the order in $t$ of $\e\left( |X_t^i|^2 \right)$.}
Denote the memory variable
\begin{equation*}
    Z_t := \frac{\alpha}{N} \sum_{j = 1}^N \int_0^t X_s^j \, e^{- \alpha (t - s)} \diff s.
\end{equation*}
Then, because $\kappa - D^1 = D^2$, we get
\begin{equation*}
    \left\{
    \begin{array}{rcl}
        \diff X_t^i &=& - D^2 X_t^i \diff t + D^2 Z_t \diff t + \diff B_t^i, \quad i = 1, \dots, N, \\
        \diff Z_t &=& \alpha \left( \frac{1}{N} \sum_{i = 1}^N X_t^i - Z_t \right) \diff t.
    \end{array}
    \right.
\end{equation*}
It is then natural to introduce the quantity $Y_t := \frac{1}{N} \sum_{i = 1}^N X_t^i$. Since $\frac{1}{N} \displaystyle{\sum_{i = 1}^N} B_t^i$ is a combination of independent Brownian motions, it is a Gaussian process, of variance function $\frac{t}{N}$. Therefore there exists $W_t$ a Brownian motion such that $\frac{1}{N} \displaystyle{\sum_{i = 1}^N} B_t^i = \frac{1}{\sqrt{N}} W_t$. This yields the following system on $(Y_t, Z_t)$
\begin{equation*}
     \left\{
    \begin{array}{rcl}
        \diff Y_t &=& \left[ - D^2 Y_t + D^2 Z_t \right] \diff t + \frac{1}{\sqrt{N}} \diff W_t, \\
        \diff Z_t &=& \alpha \left( Y_t - Z_t \right) \diff t.
    \end{array}
    \right.
\end{equation*}
We then transform the system into
\begin{equation*}
    \left\{
    \begin{array}{rcl}
        \diff \left( \alpha Y_t + D^2 Z_t \right) &=& \frac{\alpha}{\sqrt{N}} \diff W_t, \\
        \diff \left( Y_t - Z_t \right) &=& - \left( D^2 + \alpha \right) \left( Y_t - Z_t \right) \diff t + \frac{1}{\sqrt{N}} \diff W_t,
    \end{array}
    \right.
\end{equation*}
so that
\begin{equation*}
    \left\{
    \begin{array}{rcl}
        \alpha Y_t + D^2 Z_t &=& \alpha Y_0 + D^2 Z_0 + \frac{\alpha}{\sqrt{N}} W_t, \\
        Y_t - Z_t &=& \left( Y_0 - Z_0 \right) e^{- \left( D^2 + \alpha \right) t} + \frac{1}{\sqrt{N}} \displaystyle \int_0^t e^{- \left( D^2 + \alpha \right) (t - s)} \diff W_s.
    \end{array}
    \right.
\end{equation*}
Let us remark that $Z_0 = 0$ and use this to deduce the expression of $Y_t$
\begin{equation*}
        Y_t = \frac{\alpha}{D^2 + \alpha} Y_0  + \frac{D^2}{D^2 + \alpha} Y_0 e^{- \left( D^2 + \alpha \right) t} + \frac{\alpha}{\left( D^2 + \alpha \right) \sqrt{N}} W_t + \frac{D^2}{\left( D^2 + \alpha \right) \sqrt{N}} \int_0^t e^{- \left( D^2 + \alpha \right) (t - s)} \diff W_s.
\end{equation*}
Thanks to the Itô isometry, we have
\begin{equation*}
    \e\left[ \left| \int_0^t e^{- (D^2 + \alpha) (t - s)} \diff W_s \right|^2 \right] = \int_0^t e^{- 2 (D^2 + \alpha) (t - s)} \diff s \underset{t \to + \infty}{=} O(1),
\end{equation*}
and 
\begin{equation*}
    \e\left[ W_t \int_0^t e^{- (D^2 + \alpha) (t - s)} \diff W_s \right] = \int_0^t e^{- (D^2 + \alpha) (t - s)} \diff s \underset{t \to + \infty}{=} O(1)
\end{equation*}
so that we can compute the variance of $Y_t$ thanks to the Itô isometry
\begin{equation*}
    Var\left( Y_t\right) \underset{t \to + \infty}{=} O(1) + \frac{\alpha^2 t}{(D^2 + \alpha)^2 N}.
\end{equation*}
We also establish that $U_t^i := X_t^i - Y_t$ satisfies
\begin{equation*}
    \diff U_t^i = - D^2 U_t^i \diff t + \diff B_t^i - \frac{1}{\sqrt{N}} \diff W_t,
\end{equation*}
so that it has bounded-in-time variance, by similar arguments to the ones used before. It can additionally be expressed by the formula
\begin{equation*}
    U_t^i = U_0^i e^{- D^2 t} + \sigma \int_0^t e^{- D^2 (t - s)} \diff \beta_s^i.
\end{equation*}
where $\beta_t^i$ is a Brownian motion, and \begin{equation*}
    \sigma := \sqrt{\left( 1 - \frac{1}{N} \right)^2 + \frac{N - 1}{N^2}} = \sqrt{1 - \frac{1}{N}}.
\end{equation*}
By Itô isometry again, $Var\left( U_t^i \right) \underset{t \to + \infty}{=} O(1)$. Additionally,
\begin{align*}
    \langle \beta^i, W \rangle_t &= \left\langle B^i - \frac{1}{\sqrt{N}} W, W \right\rangle_t \\
    &= \frac{t}{\sqrt{N}} - \frac{t}{\sqrt{N}} = 0,
\end{align*}
so that
\begin{align*}
    \e\left( Y_t U_t^i \right) \underset{t \to + \infty}{=}& O(1) + \frac{\sigma \alpha}{(D^2 + \alpha) \sqrt{N}} \e\left[ W_t \int_0^t e^{- D^2 (t - s)} \diff \beta_s^i \right] \\
    &+ \frac{\sigma (D^2)^2}{(D^2 + \alpha) \sqrt{N}} \e\left[ \int_0^t e^{- (D^2 + \alpha) (t - s)} \diff W_s \int_0^t e^{- D^2 (t - s)} \diff \beta_s^i \right] \\
    \underset{t \to + \infty}{=}& O(1) + \frac{\sigma \alpha}{(D^2 + \alpha) \sqrt{N}} \int_0^t e^{- D^2 (t - s)} \diff \langle \beta^i, W \rangle_s \\
    &+ \frac{\sigma (D^2)^2}{(D^2 + \alpha) \sqrt{N}} \int_0^t e^{- (2 D^2 + \alpha) (t - s)} \diff \langle \beta^i, W \rangle_s \underset{t \to + \infty}{=} O(1).
\end{align*}
Therefore $X_t^i$ has variance
\begin{align*}
    Var\left( X_t^i \right) &= Var\left( Y_t \right) + 2 Cov\left( Y_t, U_t^i \right) + Var\left( U_t^i \right) \\
    &\underset{t \to + \infty}{=} O(1) + \frac{\alpha^2 t}{(D^2 + \alpha)^2 N}.
\end{align*}

\paragraph{Step 2. We study the variance of the limit equation.}
Let us define $\bar{X}_t$ the non-linear limit process satisfying
\begin{equation*}
    \diff \bar{X}_t = - D^2 \left[ \bar{X}_t - \alpha \int_0^t \mu_s \, e^{- \alpha (t - s)} \diff s \right] \diff t + \diff B_t, \quad \e\left( \bar{X}_t \right) = \mu_t.
\end{equation*} 
By introducing $z_t := \alpha \int_0^t \mu_s \, e^{- \alpha (t - s)} \diff s$, we get the linear system
\begin{equation*}
    \left\{
    \begin{array}{rcl}
        \mu_t' &=& - D^2 ( \mu_t - z_t ) \\
        z_t' &=& \alpha \mu_t - \alpha z_t,
    \end{array}
    \right.
\end{equation*} 
so that by choosing $\mu_0 = z_0 = 0$, we get that $\mu \equiv 0$. Therefore $\bar{X}_t$ satisfies
\begin{equation*}
    \diff \bar{X}_t = - D^2 \bar{X}_t \diff t + \diff B_t.
\end{equation*} 
That is why 
\begin{equation*}
    \bar{X}_t = \bar{X}_0 e^{- D^2 t} + \int_0^t e^{- D^2 (t - s)} \diff B_s, \quad \text{and } Var\left( \bar{X}_t \right) \underset{t \to + \infty}{=} O(1).
\end{equation*} 

\paragraph{Conclusion.} We can conclude that there is no uniform-in-time propagation of chaos thanks to the following lemma.
\begin{lemma}
If there is uniform-in-time propagation of chaos of the particle system $(X_t^i)$ to the limit process $\bar{X}_t$, and if $\e\left( |\bar{X}_t|^2 \right)$ is bounded uniformly in $t$, then 
\begin{equation*}
    \sup_{t \geq 0} \left| \sqrt{\e\left( |X_t^i|^2 \right)} - \sqrt{\e\left( |\bar{X}_t| \right)} \right| \xrightarrow[N \to + \infty]{} 0.
\end{equation*}
\end{lemma}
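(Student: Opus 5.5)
The argument reduces to the triangle inequality in $\mathbb{L}^2$ (Minkowski). I read the statement with the evident intended quantity $\sqrt{\e(|\bar{X}_t^i|^2)}$ in the second square root. "Uniform-in-time propagation of chaos" I take in the sense of Theorem~\ref{unifpropachaos} with $p=2$ and identical initial data:
\[
\varepsilon_N := \sup_{t \geq 0} \frac{1}{N}\sum_{i=1}^N \e\left(|X_t^i - \bar{X}_t^i|^2\right) \xrightarrow[N \to +\infty]{} 0 ,
\]
where the $\bar{X}^i$ are the synchronously coupled copies of the nonlinear process. In the model~\eqref{Eq:partnoncontractive} the particles are exchangeable, so each summand equals $\e(|X_t^1 - \bar{X}_t^1|^2)$. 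Hence $\sup_t \e(|X_t^i - \bar{X}_t^i|^2) \leq \varepsilon_N$ for every $i$. Moreover, each $\bar{X}^i_t$ has the law $\mu_t$ of $\bar{X}_t$, so $\e(|\bar{X}_t^i|^2) = \e(|\bar{X}_t|^2)$.

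Next, apply the reverse triangle inequality for the norm $\|Y\|_{2} := \sqrt{\e(|Y|^2)}$. For every $t \geq 0$,
\[
\left| \sqrt{\e(|X_t^i|^2)} - \sqrt{\e(|\bar{X}_t|^2)} \right| = \left| \|X_t^i\|_2 - \|\bar{X}_t^i\|_2 \right| \leq \|X_t^i - \bar{X}_t^i\|_2 \leq \sqrt{\varepsilon_N}.
\]
The right-hand side does not depend on $t$, so taking the supremum over $t$ and letting $N \to \infty$ gives the claim.

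The boundedness of $\e(|\bar{X}_t|^2)$ is needed only to make both square roots finite; it is also what gives the lemma its force in the conclusion. Indeed, Step 1 and Step 2 show that the left-hand quantity grows like $\sqrt{O(1) + c\,t/N}$ while the right-hand one stays $O(1)$. The supremum over $t$ of their difference is therefore infinite for each fixed $N$, which contradicts the lemma.

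The only delicate point is the first one: passing from the averaged, exchangeable bound to a per-particle bound, and identifying the law of $\bar{X}^i_t$ with $\mu_t$. If propagation of chaos is instead stated in $W_2$ between $\mathrm{Law}(X^i_t)$ and $\mu_t$, I would use the triangle inequality for $W_2$ with $\delta_0$, using $W_2(\nu,\delta_0) = \sqrt{\int |x|^2 \nu(\mathrm{d}x)}$. This gives the same bound with no coupling argument at all.
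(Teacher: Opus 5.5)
Your proof is correct and is essentially the paper's argument. The paper applies the same reverse triangle inequality in $\mathbb{L}^2$ and then optimizes over couplings to bound the difference by $W_2(\mathrm{Law}(X_t^i),\mathrm{Law}(\bar{X}_t))$, which is the coupling-free variant in your last paragraph; your main route instead fixes the synchronous coupling and uses exchangeability to pass from the averaged bound to a per-particle one, and your reading of the second square root as $\sqrt{\e(|\bar{X}_t|^2)}$ is the intended one.
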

\begin{proof}
    By the reverse triangular inequality, we get that
    \begin{equation*}
        \left| \sqrt{\e\left( |X_t^i|^2 \right)} - \sqrt{\e\left( |\bar{X}_t| \right)} \right| \leq \sqrt{\e\left( \left| X_t^i - \bar{X}_t \right|^2 \right)},
    \end{equation*}
    so that by optimizing on the couplings, we get that 
    \begin{equation*}
        \left| \sqrt{\e\left( |X_t^i|^2 \right)} - \sqrt{\e\left( |\bar{X}_t| \right)} \right| \leq W_2\left( Law\left( X_t^i \right), Law\left( \bar{X}_t \right) \right).
    \end{equation*}
    Since the right-hand side converges to $0$ uniformly-in-time, this concludes.
\end{proof}
In the case of the particle system~\eqref{Eq:partnoncontractive}, we have
\begin{equation*}
    \left| \sqrt{\e\left( |X_t^i|^2 \right)} - \sqrt{\e\left( |\bar{X}_t| \right)} \right| \underset{t \to + \infty}{=} O(1) + \frac{\alpha^2 t}{(1 + \alpha)^2 N},
\end{equation*}
so the uniform-in-time propagation of chaos fails when $\kappa = \overline{D}^1 + \overline{D}^2$, that is to say when the contraction condition of Assumption~\ref{contraction} is not satisfied. This indeed shows that the condition $\kappa > \overline{D}^1 + \overline{D}^2$ required in Theorem~\ref{unifpropachaos} is necessary whatever the values of $\overline{D}^1$ and $\overline{D}^2$.

\subsection{Chemotaxis modelling and Keller-Segel equation}
\label{secexKellerSegel}
A motivating example comes from the Keller-Segel equation. The later is a model for the biological phenomenon called chemotaxis responsible for self-organizing behaviour of particles due to the interaction with a chemo-attractant they can produce. The usual parabolic-parabolic model is unattainable due to the singular interaction (see~\cite{FournierTomasevic, JabirTalayTomasevic}). That is why we turn to a regularized version of the Keller-Segel equation
\begin{equation*}
\left\{
    \begin{array}{rcl}
        \partial_t u(t,x) &=& \frac{1}{2} \Delta u(t, x) - \nabla \cdot \left( u(t, x) \left[ \xi \nabla h(t, x) - \nabla V(x) \right] \right) \\
        \frac{1}{\gamma} \partial_t h(t, x) &=& \frac{1}{2} \Delta h(t, x) - \alpha h(t, x) + \beta \int u(t, z) g(t - x) \diff z,
    \end{array}
    \right.
\end{equation*}
that is studied in~\cite{budhiraja2017uniformtimeinteractingparticle}.
Amarjit Budhiraja and Wai-Tong (Louis) Fan study the macroscopic model
\begin{equation*}
\left\{
    \begin{array}{rcl}
        \diff \bar{X}_t &=& \diff B_t - \nabla V(\bar{X}_t) \diff t + \chi \nabla c(t, \bar{X}_t) \diff t, \\
        \frac{1}{\gamma} \partial_t c(t, x) &=& \frac{1}{2} \Delta c(t, x) - \alpha c(t, x) + \beta \int_{\mathbb{R}^d} g(z - x) \diff \mu_t(z), \quad x \in \mathbb{R}^d, \\
        Law\left( \bar{X}_t \right) &=& \mu_t,
    \end{array}
    \right.
\end{equation*}
associated to the particle system
\begin{equation*}
    \left\{
    \begin{array}{rcl}
        \diff X_t^{i, N} &=& \diff B_t - \nabla V(X_t^{i, N}) \diff t + \chi \nabla c_N(t, X_t^{i, N}) \diff t, \quad i = 1, \dots, N \\
        \frac{1}{\gamma} \partial_t c_N(t, x) &=& \frac{1}{2} \Delta c_N(t, x) - \alpha c_N(t, x) + \frac{\beta}{N} \sum_{i = 1}^N g(X_t^{i, N} - x), \quad x \in \mathbb{R}^d.
    \end{array}
    \right.
\end{equation*}
They obtain in Theorem~$3.4$ the uniform-in-time propagation of chaos of the particle system to the parabolic-parabolic equation. It holds only for $\alpha > 0$ and their contraction condition (Assumption~$2.4$) blows up when $\alpha \to 0$.

Let us suppose for the sake of simplicity that $\beta = 1$. This model falls into our framework, with
\begin{align*}
    \sigma &:= 1, \quad F := \nabla V \\
    u_t(\mathrm{d} s) &:= \delta_0(\mathrm{d} s) + \un_{(0, t]}(\mathrm{d} s), \quad \quad \quad \quad \quad \quad \quad \quad \quad \quad \; \text{for all $t \geq 0$} \\
    L(t, s, x, \mu) &:= \begin{cases}
Q_t c_0(x), \quad \text{if $s = 0$} \\
Q_{t-s}\left( \int g(y - \cdot) \diff \mu(y) \right)(x), \quad \text{if $s > 0$},
\end{cases} \text{for all $t \geq 0$, $x \in \mathbb{R}^d$, and $\mu \in \mathcal{P}_1\left( \mathbb{R}^d \right)$}
\end{align*} with $g$, $c_0 \in \mathcal{C}_b^2\left( \mathbb{R}^d \right)$ and the associated semigroup
\begin{equation*}
    Q_t f := e^{- \gamma \alpha t} \nabla g_t \ast f := \frac{e^{- \gamma \alpha t}}{(2 \pi \gamma t)^{d/2}} \nabla \exp\left( - \frac{|\cdot|^2}{2 t \gamma}\right) \ast f.
\end{equation*}
Note that this example motivates the need for a non-homogeneous modelling of the memory.

\paragraph{Uniform-in-time propagation of chaos, case $\alpha > 0$.} The memory kernel $L$ satisfies Assumption~\ref{assumparticles}, with $p = 1$,
\begin{align*}
    h^1(t, s) := e^{- \gamma \alpha t} \| \mbox{Hess } c_0 \|_\infty \delta_{0, s} + e^{- \gamma \alpha (t - s)} \| \mbox{Hess } g \|_\infty \un_{s > 0},
\end{align*}
and
\begin{equation*}
    h^2(t, s) := e^{- \gamma \alpha (t - s)} \| \mbox{Hess } g \|_\infty \un_{s > 0}.
\end{equation*}
Let us detail the verification of the Lipschitz hypothesis in measure~\eqref{memory}. For all $t \geq 0$, $x \in \mathbb{R}^d$, $\mu \in \mathcal{P}_p\left( \mathbb{R}^d \right)$, $\nu \in \mathcal{P}_p\left( \mathbb{R}^d \right)$ and $s \neq 0$, and all $\pi$ a coupling between $\mu$ and $\nu$, we have the following computation
\begin{align*}
    | L(t, s, x, \mu) - L(t, s , x, \nu) | =& e^{- \gamma \alpha (t - s)} \left| \nabla g_{t - s} \ast g \ast ( \mu - \nu )(x) \right| \\
    \leq& e^{- \gamma \alpha (t - s)} \left| \int_y \int_{y'} g_{t - s} \ast \nabla g (x - y) \diff \pi(y, y') - \int_y \int_{y'} g_{t - s} \ast \nabla g (x - y') \diff \pi(y, y') \right| \\
    \leq& e^{- \gamma \alpha (t - s)} \| g_{t - s} \ast \mbox{Hess } g \|_\infty \int_{y, y'} | y - y' | \diff \pi(y, y').
\end{align*}
By optimization in the coupling $\pi$, and because $\| g_{t - s} \ast \mbox{Hess } g \|_\infty \leq \| \mbox{Hess } g \|_\infty$, it yields that 
\begin{equation*}
    | L(t, s, x, \mu) - L(t, s , x, \nu) | \leq h^2(t, s) W_1(\mu, \nu),
\end{equation*}
which proves the desired Lipschitz regularity~$(1.2)$ of Assumption~\ref{assumparticles}.

Additionally, we have for all $t \geq 0$
\begin{equation*}
    \int_0^t L(t, s, x, \mu) \, u_t(\mathrm{d} s) = e^{- \gamma \alpha t} g_t \ast \nabla c_0(x) + \int_0^t e^{- \gamma \alpha s} g_s \ast \nabla g \ast \mu(x) \diff s.
\end{equation*}
Therefore, $t \mapsto \int_0^t L(t, s, x, \mu) \, u_t(\mathrm{d} s)$ is continuous on $\mathbb{R}_+$ as required in~$(1.3)$ of Assumption~\ref{assumparticles}.

Let us now compute $H_t^1$ and $H_t^2$ defined in~\eqref{H_t}. We have
\begin{equation*}
    H_t^1 = e^{- \gamma \alpha t} \| \mbox{Hess } c_0 \|_\infty + \frac{1 - e^{- \gamma \alpha t}}{\gamma \alpha} \| \mbox{Hess } g \|_\infty \quad \text{and} \quad H_t^2 = \frac{1 - e^{- \gamma \alpha t}}{\gamma \alpha} \| \mbox{Hess } g \|_\infty.
\end{equation*}
This makes~$(1.3)$ of Assumption~\ref{assumparticles} satisfied. We also get that $\overline{\lim} H_t^1 +  \overline{\lim} H_t^2 = \frac{2}{\gamma \alpha} \| \mbox{Hess } g \|_\infty$. Plus, for all $s \geq 0$, we have \begin{equation*}
    \frac{\int_0^s h^2(t, u) u_t(\diff u)}{\int_0^t h^2(t, v) u_t(\diff v)} = \frac{\int_{t - s}^t e^{- \gamma \alpha u} \diff u}{\int_0^t e^{- \gamma \alpha v} \diff v} \xrightarrow[t \to + \infty]{} 0.
\end{equation*}
Therefore, thanks to our Theorem~\ref{unifpropachaos}, for all $\kappa > 2 \| \mbox{Hess } g \|_\infty / (\gamma \alpha)$, then we have the uniform-in-time propagation of chaos. This contraction condition is sharper than the one obtained in Theorem~$3.4$.

\paragraph{Uniform-in-time propagation of chaos, case $\alpha = 0$.}
The contraction condition in Assumption $2.4$ of~\cite{budhiraja2017uniformtimeinteractingparticle} blows up when $\alpha \to 0$. But we can obtain uniform-in-time propagation of chaos for $\alpha = 0$, that is to say even when the chemical signal does not vanish. Let us restrict the smoothing function $g$ to be the gaussian function $g_\varepsilon(x) := \exp\left( - \frac{|x|^2}{\gamma \alpha \varepsilon} \right) / \left( 2 \pi \gamma \varepsilon \right)^{d/2}$, and treat only the case $d = 1$ for computational simplicity. By choosing 
\begin{equation*}
    h^1(t, s) := \| \mbox{Hess } c_0 \|_\infty \delta_{0, s} + \| \mbox{Hess } g_\varepsilon \ast g_{t - s} \|_\infty \un_{s > 0} \quad \text{and} \quad h^2(t, s) := \| \mbox{Hess } g_\varepsilon \ast g_{t - s} \|_\infty \un_{s > 0}.
\end{equation*}
Also, we have
\begin{align*}
    \mbox{Hess } g_\varepsilon \ast g_{t - s}(x) = \mbox{Hess } g_{t - s + \varepsilon}(x) = \frac{1}{\sqrt{\pi} \gamma^{3/2} (t - s + \varepsilon)^{3/2}} \left( \frac{1}{\sqrt{2}} e^{- \frac{x^2}{2 (t - s + \varepsilon)
    \gamma}} + \sqrt{2} \frac{x^2}{2 (t - s + \varepsilon) \gamma} e^{- \frac{x^2}{2 (t - s + \varepsilon) \gamma}} \right),
\end{align*}
therefore 
\begin{equation*}
    \| \mbox{Hess } g_\varepsilon \ast g_{t - s}(x) \|_\infty = \frac{1}{\sqrt{\pi} \gamma^{3/2} (t - s + \varepsilon)^{3/2}} \left( \frac{1}{\sqrt{2}} + Cst \ \sqrt{2} \right),
\end{equation*}
where $Cst$ is the infinity norm of $|\cdot|^2 e^{- |\cdot|^2}$. Therefore we have
\begin{equation*}
    H_t^1 = \| \mbox{Hess } c_0 \|_\infty + \frac{1}{\sqrt{\pi} \gamma^{3/2}} \left( \frac{1}{\sqrt{2}} + Cst \ \sqrt{2} \right) \left( \frac{2}{\sqrt{\varepsilon}} - \frac{2}{\sqrt{t + \varepsilon}} \right)
\end{equation*}
\begin{equation*}
    H_t^2 = \frac{1}{\sqrt{\pi} \gamma^{3/2}} \left( \frac{1}{\sqrt{2}} + Cst \ \sqrt{2} \right) \left( \frac{2}{\sqrt{\varepsilon}} - \frac{2}{\sqrt{t + \varepsilon}} \right),
\end{equation*}
and recover uniform-in-time propagation of chaos for all
\begin{equation*}
    \kappa > \| \mbox{Hess } c_0 \|_\infty + \frac{4}{\sqrt{\pi \varepsilon} \gamma^{3/2}} \left( \frac{1}{\sqrt{2}} + Cst \ \sqrt{2} \right).
\end{equation*}
\begin{remark}
    Note that for small $\alpha$, this threshold is better than the one found in the previous paragraph, but depends on the initial condition.
\end{remark}

\paragraph{Application of our long-time results.} We restrict here to $\alpha > 0$ for simplicity. Suppose that $\kappa > 2 \| \mbox{Hess } g \|_\infty / (\gamma \alpha)$, so that Assumption~\ref{contraction} holds.

We have
\begin{equation*}
    \int_0^t |L(t, s, 0, \delta_0)| \ u_t(\mathrm{d} s) = e^{- \gamma \alpha t} g_t \ast |\nabla c_0|(0) + \int_0^t e^{- \gamma \alpha s} g_s \ast |\nabla g|(0) \diff s \leq \|\nabla c_0 \|_\infty + \frac{\|\nabla g \|_\infty}{\gamma \alpha},
\end{equation*}
so that $\sup_t \int_0^t |L(t, s, 0, \delta_0)| \ u_t(\mathrm{d} s) < \infty$. 

Additionally,
\begin{align*}
    \frac{\int_0^s h^2(t, u) u_t(\diff u)}{\int_0^t h^2(t, v) u_t(\diff v)} = \frac{e^{- \gamma \alpha ( t - s)} - e^{- \gamma \alpha t}}{1 - e^{- \gamma \alpha t}} \xrightarrow[t \to + \infty]{} 0,
\end{align*}
so that Assumption~\ref{memoryloss} holds as well.

Let us denote 
\begin{equation*}
    b(x, \mu) := \int_0^{+ \infty} e^{- \gamma \alpha s} g_s \ast \nabla g \ast \mu(x) \diff s.
\end{equation*}
By triangular inequality and then Hölder inequality for the convolutions, for all $\mu \in \mathcal{P}_p\left( \mathbb{R}^d \right)$, we have
\begin{align*}
    \int_{\mathbb{R}^d} \left| \int_0^t L(t, s, x, \mu) \ u_t(\mathrm{d} s) - b( x, \mu) \right| \ \mu(\mathrm{d} x) =& \int_{\mathbb{R}^d} \left| e^{- \gamma \alpha t} g_t \ast \nabla c_0 (x) - \int_t^{+ \infty} e^{- \gamma \alpha s} g_s \ast \nabla g \ast \mu(x) \diff s \right| \ \mu(\mathrm{d} x) \\
    \leq& e^{- \gamma \alpha t} \| \nabla c_0 \|_\infty + \int_t^{+ \infty} e^{- \gamma \alpha s} \| \nabla g \|_\infty \diff s \xrightarrow[t \to + \infty]{} 0.
\end{align*}
Therefore we obtain by Proposition~\ref{longtime} the long-time convergence towards a measure $\mu_\infty$. Additionally, if $e^{- V}$ is integrable, this measure should have a density $\rho_\infty$ which we define implicitly by a fixed point:
\begin{equation}
\label{rho_infty}
    \rho_\infty = \frac{1}{Z_{\rho_\infty}} \exp\left( - 2 V + 2 G \ast \rho_\infty \right), \quad Z_{\rho_\infty} := \int_{\mathbb{R}^d} \exp\left( - 2 V + 2 G \ast \rho_\infty \right),
\end{equation}
with 
\begin{equation*}
    G(x) := \int_0^{+ \infty} e^{- \gamma \alpha s} g_s \ast \nabla g (x) \diff s.
\end{equation*}
The quantity $Z_{\rho_\infty}$ is well defined because $\rho_\infty$ is a probability density so that
\begin{equation*}
    Z_{\rho_\infty} \leq e^{2 \| G \|_\infty} \int_{\mathbb{R}^d} e^{- 2 V} < \infty.
\end{equation*}

Let us prove that such fixed point indeed exists. We define the function
\begin{equation*}
    \Phi : \mu \mapsto \frac{1}{Z_\mu} \exp\left( - 2 V + 2 G \ast \mu \right)
\end{equation*}
on the non empty convex set
\begin{equation*}
    K := \left\{ \mu \in \mathcal{P}_1\left( \mathbb{R}^d \right), \, \mu(\mathrm{d} x) = \rho(x) \diff x, \quad 0 \leq \rho(x) \leq \frac{e^{4 \| G \|_\infty}}{\int_{\mathbb{R}^d} e^{- 2 V}} e^{- 2 V(x)} \right\}.
\end{equation*} 
The set $K$ is uniformly tight because for all $A \subset \mathbb{R}^d$ compact, and $\nu \in K$, $\nu(A^c) \leq \int_{A^c} \frac{e^{2 \| G \|_\infty}}{\int_{\mathbb{R}^d} e^{- 2 V}} e^{- 2 V(x)} \diff x$, which can be made arbitrarily small uniformly in $\nu$. Therefore $K$ is compact for the weak topology. 

We can also directly check that $\Phi\left( K \right) \subset K$ because for all $\mu \in K$
\begin{equation*}
    0 \leq \Phi(\mu)(x) \leq \frac{\exp\left( - 2 V(x) + 2 G \ast \mu(x) \right)}{\int_{\mathbb{R}^d} \exp\left( - 2 V(y) + 2 G \ast \mu(y) \right) \diff y} \leq e^{4 \| G \|_\infty} \frac{\exp\left( - 2 V(x) \right)}{\int_{\mathbb{R}^d} \exp\left( - 2 V(y) \right) \diff y}.
\end{equation*}

Moreover, let us prove that $\Phi$ is continuous. Let us take $(\mu_n)_n$ a weakly convergent sequence in $K$. For each $x \in \mathbb{R}^d$, we can easily prove that $y \mapsto G(x - y) \in \mathcal{C}_b\left( \mathbb{R}^d \right)$, so that $G \ast \mu_n (x) \xrightarrow[n \to + \infty]{} G \ast \mu (x)$. Plus, we have the uniform in $n$ bound
\begin{equation*}
    \left| \exp\left( - 2 V(x) + 2 G \ast \mu_n(x) \right) \right| \leq e^{2 \| G \|_\infty} e^{- 2 V(x)},
\end{equation*}
so that by dominated convergence we have $Z_{\mu_n} \xrightarrow[n \to + \infty]{} Z_\mu$. Similarly, for all test function $\phi \in \mathcal{C}_b\left( \mathbb{R}^d \right)$, we have
\begin{equation*}
    \int_{\mathbb{R}^d} \phi(x) \exp\left( - 2 V(x) + 2 G \ast \mu_n(x) \right) \diff x \xrightarrow[n \to + \infty]{} \int_{\mathbb{R}^d} \phi(x) \exp\left( - 2 V(x) + 2 G \ast \mu(x) \right) \diff x.
\end{equation*}
Therefore $\Phi(\mu_n) \xrightarrow[n \to + \infty]{} \Phi(\mu)$ weakly.

Let us prove now that $\rho_\infty$ actually is the equilibrium. By application of Schauder theorem, there exists a fixed point, so that $\rho_\infty$ defined by~\eqref{rho_infty} is well defined and is a candidate for the equilibrium.

By application of Proposition in the Markovian case, by taking $u_t = \delta_t$ and $L(t, s, x, \mu) = b(x, \mu)$, and by taking an optimal coupling for $\overline{X}_0$ and $\overline{Y}_0$, we get that
\begin{equation*}
    W_p^p\left( Law\left( \bar{X}_t \right), Law\left( \bar{Y}_t \right) \right) \leq m(t) W_p^p\left( Law\left( \bar{X}_0 \right), Law\left( \bar{Y}_0 \right) \right).
\end{equation*}
Therefore, for $t$ large enough, this means the function $\Phi_t$ that maps an initial law $\mu_0$ to $\mu_t$ the law of the solution to~\eqref{fixed} is a contraction for $W_p$. Therefore it has a unique fixed point that we will call $m_t$.
Since we are in the Markovian case, $\Phi$ is a semigroup, therefore for every $s \geq 0$
\begin{equation*}
    \Phi_s(m_t) = \Phi_s\left( \Phi_t(m_t) \right) = \Phi_{t + s}(m_t) = \Phi_t\left( \Phi_s(m_t) \right).
\end{equation*}
Therefore $\Phi_s(m_t)$ is a fixed point of $\Phi_t$ which has a unique fixed point, so $\Phi_s(m_t) = m_t$. By uniqueness of the invariant measure of~\eqref{fixed} and characterization of Proposition~\ref{coro}, then $\rho_\infty$ is indeed the density of the unique equilibrium.

\subsection{System of particles under a spike of stress}
\label{stressexample}
We now present an example inspired by time-dependent memory kernels that appear in models of aging viscoelastic solids (see~\cite{conti2016modelviscoelasticitytimedependentmemory}). Note that this example is not an application of their work, since the equations are not the same. This example illustrates Assumption~\ref{contraction}, by showing that large bursts of memory do not prevent uniform-in-time results, as long as the asymptotic memory weight is small enough.

Consider
\begin{equation*}
\diff X_t^{i,N} = \sigma \diff B_t^i - \nabla V(X_t^{i,N}) \diff t + \left[ \int_0^t \chi(t)e^{-\lambda(t-s)} \frac{1}{N} \sum_{j=1}^N F(X_t^{i,N}-X_s^{j,N}) \diff s \right] \diff t,
\end{equation*}
where $F:\mathbb R^d\to\mathbb R^d$ is globally Lipschitz, with Lipschitz constant $L_F$, and where
\begin{equation*}
\chi(t) = \chi_\infty + \chi_1 e^{-\beta t}, \qquad \chi_\infty, \, \chi_1, \, \text{and } \beta > 0.
\end{equation*}
This type of temporal modulation $\chi$ depending on the running time $t$ models a transitional regime with a loss of memory asymptotically in time. It can model a system of particles subject to an environmental stress that makes them more reactive to the past of the system, and then relaxes to an asymptotic regime of interaction with the past.
Let us take $u_t(ds) = \mathbf \un_{[0,t]}(s) \diff s$ and
\begin{equation*}
L(t, s, x, \mu) = \chi(t) e^{-\lambda(t-s)} F \ast \mu (x),
\end{equation*}
with $\lambda > 0$.
Then Assumption~\ref{assumparticles} is satisfied with
\begin{equation*}
    h^1(t, s) = h^2(t, s) = L_F \,\chi(t) e^{-\lambda(t-s)},
\end{equation*}
so that
\begin{equation*}
    H_t^1 = H_t^2 = \int_0^t h^1(t,s) \diff s = \frac{L_F}{\lambda} \chi(t) (1 - e^{-\lambda t}).
\end{equation*}
Therefore
\begin{equation*}
    \limsup_{t\to+\infty} H_t^1 = \limsup_{t\to+\infty} H_t^2 = \frac{L_F}{\lambda}\chi_\infty.
\end{equation*}
With $L_F = \chi_\infty = \chi_1 = \beta = 1$ and $\lambda = 3$, we have
\begin{equation*}
    H_{0.5}^1 \simeq 0.416 > \frac{2}{3} = \limsup_{t \geq 0} H_t^1.
\end{equation*}
So the condition Assumption~\ref{contraction} is strictly less demanding than having to assume 
\begin{equation*}
    \sup_{t \geq 0} \left\{ H_t^1 + H_t^2 \right\} < \kappa.
\end{equation*}
This motivates the choice of the more general contraction condition Assumption~\ref{contraction} compared to the condition $\sup_t \{ H_t^1 + H_t^2 \} < \kappa$. 
\begin{remark}
    In short, it means that bursts of memory are acceptable if the long-run memory weight is weak enough. Uniform-in-time estimates and long-time convergence results are more influenced by the asymptotic weight of the memory than by the supremum memory weight.
\end{remark}

\subsection{First impression model}
\label{seccounterexample}
Let us comment on Assumption~\ref{memoryloss}. Consider a case where 
    \begin{equation*}
        u_t(\mathrm{d} s) = \delta_0(\mathrm{d} s), \quad L(t, s, x, \mu) := \int y \diff \mu(y).
    \end{equation*}
    Assumption~\ref{assumparticles} is satisfied with $p = 1$, $h^1 \equiv 0$ and $h^2 \equiv 1$. Therefore we have for all $s > 0$ and $t \geq 0$
    \begin{equation*}
        \frac{\int_0^s h^2(t, v) \, u_t(\mathrm{d} v)}{\int_0^t h^2(t, w) \, u_t(\mathrm{d} w)} = \frac{h^2(t, 0)}{h^2(t, 0)} = 1,
    \end{equation*}
    and Assumption~\ref{memoryloss} is exactly not satisfied. Indeed, we have quite the opposite: all the memory weight is concentrated at the initial time. Consider then the particle system
    \begin{equation}
    \label{eq:XtiN}
        \diff X_t^{i, N} = -\kappa X_t^{i, N} \diff t + \frac{1}{N} \sum_{j = 1}^N X_0^{j, N} \diff t + \diff B_t^i, \quad i = 1, \dots, N,
    \end{equation}
    the initial conditions $X_0^{i, N}$ i.i.d., distributed according to the law $\mu_0 \in \mathcal{P}_2\left( \mathbb{R}^d \right)$ with a non-zero variance. The contraction condition of Assumption~\ref{contraction} is satisfied as soon as $\kappa>1$, so that uniform-in-time propagation of chaos is achieved, by Theorem~\ref{unifpropachaos}.
    Let us consider as well the limit system
    \begin{equation}
    \label{eq:XbarGauss}
        \diff \bar{X}_t^{i, N} =  -\kappa \bar{X}_t^{i, N} \diff t + \e[\bar{X}_0^{1, N}] \diff t + \diff B_t^i, \quad i = 1, \dots, N,
    \end{equation}
    driven by the exact same Brownian motions as the particles, and with $\left( \bar{X}_0^{1, N}, \dots, \bar{X}_0^{N, N} \right) \sim \mu_0^{\otimes N}$ independent from $(X_0^{i, N})_i$. 
    \begin{remark}
        The equation~\eqref{eq:XbarGauss} already shows that, for any initial distribution, $Law\left( \bar{X}_t^{i, N} \right)$ converges, when $t \to + \infty$, to a Gaussian law of mean $\e\left( \bar{X}_0 \right)/\kappa$. It makes the existence of an equilibrium fail, since we defined it to be independent of the initial distribution. This also shows that the contraction of Proposition~\ref{meanfieldlongtime} fails.
    \end{remark}

    Let us prove that in this case, there is no creation of chaos, in the sens introduced in Remark~\ref{rem:creationofchaos}.  If we take 
    \begin{equation*}
        (X_0^{1, N}, \dots, X_0^{N, N}) \sim \frac{1}{2} \delta_{(-1, \dots, -1)} + \frac{1}{2} \delta_{(1, \dots, 1)},
    \end{equation*}
    and $\bar{X}_0^i$ i.i.d. of law $\frac{1}{2} \delta_{- 1} + \frac{1}{2} \delta_1$, we do not even have
    \begin{equation*}
        \e\left( \left| \frac{1}{N} \sum_{i = 1}^N X_0^{i, N} - \e[\bar{X}_0^{1, N}] \right|^2 \right) \xrightarrow[N \to + \infty]{} 0,
    \end{equation*}
    so \textit{a fortiori} we do not have
    \begin{equation*}
        \frac{1}{N} \sum_{i = 1}^N \e\left( \left| X_0^{i, N} - \e[\bar{X}_0^{1, N}] \right|^2 \right) \xrightarrow[N \to + \infty]{} 0,
    \end{equation*}
    by Jensen inequality. In this case, there is no creation of chaos, since, for all $i = 1$ to $N$, conditionally to the initial condition, we have
    \begin{equation*}
        X_t^{i, N} = \left[ 1 + \frac{1 - e^{- \kappa t}}{\kappa} \right] X_0^{i, N} + \int_0^t e^{- \kappa (t - s)} \diff B_t^i,
    \end{equation*}
    with $\int_0^t e^{- \kappa (t - s)} \diff B_t^i$ being independent Brownian motions, so that
    \begin{equation*}
        (X_t^{1, N}, \dots, X_t^N) \sim \frac{1}{2} \mathcal{N} \left( m_t^-, \sigma_t \right)^{\otimes N} + \frac{1}{2} \mathcal{N} \left( m_t^+, \sigma_t \right)^{\otimes N},
    \end{equation*}
    with \begin{equation*}
        m_t^+ := 1 + \frac{1 - e^{- \kappa t}}{\kappa}, \quad m_t^- := - m_t^+, \quad \text{and } \sigma_t := \frac{1 - e^{- \kappa t}}{2 \kappa}.
    \end{equation*}
    Therefore we can compute the correlation function of $X_\cdot^{1, N}$ and $X_\cdot^{1, N}$
    \begin{equation*}
        \e\left[ X_t^{1, N} X_t^{2, N} \right] = \frac{1}{2} (m_t^+)^2 + \frac{1}{2} (m_t^-)^2 = \left( 1 + \frac{1 - e^{- \kappa t}}{\kappa} \right)^2 \xrightarrow[t \to + \infty]{} 1.
    \end{equation*}
    Therefore there is no creation of chaos.

\section{Propagation of chaos}
\label{secproofPoC}

Let us now prove that the propagation of chaos is achieved by adapting the calculations of~\cite{MALRIEU2001109}. We will first prove Theorem~\ref{bigtheorem}, and then use it to prove the propagation of chaos on a finite time horizon.

\subsection{Proof of Theorem~\ref{bigtheorem}}
Let us follow the approach of~\cite{theseMilica}, Chapter $2$, section $2.2$, with particles subject to an additional force $F$. Here, the adapted distance is not $W_2$ or the coarser (in expectation) distance $ \frac{1}{N} \sum_{i=1}^N |X_t^i - \bar{X}_t^i|^2$, but $\frac{1}{N} \sum_{i=1}^N |X_t^i - \bar{X}_t^i|^p$, to be compared with $W_p$, since $W_2$ is too coarse here to enable us to work properly with the $W_p$-Lipschitz memory kernel $L$. Let us now show that the propagation of chaos is achieved by adapting the calculations of~\cite{MALRIEU2001109}.
\begin{proof}
By differentiation we have
\begin{align}
\label{split}
\nonumber
    \frac{\diff}{\diff t} &\left[ \frac{1}{N} \sum_{i = 1}^N \frac{
    |X_t^i - \bar{X}_t^i|^p}{p}\right] 
    = - \frac{1}{N} \sum_{i = 1}^N |X_t^i - \bar{X}_t^i|^{p-2} \left( X_t^i - \bar{X}_t^i \right) \cdot \left( F(X_t^i) - F(\bar{X}_t^i) \right) \\
    \nonumber
    &+ \frac{1}{N} \sum_{i = 1}^N |X_t^i - \bar{X}_t^i|^{p-2} \left( X_t^i - \bar{X}_t^i \right) \cdot \left[ \int_0^t L\Huge(t, s, \bar{X}_t^i, \frac{1}{N} \sum_{j=1}^N \delta_{\bar{X}_s^j} \Huge) u_t(\mathrm{d} s) - \int_0^t L\Huge(t, s, \bar{X}_t^i, \mu_s \Huge) u_t(\mathrm{d} s) \right] \\
    \nonumber
    &+ \frac{1}{N} \sum_{i = 1}^N |X_t^i - \bar{X}_t^i|^{p-2} \left( X_t^i - \bar{X}_t^i \right) \cdot \left[ \int_0^t L\Huge(t, s, X_t^i, \frac{1}{N} \sum_{j = 1}^N \delta_{X_s^j}\Huge) u_t(\mathrm{d} s) - \int_0^t L\Huge(t, s, \bar{X}_t^i, \frac{1}{N} \sum_{j=1}^N \delta_{\bar{X}_s^j} \Huge) u_t(\mathrm{d} s) \right] \\
    &=: A_1 + A_2 + A_3.
\end{align}
The differentiation works for $p > 1$ because by parallel coupling $t \mapsto X_t^i - \bar{X}_t^i$ satisfies an ODE because the Brownian motions cancel out. The case $p = 1$ can be treated by a regularization trick we will not perform here. Refer to~\cite{FournierJourdain} for an example of this trick.

From now on the proof will be separated in five steps. The three first steps \textbf{Step 1.}, \textbf{Step 2.} and \textbf{Step~3.} will focus on each terms of~\eqref{split}. Then, \textbf{Step 4.} will put all the previous calculations together to get a Gronwall-type inequality. Finally, \textbf{Step 5.} will deduce the desired estimate.

\paragraph{Step 1. Let us start by treating the first term of~\eqref{split}.}
The first term can be treated easily thanks to the hypothesis on $F$. We get the following upper bound \begin{equation}
\label{A1}
    A_1 \leq - p \kappa \frac{1}{N} \sum_{i = 1}^N \frac{|X_t^i - \bar{X}_t^i|^p}{p}.
\end{equation}

\paragraph{Step 2. Let us treat the second term of~\eqref{split}.}
The second term can be treated in a similar manner to in~\cite{theseMilica}, by using successively Cauchy-Schwarz inequality and then the Lipschitz hypothesis~\eqref{memory} on the memory kernel $L$. We have
    \begin{align*}
        A_2 &=\frac{1}{N} \sum_{i = 1}^N |X_t^i - \bar{X}_t^i|^{p-2} \left( X_t^i - \bar{X}_t^i \right) \cdot \left[ \int_0^t L\Huge(t, s, \bar{X}_t^i, \frac{1}{N} \sum_{j=1}^N \delta_{\bar{X}_s^j} \Huge) u_t(\mathrm{d} s) - \int_0^t L\Huge(t, s, \bar{X}_t^i, \mu_s \Huge) u_t(\mathrm{d} s) \right] \\
        &\leq \frac{1}{N} \sum_{i=1}^N |X_t^i - \bar{X}_t^i|^{p-1} \cdot \left| \int_0^t L\Huge(t, s, \bar{X}_t^i, \frac{1}{N} \sum_{j=1}^N \delta_{\bar{X}_s^j} \Huge) u_t(\mathrm{d} s) - \int_0^t L\Huge(t, s, \bar{X}_t^i, \mu_s \Huge) u_t(\mathrm{d} s) \right| \\
        &\leq \frac{1}{N} \sum_{i=1}^N \int_0^t h^2(t, s) \, |X_t^i - \bar{X}_t^i|^{p-1} \, W_p\left(\frac{1}{N} \sum_{j=1}^N \delta_{\bar{X}_s^j}, \mu_s \right) \, u_t(\mathrm{d} s).
    \end{align*}
    Let us take the expectation and apply Young inequality with an arbitrary $\varepsilon > 0$ for $\frac{p - 1}{p} + \frac{1}{p} = 1$. Note that here, the introduction of the parameter $\varepsilon$ is overdone, since it is not needed to play with this parameter to conclude. For here, think of $\varepsilon$ as $1$, but these calculations will be helpful for the uniform-in-time proof of Theorem~\ref{unifpropachaos}. By taking the supremum in the second inequality, we get
    \begin{align}
    \label{temp}
    \nonumber
        \e(A_2) &\leq \varepsilon \frac{p - 1}{p} H_t^2 \cdot \frac{1}{N} \sum_{i=1}^N \e|X_t^i - \bar{X}_t^i|^p +  \frac{1}{p \varepsilon^{p - 1}} \int_0^t h^2(t, s) \, \e\left[ W_p^p\left( \frac{1}{N} \sum_{i=1}^N \delta_{\bar{X}_s^i} , \mu_s \right) \right] \, u_t(\mathrm{d} s) \\
        &\leq \varepsilon \frac{p - 1}{p} H_t^2 \cdot \frac{1}{N} \sum_{i=1}^N \e|X_t^i - \bar{X}_t^i|^p + \frac{H_t^2}{p \varepsilon^{p - 1}} \sup_{0 \leq s \leq T} \e\left[ W_p^p\left(\frac{1}{N} \sum_{i=1}^N \delta_{\bar{X}_s^i}, \mu_s \right) \right],
    \end{align} 
    Let us treat the term $\e\left[ W_p^p\left(\frac{1}{N} \sum_{i=1}^N \delta_{\bar{X}_s^i}, \mu_s \right) \right]$. The article~\cite{FournierGuillin} yields
    \begin{equation*}
        \e\left[ W_p^p\left( \frac{1}{N} \sum_{i=1}^N \delta_{\bar{X}_s^i}, \mu_s \right) \right] \leq v(N) \, \e(|\bar{X}_s|^q)^{p/q},
    \end{equation*}
    with $C$ a constant provided by the article depending only on $p$, $d$, and $q$, and $v(N)$ a speed function that decreases to $0$
    \begin{align*}
    v(N) := C &\times\left((N^{-1/2} + N^{-(q-p)/q}) \un_{p > d/2, q \neq 2 p} + (N^{-1/2} \log(1 + N) \right. \\
    &\left.+ N^{-(q-p)/q}) \un_{p = d/2, q \neq 2p} + (N^{-p/d} + N^{-(q-p)/q}) \un_{0< p < d/2, q \neq d/(d-p)}\right).
    \end{align*}
    Let us go back to~\eqref{temp}. We have
\begin{equation}
\label{A2}
    \e(A_2) \leq \varepsilon (p - 1) H_t^2 \; \frac{1}{N} \sum_{i=1}^N \frac{\e (|X_t^i - \bar{X}_t^i|^p)}{p} + v(N) \cdot \frac{H_t^2}{p \varepsilon^{p - 1}} \sup_{0 \leq s \leq T} \e(|\bar{X}_s^1|^q)^{p/q}.
\end{equation} 
Note that 
\begin{equation*}
    \sup_{0 \leq s \leq T} \e(|\bar{X}_s^1|^q)^{p/q} < \infty,
\end{equation*}
thanks to Proposition~\ref{wellposedness} (see the remark at the end of the proof).

\paragraph{Step 3. Let us treat the third term of~\eqref{split}.}    
    For the last term, let us use successively Cauchy-Schwarz inequality along with triangular inequality, and the hypotheses on $L$ in Assumption~\ref{assumparticles}
    \begin{align*}
        A_3 =\frac{1}{N} & \sum_{i=1}^N |X_t^i - \bar{X}_t^i|^{p-2} \left( X_t^i - \bar{X}_t^i \right) \cdot \left[ \int_0^t L\Huge(t, s, X_t^i, \frac{1}{N} \sum_{j = 1}^N \delta_{X_s^j}\Huge) u_t(\mathrm{d} s) - \int_0^t L\Huge(t, s, \bar{X}_t^i, \frac{1}{N} \sum_{j=1}^N \delta_{\bar{X}_s^j} \Huge) u_t(\mathrm{d} s) \right] \\
        \leq& \frac{1}{N} \sum_{i=1}^N |X_t^i - \bar{X}_t^i|^{p-1}\times \left( \left| \int_0^t L\Huge(t, s, X_t^i, \frac{1}{N} \sum_{j = 1}^N \delta_{X_s^j}\Huge) u_t(\mathrm{d} s) - \int_0^t L\Huge(t, s, \bar{X}_t^i, \frac{1}{N} \sum_{j = 1}^N \delta_{X_s^j}\Huge) u_t(\mathrm{d} s) \right| \right. \\
        &\left. + \left| \int_0^t L\Huge(t, s, \bar{X}_t^i, \frac{1}{N} \sum_{j = 1}^N \delta_{X_s^j}\Huge) u_t(\mathrm{d} s) - \int_0^t L\Huge(t, s, \bar{X}_t^i, \frac{1}{N} \sum_{j = 1}^N \delta_{\bar{X}_s^j}\Huge) u_t(\mathrm{d} s) \right| \right) \\
        \leq& \frac{1}{N} \sum_{i=1}^N |X_t^i - \bar{X}_t^i|^{p-1} \times \left( \int_0^t h^1(t, s) |X_t^i - \bar{X}_t^i| u_t(\mathrm{d} s) + \int_0^t h^2(t, s) W_p\left(\frac{1}{N} \sum_{j = 1}^N \delta_{X_s^j} \, , \, \frac{1}{N} \sum_{j = 1}^N \delta_{\bar{X}_s^j}\right) u_t(\mathrm{d} s) \right) \\
        \leq& \frac{1}{N} \sum_{i=1}^N \left(H_t^1 \, |X_t^i - \bar{X}_t^i|^p + \int_0^t h^2(t, s) W_p\left(\frac{1}{N} \sum_{j = 1}^N \delta_{X_s^j} \, , \, \frac{1}{N} \sum_{j = 1}^N \delta_{\bar{X}_s^j}\right) u_t(\mathrm{d} s) \cdot |X_t^i - \bar{X}_t^i|^{p-1} \right).
    \end{align*}
    Let us take the expectation. Young inequality with $\frac{p-1}{p} + \frac{1}{p} = 1$ and a parameter $\Tilde{\varepsilon}$ yields
    \begin{align}
    \label{A3}
    \nonumber
        \e(A_3) \leq& \frac{1}{N} \sum_{i=1}^N H_t^1 \e(|X_t^i - \bar{X}_t^i|^p) + \int_0^t h^2(t, s) \frac{1}{p \Tilde{\varepsilon}^{p - 1}} \e\left[W_p^p\left(\frac{1}{N} \sum_{j = 1}^N \delta_{X_s^j} \, , \, \frac{1}{N} \sum_{j = 1}^N \delta_{\bar{X}_s^j}\right)\right] \, u_t(\mathrm{d} s) \\
    \nonumber
        &+ \frac{(p-1)\Tilde{\varepsilon}}{p} \int_0^t h^2(t, s) \frac{1}{N} \sum_{i = 1}^N \e(|X_t^i - \bar{X}_t^i|^p) \, u_t(\mathrm{d} s) \\
    \nonumber
        \leq& \left( p H_t^1 + (p-1) \Tilde{\varepsilon} H_t^2 \right) \frac{1}{N} \sum_{i=1}^N \frac{\e(|X_t^i - \bar{X}_t^i|^p)}{p} \\
        \nonumber
        &+ \frac{1}{p \Tilde{\varepsilon}^{p - 1}} \int_0^t h^2(t, s) \e \left[ W_p^p\left(\frac{1}{N} \sum_{j = 1}^N \delta_{X_s^j} \, , \, \frac{1}{N} \sum_{j = 1}^N \delta_{\bar{X}_s^j}\right) \right] u_t(\mathrm{d} s) \\
        \leq& \left( p H_t^1 + (p-1) \Tilde{\varepsilon} H_t^2 \right) \frac{1}{N} \sum_{i=1}^N \frac{\e(|X_t^i - \bar{X}_t^i|^p)}{p} + \frac{1}{\Tilde{\varepsilon}^{p - 1}} \int_0^t h^2(t, s) \frac{1}{N} \sum_{i=1}^N \frac{\e(|X_s^i - \bar{X}_s^i|^p)}{p} \, u_t(\mathrm{d} s),
    \end{align} because $\e \left[ W_p^p\left(\frac{1}{N} \sum_{j = 1}^N \delta_{X_s^j} \, , \, \frac{1}{N} \sum_{j = 1}^N \delta_{\bar{X}_s^j}\right) \right] \leq \frac{1}{N} \sum_{i=1}^N \e(|X_s^i - \bar{X}_s^i|^p).$ Again, the parameter $\Tilde{\varepsilon}$ is useful only for the later proof of Theorem~\ref{unifpropachaos}, but for now, one can think of it as $\Tilde{\varepsilon} = 1$.

\paragraph{Step 4. Let us go back to~\eqref{split} and deduce a Gronwall-type estimate.}
Assemble~\eqref{A1},~\eqref{A2}, and~\eqref{A3}, and then rearrange them.
For all $0 \leq t \leq T$
\begin{align}
\label{estimatepropa}
\nonumber
    \frac{\diff}{\diff t} \left[ \frac{1}{N} \sum_{i = 1}^N \frac{
        \e(|X_t^i - \bar{X}_t^i|^p)}{p}\right] \leq& - p \kappa \frac{1}{N} \sum_{i = 1}^N \frac{\e(|X_t^i - \bar{X}_t^i|^p)}{p} \\
        \nonumber
        &+ \varepsilon (p - 1) H_t^2 \frac{1}{N} \sum_{i=1}^N \frac{\e (|X_t^i - \bar{X}_t^i|^p)}{p} + v(N) \frac{H_t^2}{p \varepsilon^{p - 1}} \sup_{0 \leq s \leq T} \e(|\bar{X}_s^1|^q)^{p/q} \\
        \nonumber
        &+ \left( p H_t^1 + (p-1) \Tilde{\varepsilon} H_t^2 \right) \frac{1}{N} \sum_{i=1}^N \frac{\e(|X_t^i - \bar{X}_t^i|^p)}{p} \\
        \nonumber
        &+ \frac{1}{\Tilde{\varepsilon}^{p - 1}} \int_0^t h^2(t, s) \frac{1}{N} \sum_{i=1}^N \frac{\e(|X_s^i - \bar{X}_s^i|^p)}{p} \, u_t(\mathrm{d} s) \\
        \nonumber
        \leq& \left( p (H_t^1 - \kappa) + H_t^2 (\Tilde{\varepsilon} + \varepsilon) (p-1) \right) \frac{1}{N} \sum_{i=1}^N \frac{\e(|X_t^i - \bar{X}_t^i|^p)}{p} \\
        \nonumber
        &+ \frac{1}{\Tilde{\varepsilon}^{p - 1}} \int_0^t h^2(t, s) \frac{1}{N} \sum_{i = 1}^N \frac{\e \left( |X_s^i -\bar{X}_s^i|^p\right)}{p} u_t(\mathrm{d} s) \\
        &+ v(N) \cdot \frac{H_t^2}{p \varepsilon^{p - 1}} \sup_{0 \leq s \leq T} \e(|\bar{X}_s^1|^q)^{p/q}.
\end{align}
Let us denote
\begin{equation}
\label{parameters}
    \alpha(t) := p (H_t^1 - \kappa) + H_t^2 (\varepsilon + \Tilde{\varepsilon}) (p - 1), \quad \beta(t, s) := \frac{1}{\Tilde{\varepsilon}^{p - 1}} h^2(t, s), \quad \text{and} \quad C(t) := \frac{H_t^2}{p \varepsilon^{p - 1}} \sup_{0 \leq t \leq T} \e(|\bar{X}_t^1|^q)^{p/q},
\end{equation}
and
\begin{equation*}
    e_t := \frac{1}{N} \sum_{i=1}^N \frac{\e|X_t^i - \bar{X}_t^i|^p}{p}.
\end{equation*}
Then we have \begin{equation*}
\left\{
\begin{array}{rcl}
    e_t' & \leq & \alpha(t) e_t + \displaystyle \int_0^t \beta(t, s) e_s \; u_t(\mathrm{d} s) + v(N) C(t) \\
    e_0 & = & \displaystyle \frac{1}{N} \sum_{i = 1}^N \frac{\e|X_0^i - \bar{X}_0^i|^p}{p}.
\end{array}
\right.
\end{equation*}

\paragraph{Step 5. Deduce an estimate for $e_t$.}
Consider $x_\eta : \mathbb{R}_+ \to \mathbb{R}_+$ solution to the system
\begin{equation*}
    \left\{ \begin{array}{rcl}
        x_\eta'(t) &=& \alpha(t) x_\eta(t) + \displaystyle \int_0^t \beta(t, s) x_\eta(s) \, u_t(\mathrm{d} s) + v(N) C(t), \quad \text{for all $0 \leq t \leq T$}\\
        x_\eta(0) &=& \displaystyle \frac{1}{N} \sum_{i=1}^N \frac{\e|X_0^i - \bar{X}_0^i|^p}{p} + \eta,
    \end{array}\right.
\end{equation*}
for all $\eta \geq 0$. The function $x_\eta$ is a candidate for an upper bound for $e_t$.

Let us denote $y_t := x_\eta(t) - e_t$. Then we have 
\begin{equation*}
    y_t' \leq \alpha(t) y_t + \int_0^t \beta(t, s) y_s \ u_t(\mathrm{d} s),
\end{equation*}
with $y_0 = \eta$. Therefore, by denoting $z_t := \exp\left( - \int_0^t \alpha(s) \diff s \right) y_t$ and $\Tilde{\beta}(t, s) = \frac{\exp\left( - \int_0^t \alpha(r) \diff r \right)}{\exp\left( - \int_0^s \alpha(r) \diff r \right)} \beta(t, s)$, we get
\begin{equation*}
    z_t' \geq \int_0^t \Tilde{\beta}(t, s) z_s \ u_t(\mathrm{d} s).
\end{equation*}
Let us remark that, for all $0 \leq s \leq T$, we have
\begin{equation*}
    \int_0^s \Tilde{\beta}(s, r) \ u_s(\mathrm{d} r) \leq \frac{1}{\Tilde{\varepsilon}^{p - 1}} \frac{\sup_{0 \leq t \leq T} \exp\left( - \int_0^t \alpha(r) \diff r \right)}{\inf_{0 \leq t \leq T} \exp\left( - \int_0^t \alpha(r) \diff r \right)} D_T =: \gamma,
\end{equation*}
so that, by integrating, we have, for all $0 \leq t \leq T$
\begin{align*}
    z_t &\geq \eta + \int_0^t \int_0^s \Tilde{\beta}(s, r) z_r \ u_s(\mathrm{d} r) \diff s \\
    &\geq \eta + \gamma T \min\left\{ 0, \inf_{0 \leq t \leq T} z_t \right\}.
\end{align*}
If $\min\left\{ 0, \inf_{0 \leq t \leq T} z_t \right\} = 0$, then we can already conclude that $e_t \leq x_\eta(t)$ for all $0 \leq t \leq T$. Suppose this is not the case, and denote $m := \inf_{0 \leq t \leq T} z_t$. Then 
\begin{equation*}
    m \geq \eta + \gamma T m. 
\end{equation*}
By taking $T$ small enough such that $1 - \gamma T > 0$, we get a contradiction.

To extend this local result to $[0, T]$ with fixed $T > 0$, we can use a subdivision of $[0, T]$ into sub intervals $[T_k, T_{k + 1}[$ of length strictly smaller than $1/\gamma$.
Suppose that $z_t \geq 0$ on $[0, T_k[$. Then for all $0 \leq l \leq k$, for all $T_l \leq t \leq T_{l + 1}$
\begin{align*}
    z_t &\geq \eta + \int_0^{T_l} \int_0^s \Tilde{\beta}(s, r) z_r \ u_s(\mathrm{d} r) \diff s + \int_{T_l}^t \int_0^s \Tilde{\beta}(s, r) z_r \ u_s(\mathrm{d} r) \diff s \\
    &\geq \eta + \inf_{0 \leq t \leq T_{k + 1}} z_t \ \gamma \left( T_{l + 1} - T_l \right),
\end{align*}
so that 
\begin{equation*}
    \inf_{0 \leq t \leq T_{k + 1}} z_t \geq \eta + \inf_{0 \leq t \leq T_{k + 1}} z_t \ \gamma \min_{0 \leq l \leq k} \left( T_{l + 1} - T_l \right).
\end{equation*}
Contradiction. We conclude that for all $0 \leq t \leq T$
\begin{equation*}
    e_t \leq x_\eta(t).
\end{equation*}
\bigbreak
Since this is true for all $\eta > 0$ arbitrarily small, we can make $\eta$ tend to $0$.

Let us prove that for all $t \geq 0$, $x_\eta(t) \xrightarrow[\eta \to 0]{} x_0(t)$. This follows from a stability argument. For all $\eta$, $\nu$, we have
\begin{align*}
    |x_\eta(t) - x_\nu(t)| &= \left| x_\eta(0) - x_\nu(0) + \alpha(t) \int_0^t (x_\eta(s) - x_\nu(s)) \diff s + \int_0^t \int_0^s \beta(s, v) (x_\eta(v) - x_\nu(v)) \; u_s(\mathrm{d} v) \diff s \right| \\
    &\leq |\eta - \nu| + \int_0^t \left( |\alpha(t)| + \int_0^s \beta(s, v) \; u_s(\mathrm{d} v) \right) \sup_{0 \leq v \leq s} |x_\eta(v) - x_\nu(v)| \diff s.
\end{align*}
By taking the supremum in the left hand side and then applying Gronwall, it gives 
\begin{equation*}
    \sup_{0 \leq t \leq T} |x_\eta(t) - x_\nu(t)| \leq |\eta - \nu| \exp\left( \int_0^T \left( |\alpha(t)| + \int_0^t \beta(t, s) \; u_t(\mathrm{d} s) \right) \diff t \right).
\end{equation*}
\textit{A fortiori} we have the simple convergence of $x_\eta$ to $x_0$, so that for all $0 \leq t \leq T$
\begin{equation*}
    e_t \leq x_0(t).
\end{equation*}
\end{proof}

\subsection{Proof of the propagation of chaos on a finite time horizon (Theorem~\ref{propachaos})}
For this proof, we won't need to use the variability of the parameters of $\varepsilon$ and $\Tilde{\varepsilon}$, so let us start from the study of $x : \mathbb{R}_+ \to \mathbb{R}_+$ defined in~\eqref{upperbound}, where we take $\varepsilon = 1$ and $\Tilde{\varepsilon} = 1$. The goal is then to use~\eqref{Gronwall} to get the desired result~\eqref{eqPoC}. For all $0 \leq s \leq t \leq \Tilde{T} \leq T$, we have
\begin{align*}
    x'(s) =& \left( p (H_s^1 - \kappa) + 2 H_s^2 (p-1) \right) x(s) + \int_0^s h^2(s, v) x(v) \; u_t(\mathrm{d} v) + \frac{v(N) H_s^2}{p} \sup_{0 \leq v \leq T} \e(|\bar{X}_v^1|^q)^{p/q} \\
    \leq& \left( p (D_T - \kappa)_+ + 2 D_T (p-1) \right) \sup_{0 \leq v \leq s} x(v) + D_T \sup_{0 \leq v \leq s} x(v) + \frac{v(N) D_T}{p} \sup_{0 \leq v \leq T} \e(|\bar{X}_v^1|^q)^{p/q},
\end{align*}
by using the definition of $D_T$ in~\eqref{D_T}.
Integrate from $0$ to $t \in [0, \Tilde{T}]$. We get
\begin{align*}
    x(t) \leq& \frac{1}{N} \sum_{i = 1}^N \frac{\e(|X_0^i - \bar{X}_0^i|^p)}{p} + \left( p (D_T - \kappa)_+ + 2 D_T (p-1) + D_T \right) \int_0^{\Tilde{T}} \sup_{0 \leq v \leq s} x(v) \diff s \\
    &+ v(N) \cdot \frac{T D_T}{p} \sup_{0 \leq v \leq T} \e(|\bar{X}_v^1|^q)^{p/q}.
\end{align*}
Then, if we take the supremum in time again for the left hand side, we get that, for all $\Tilde{T} \in [0, T]$, we have
\begin{align*}
    \sup_{0 \leq t \leq \Tilde{T}} x(t) \leq& \frac{1}{N} \sum_{i = 1}^N \frac{
        \e(|X_0^i - \bar{X}_0^i|^p)}{p} + \left( p (D_T - \kappa)_+ + 2 D_T (p-1) + D_T \right) \int_0^{\Tilde{T}} \sup_{0 \leq s \leq t} x(s) \diff t \\
        &+ v(N) \cdot \frac{T D_T}{p} \sup_{0 \leq t \leq T} \e(|\bar{X}_t^1|^q)^{p/q}.
\end{align*}
We end up with a classical Gronwall formulation, so that Gronwall's lemma yields
\begin{equation*}
        \sup_{0 \leq t \leq T} x(t) \leq C_1(p, \kappa, T) \left( \frac{1}{N} \sum_{i = 1}^N \frac{
        \e(|X_0^i - \bar{X}_0^i|^p)}{p} + v(N) \cdot \frac{T D_T}{p} \sup_{0 \leq t \leq T} \e(|\bar{X}_t^1|^q)^{p/q}\right),
\end{equation*}
where $C_1(p, \kappa, T) := \exp\left( p T (D_T - \kappa)_+ + 2 T D_T (p-1) + T D_T \right)$. Note that Proposition~\ref{wellposedness} yields
\begin{equation*}
    \sup_{0 \leq s \leq T} \e(|\bar{X}_s^1|^q)^{p/q} < \infty.
\end{equation*}
Therefore, we may denote $C_2(p, q, \kappa, T) := \frac{T D_T}{p} \sup_{0 \leq s \leq T} \e(|\bar{X}_s^1|^q)^{p/q}$, and end up with the desired result.

\section{Uniform-in-time results}
\label{secproofunif}
Let us prove the uniform-in-time results. We will start by proving the uniform-in-time boundedness of the moment of the limit equation, and then prove the uniform-in-time propagation of chaos.

\subsection{Proof of the uniform-in-time moment bound (Lemma~\ref{moments})}
The proof will be separated in five steps. In \textbf{Step 1.}, we will derive a moment estimate, where a special term remains to be treated. This is what we will do in \textbf{Step 2.}, leading to a more convenient estimate of the $q$-th moment. In \textbf{Step 3.} we use a variant of Gronwall's lemma that is suitable for the integro-differential estimate that we get in the previous step.

\paragraph{Step 1. Application of Itô formula.} 
Let us take $q$ such that $q \geq 2$ and $q > p$. By application of the Itô formula with the $\mathcal{C}^2$ function $x \mapsto |x|^q$, we get 
\begin{align*}
|\bar{X}_T|^q =& |\bar{X}_0|^q - \int_0^T q |\bar{X}_t|^{q-2} \bar{X}_t \cdot F(\bar{X}_t) \ \diff t + \int_0^T q |\bar{X}_t|^{q-2} \ \bar{X}_t \cdot \int_0^t L(t, s, \bar{X}_t, \mu_s) u_t(\mathrm{d} s) \ \diff t \\
&+ \int_0^T q |\bar{X}_t|^{q-2} \bar{X}_t \cdot \sigma \diff B_t + \frac{1}{2} \sum_{i, j = 1}^d \int_0^T \left[ q(q-2) |\bar{X}_t|^{q-4} \bar{X}_t^i \bar{X}_t^j + q |\bar{X}_t|^{q-2} \delta_{i, j}\right] \diff [\bar{X}_t^i, \bar{X}_t^j].
\end{align*}
Let us take the expectation of the previous equation. Since $T \mapsto \displaystyle \int_0^T q |\bar{X}_t|^{q-2} \ \bar{X}_t \cdot \sigma \diff B_t$ is a martingale thanks to Proposition~\ref{wellposedness}, then the expectation of this term vanishes
\begin{align*}
\e|\bar{X}_T|^q =& \e|\bar{X}_0|^q - \int_0^T q \e\left[ |\bar{X}_t|^{q-2} \bar{X}_t \cdot F(\bar{X}_t) \right] \ \diff t + \int_0^T q \left[ |\bar{X}_t|^{q-2} \ \bar{X}_t \cdot \int_0^t L(t, s, \bar{X}_t, \mu_s) u_t(\mathrm{d} s) \right] \ \diff t \\
&+ \frac{1}{2} \sum_{i, j = 1}^d (\sigma ^t \sigma)_{i, j} \int_0^T \left[ q(q-2) \e\left( |\bar{X}_t|^{q-4} \bar{X}_t^i \bar{X}_t^j \right) + q \e\left( |\bar{X}_t|^{q-2} \right) \delta_{i, j}\right] \diff t.
\end{align*}
Then, let us derive
\begin{align*}
\frac{\diff}{\diff t} \e\left(|\bar{X}_t|^q \right) =& - q \e\left[ |\bar{X}_t|^{q-2} \bar{X}_t \cdot F(\bar{X}_t) \right] + q \e\left[ |\bar{X}_t|^{q-2} \ \bar{X}_t \cdot \int_0^t L(t, s, \bar{X}_t, \mu_s) u_t(\mathrm{d} s) \right] \\
&+ \frac{1}{2} \sum_{i, j = 1}^d (\sigma ^t \sigma)_{i, j} \left[ q(q-2) \e\left(|\bar{X}_t|^{q-4} \bar{X}_t^i \bar{X}_t^j \right) + q \e\left(|\bar{X}_t|^{q-2}\right) \delta_{i, j} \right].
\end{align*}
Then, let us transform this formula in order to use the hypotheses on $F$ and $L$ in Assumption~\ref{assumparticles}
\begin{align*}
\frac{\diff}{\diff t} \e\left(|\bar{X}_t|^q \right) =& - q \e\left[ |\bar{X}_t|^{q-2} \left( \bar{X}_t - 0 \right) \cdot \left( F(\bar{X}_t) -   F(0) \right) \right] - q \e\left[ |\bar{X}_t|^{q-2} \ \bar{X}_t \cdot F(0) \right] \\
&+ q \e\left[ |\bar{X}_t|^{q-2} \ \left( \bar{X}_t - 0 \right) \cdot \int_0^t \left[ L(t, s, \bar{X}_t, \mu_s) - L(t, s, 0, \mu_s)\right] u_t(\mathrm{d} s) \right] \\
&+ q \e\left[ |\bar{X}_t|^{q-2} \ \bar{X}_t \cdot \int_0^t L(t, s, 0, \mu_s) u_t(\mathrm{d} s) \right] \\
&+ \frac{1}{2} \sum_{i, j = 1}^d (\sigma ^t \sigma)_{i, j}\left[ q(q-2) \e\left( |\bar{X}_t|^{q-4} \bar{X}_t^i \bar{X}_t^j \right) + q \left( |\bar{X}_t|^{q-2} \right) \delta_{i, j} \right] \\
\leq& - \kappa q \e\left( |\bar{X}_t|^q \right)  + q \ |F(0)| \ \e\left( |\bar{X}_t|^{q-1} \right) \\
&+ q \e\left[ |\bar{X}_t|^{q-2} |\bar{X}_t| \ \int_0^t h^1(t, s) \, |\bar{X}_t| \; u_t(\mathrm{d} s) \right] + q \e\left[ |\bar{X}_t|^{q-1} \left|\int_0^t L(t, s, 0, \mu_s) u_t(\mathrm{d} s)\right| \right] \\
&+ \frac{1}{2} \sum_{i, j = 1}^d (\sigma ^t \sigma)_{i, j}\left[ q(q-2) \e\left( |\bar{X}_t|^{q-4} \bar{X}_t^i \bar{X}_t^j \right) + q \e\left( |\bar{X}_t|^{q-2} \right) \delta_{i, j}\right],
\end{align*}
where the last inequality uses the Cauchy-Schwarz inequality. Let us use Young inequality with the parameter $\varepsilon > 0$ and with $\frac{1}{q} + \frac{q - 1}{q} = 1$ and the inequality $|\bar{X}_t^i| |\bar{X}_t^j| \leq \sum_{k, l = 1}^d |\bar{X}_t^k| |\bar{X}_t^l|$
\begin{align*}
    \frac{\diff}{\diff t} \e\left(|\bar{X}_t|^q \right) \leq& - \kappa q \e\left( |\bar{X}_t|^q \right)  + (q-1) \varepsilon \e\left(|\bar{X}_t|^q \right) + \frac{1}{\varepsilon^{q - 1}} |F(0)|^q \\
&+ q H_t^1 \e\left( |\bar{X}_t|^q \right) +  q \e\left( |\bar{X}_t|^{q - 1} \right) \left|\int_0^t L(t, s, 0, \mu_s) u_t(\mathrm{d} s)\right| \\
&+ \frac{1}{2} \sum_{i, j = 1}^d (\sigma ^t \sigma)_{i, j} \left[ q(q-2) \e\left(|\bar{X}_t|^{q-4} \sum_{k, l = 1}^d |\bar{X}_t^k| |\bar{X}_t^l|\right) + q \e\left( |\bar{X}_t|^{q-2} \right) \delta_{i, j}\right] \\
\leq& - \kappa q \e\left( |\bar{X}_t|^q \right)  + (q-1)\varepsilon \e\left( |\bar{X}_t|^q \right) + \frac{1}{\varepsilon^{q - 1}} |F(0)|^q \\
&+ q H_t^1 \e\left( |\bar{X}_t|^q \right) +  q \e\left( |\bar{X}_t|^{q - 1} \right) \left|\int_0^t L(t, s, 0, \mu_s) u_t(\mathrm{d} s)\right| \\
&+ \frac{1}{2} \sum_{i, j = 1}^d (\sigma ^t \sigma)_{i, j} \left[ d q (q-2) \e\left( |\bar{X}_t|^{q-2} \right) + q \e\left( |\bar{X}_t|^{q-2} \right) \delta_{i, j}\right],
\end{align*}
where in the last inequality we used the Cauchy-Schwarz inequality $\sum_{k, l \leq d} |\bar{X}_t^k||\bar{X}_t^l| \leq d \cdot \sum_k |\bar{X}_t^k|^2$. Then let use Young inequality twice with $\frac{q-2}{q} + \frac{2}{q} = 1$
\begin{align}
\label{Eq:EstimMoment}
\nonumber
    \frac{\diff}{\diff t} \e\left(|\bar{X}_t|^q \right) \leq& - \kappa q \e\left( |\bar{X}_t|^q \right)  + (q-1) \varepsilon \e\left( |\bar{X}_t|^q \right) + \frac{1}{\varepsilon^{q - 1}} |F(0)|^q \\
    \nonumber
&+ q H_t^1 \e\left( |\bar{X}_t|^q \right) +  q \e\left( |\bar{X}_t|^{q - 1} \right) \left|\int_0^t L(t, s, 0, \mu_s) u_t(\mathrm{d} s)\right| \\
\nonumber
&+ \frac{1}{2} \sum_{i, j = 1}^d (\sigma ^t \sigma)_{i, j} \ d q (q-2) \e\left( |\bar{X}_t|^{q-2} \right) + \frac{q}{2} \tr\left(\sigma ^t \sigma\right) \e\left( |\bar{X}_t|^{q-2} \right) \\
\nonumber
\leq& - \kappa q \e\left( |\bar{X}_t|^q \right)  + (q-1) \varepsilon \e\left( |\bar{X}_t|^q \right) + \frac{1}{\varepsilon^{q - 1}} |F(0)|^q \\
\nonumber
&+ q H_t^1 \e\left( |\bar{X}_t|^q \right) +  q \e\left( |\bar{X}_t|^{q - 1} \right) \left|\int_0^t L(t, s, 0, \mu_s) u_t(\mathrm{d} s)\right| \\
\nonumber
&+ \frac{\varepsilon}{2} \left[ d (q-2)^2 \sum_{i, j = 1}^d (\sigma ^t \sigma)_{i, j} + (q - 2) \tr\left(\sigma ^t \sigma\right) \right] \e\left( |\bar{X}_t|^q \right) \\
&+ d (q-2) \frac{1}{\varepsilon^{q/2 - 1}} \sum_{i, j = 1}^d (\sigma ^t \sigma)_{i, j} + \frac{1}{\varepsilon^{q/2 - 1}} \tr\left(\sigma ^t \sigma\right).
\end{align}
\paragraph{Step 2. We treat the term $\e\left( |\bar{X}_t|^{q - 1} \right) \left|\int_0^t L(t, s, 0, \mu_s) u_t(\mathrm{d} s)\right|$.} 
By triangular inequality and Lipschitz hypothesis in $\mu$ of $L$, we get \begin{align*}
    |L(t, s, 0, \mu_s)| \leq& |L(t, s, 0, \delta_0)| + |L(t, s, 0, \mu_s) - L(t, s, 0, \delta_0)| \\
    \leq& |L(t, s, 0, \delta_0)| + h^2(t, s) \e(|\bar{X}_s|^q)^{1/q},
\end{align*} since $W_p(\delta_0, \mu_s) \leq \e(|\bar{X}_s|^p)^{1/p} \leq \e(|\bar{X}_s^q|)^{1/q}$. Therefore we have
\begin{align*}
    \e\left( |\bar{X}_t|^{q - 1} \right) & \left|\int_0^t L(t, s, 0, \mu_s) u_t(\mathrm{d} s)\right| \\
    \leq& \e\left( |\bar{X}_t|^{q - 1} \right) \left[ \int_0^t |L(t, s, 0, \delta_0)| \ u_t(\mathrm{d} s) + \int_0^t |L(t, s, 0, \mu_s) - L(t, s, 0, \delta_0)| \ u_t(\mathrm{d} s) \right] \\
    \leq& \varepsilon \frac{q - 1}{q} \e\left( |\bar{X}_t|^q \right) + \frac{1}{q \varepsilon^{q - 1}} \left( \int_0^t |L(t, s, 0, \delta_0)| \ u_t(\mathrm{d} s) \right)^q + \e\left( |\bar{X}_t|^{q - 1} \right) \int_0^t h^2(t, s) \e\left( |\bar{X}_s|^q \right)^{1/q} \ u_t(\mathrm{d} s),
\end{align*}
by Young inequality with $\frac{q - 1}{q} + \frac{1}{q} = 1$ with the parameter $\varepsilon > 0$. Then by Young inequality with $\frac{q - 1}{q} + \frac{1}{q} = 1$ and parameter $\alpha > 0$ and then by Jensen inequality, we get
\begin{align}
\label{Eq:term}
\nonumber
    \e\left( |\bar{X}_t|^{q - 1} \right) & \left|\int_0^t L(t, s, 0, \mu_s) u_t(\mathrm{d} s)\right| \\
    \nonumber
    \leq& \varepsilon \frac{q - 1}{q} \e\left( |\bar{X}_t|^q \right) + \frac{1}{q \varepsilon^{q - 1}} \left( \int_0^t |L(t, s, 0, \delta_0)| \ u_t(\mathrm{d} s) \right)^q \\
    \nonumber
    &+ \alpha \frac{q - 1}{q} H_t^2 \e\left( |\bar{X}_t|^q \right) + \frac{1}{q \alpha^{q - 1}} H_t^2 \left( \int_0^t \frac{h^2(t, s)}{\int_0^t h^2(t, s) \ u_t(\mathrm{d} s)} \e\left( |\bar{X}_s|^q \right)^{1/q} \ u_t(\mathrm{d} s) \right)^q \\
    \nonumber
    \leq& \varepsilon \frac{q - 1}{q} \e\left( |\bar{X}_t|^q \right) + \frac{1}{q \varepsilon^{q - 1}} \left( \int_0^t |L(t, s, 0, \delta_0)| \ u_t(\mathrm{d} s) \right)^q \\
    &+ \alpha \frac{q - 1}{q} H_t^2 \e\left( |\bar{X}_t|^q \right) + \frac{1}{q \alpha^{q - 1}} \int_0^t h^2(t, s) \e\left( |\bar{X}_s|^q \right) \ u_t(\mathrm{d} s).
\end{align}
Therefore we have, by plugging~\eqref{Eq:term} into~\eqref{Eq:EstimMoment} and reorganizing the terms
\begin{align}
\label{Eq:estimatemoment}
    \frac{\diff}{\diff t} \e\left(|\bar{X}_t|^q \right)
\leq& K_1(t) - K_2(t) \e\left( |\bar{X}_t|^q \right) + \frac{1}{\alpha^{q - 1}} \int_0^t h^2(t, s) \e\left( |\bar{X}_s|^q \right) \ u_t(\mathrm{d} s),
\end{align}
with
\begin{equation*}
    K_1(t) := \frac{1}{\varepsilon^{q - 1}} \left( \int_0^t |L(t, s, 0, \delta_0)| \ u_t(\mathrm{d} s) \right)^q + d (q-2) \frac{1}{\varepsilon^{q/2 - 1}} \sum_{i, j = 1}^d (\sigma ^t \sigma)_{i, j} + \frac{1}{\varepsilon^{q/2 - 1}} \tr\left(\sigma ^t \sigma\right) + \frac{1}{\varepsilon^{q - 1}} |F(0)|^q \leq K_1
\end{equation*}
and
\begin{equation*}
    K_2(t) := \kappa q - q H_t^1  - \alpha (q - 1) H_t^2 - 2 \varepsilon (q-1) - \frac{d}{2} \varepsilon (q-2)^2 \sum_{i, j = 1}^d (\sigma ^t \sigma)_{i, j} - \varepsilon (q - 2) \frac{\tr\left(\sigma ^t \sigma\right)}{2},
\end{equation*}
since $\sup_{t \geq 0} \int_0^t |L(t, s, 0, \delta_0)| \ u_t(\mathrm{d} s) < \infty$.
Let us fix $D^1 > \overline{D}^1$ and $D^2 > \overline{D}^2$ such that $\kappa > D^1 + D^2$. There exists $t_0$ such that for all $t \geq t_0$, we have
\begin{equation}
\label{asympt}
    H_t^1 \leq D^1 \quad \text{and} \quad H_t^2 \leq D^2.
\end{equation}
Then for all $t \geq t_0$, by plugging~\eqref{asympt} into~\eqref{Eq:estimatemoment}, and using Proposition~\ref{wellposedness}, we have
\begin{align}
\frac{\diff}{\diff t} \e\left(|\bar{X}_t|^q \right)
\leq& K_1 + \frac{D^2}{\alpha^{q - 1}} \sup_{0 \leq s \leq t_0} \e\left( |\bar{X}_s|^q \right) - K_2 \e\left( |\bar{X}_t|^q \right) + \frac{D^2}{\alpha^{q - 1}} \int_{t_0}^t \frac{h^2(t, s)}{\int_0^t h^2(t, s) \ u_t(\mathrm{d} s)} \e\left( |\bar{X}_s|^q \right) \ u_t(\mathrm{d} s),
\end{align}
where
\begin{align*}
    K_2 := q \left( \kappa - D^1 \right) - \alpha (q - 1) D^2 - 2 \varepsilon (q - 1) - \frac{\varepsilon}{2} \left[ d (q - 2)^2 \sum_{i, j = 1}^d (\sigma ^t \sigma)_{i, j} + (q - 2) \tr\left( \sigma ^t \sigma \right) \right].
\end{align*}

\paragraph{Step 3. Constant optimization and application of a generalized integro-differential Gronwall's lemma.}
For the proof of Lemma~\ref{moments}, we will need the following lemma which is a Gronwall-type lemma for integro-differential inequations and is inspired by Lemma~$2.4$ of~\cite{deraynal2021reducingexittimesdiffusionsrepulsive}.
\begin{lemma}
\label{Lem:boundedgronwall}
    Let $0 \leq b < a$, and $c \in \mathbb{R}$. If for all $t \geq t_0$
    \begin{equation*}
        x'(t) \leq - a x(t) + b \int_{t_0}^t \frac{h^2(t, s)}{\int_0^t h^2(t, v) \; u_t(\mathrm{d} v)} x(s) \; u_t(\mathrm{d} s) + c,
    \end{equation*}
    then we have 
    \begin{equation*}
        x(t) \leq \frac{c}{a - b} + \left( x(t_0) - \frac{c}{a - b} \right)_+.
    \end{equation*}
\end{lemma}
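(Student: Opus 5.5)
The plan is a barrier (first-crossing) argument, in the spirit of Lemma~$2.4$ of~\cite{deraynal2021reducingexittimesdiffusionsrepulsive}. Write $M := \frac{c}{a-b} + \left( x(t_0) - \frac{c}{a-b} \right)_+ = \max\left( x(t_0), \frac{c}{a-b} \right)$. Fix $\eta > 0$; I will show $x(t) < M + \eta$ for all $t \geq t_0$, and then let $\eta \to 0$.

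First I record the structural fact that makes the memory term harmless. The weights
\begin{equation*}
w_t(\mathrm{d} s) := \frac{h^2(t, s)}{\int_0^t h^2(t, v) \, u_t(\mathrm{d} v)} \, u_t(\mathrm{d} s)
\end{equation*}
are non-negative, and $w_t([t_0, t]) \leq w_t([0,t]) = 1$. So the integral term is a sub-average of past values of $x$. I assume $x$ is differentiable, hence continuous, on $[t_0, \infty)$, as in the application to $t \mapsto \e(|\bar{X}_t|^q)$. I also work in the case $M \geq 0$, which holds whenever $c \geq 0$ or $x \geq 0$. Both are true in the intended use: $K_1 \geq 0$, and $x$ is a moment. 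This is needed because a sub-probability weight can only be bounded by its full mass when it multiplies a non-negative level.

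Next I define $\tau := \inf\{ t \geq t_0 : x(t) > M + \eta \}$ and suppose $\tau < \infty$. Since $x(t_0) \leq M < M + \eta$, continuity gives $\tau > t_0$, $x(\tau) = M + \eta$, and $x(s) \leq M + \eta$ for all $s \in [t_0, \tau]$. Plugging into the hypothesis at $t = \tau$ and using $M + \eta \geq 0$ with $w_\tau([t_0,\tau]) \leq 1$ gives
\begin{equation*}
x'(\tau) \leq -a(M+\eta) + b (M + \eta) + c = -(a-b)(M + \eta) + c \leq -(a-b)\eta < 0,
\end{equation*}
where the last step uses $(a-b)M \geq c$, which holds by definition of $M$. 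A strictly negative derivative at $\tau$ forces $x(t) < x(\tau) = M + \eta$ on some interval $(\tau, \tau + \delta)$. This contradicts the definition of $\tau$ as an infimum of times where $x$ exceeds $M + \eta$. Hence $\tau = \infty$, so $x \leq M + \eta$ on $[t_0, \infty)$ for every $\eta > 0$, which yields the claim.

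The main obstacle is the sign issue in the memory term. The bound $b \int_{t_0}^\tau x(s) \, w_\tau(\mathrm{d} s) \leq b(M + \eta)$ needs the level $M + \eta$ to be non-negative, because the truncated weights only have mass at most $1$. I will handle it via the non-negativity of $x$ or $c$ available in the application. A cleaner alternative, if $x \geq 0$, is to bound the integral by $b \sup_{[t_0,\tau]} x$ directly. The remaining point is the strictness needed for the first-crossing contradiction, and it is exactly what the $\eta$-perturbation provides.
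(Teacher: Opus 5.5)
Your proof is correct and is essentially the paper's own first-crossing barrier argument: the paper takes a level $B>\frac{c}{a-b}$ and shows $x'(t^\ast)\le -(a-b)\bigl(B-\frac{c}{a-b}\bigr)<0$ at the first exit time. Placing the barrier at $\max\bigl(x(t_0),\frac{c}{a-b}\bigr)+\eta$ handles both cases at once, which is cleaner than the paper's separate step via $g$.

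Your restriction to $M\ge 0$ is a genuine correction rather than a gap. The paper's bound $x'(t^\ast)\le -aB+bB+c$ silently needs $B\ge 0$, because the weights on $[t_0,t]$ have total mass at most $1$. Indeed, take $u_t=\delta_0$, $h^2\equiv 1$, $t_0>0$, $a=2$, $b=1$, $c=-1$: the function $x(t)=-\frac12-\frac12 e^{-2(t-t_0)}$ satisfies the hypothesis with equality but exceeds $\frac{c}{a-b}=-1=x(t_0)$. In the application $x\ge 0$ and $c\ge 0$, so nothing is lost.
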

\begin{proof}
Denote $A := \frac{c}{a - b}$.

First, let us show that if $x(t_0) \leq A$, then for all $t \geq t_0$, $x(t) \leq A$. Let us take $B > A$. Suppose $x(t_0) \leq B$. Denote $t^\ast := \inf\{ s \geq t_0, \; x(s) > B \}$. Suppose by contradiction that $t^\ast < \infty$.

By continuity, $x(t^\ast) = B$, and for all $t_0 \leq s \leq t^\ast$, $x(s) \leq B$. Therefore
\begin{equation*}
    x'(t^\ast) \leq - a B + b B + c = - (a - b)(B  - A) < 0.
\end{equation*}
So there exists $\gamma > 0$ such that for all $u \in [0, 1]$, $x(t^\ast + u \gamma) \leq B$. Contradiction.

So $x(t) \leq B$, for all $t \geq t_0$. Since this is true for all $B > A$, we get that if $x(t_0) \leq A$, then $x(t) \leq A$ for all $t \geq t_0$.
\bigbreak

Then, let us prove the result if $x(t_0) > A$. Denote $g(t) := \frac{x(t) - A}{x(t_0) - A}$. This function is well defined thanks to the previous case, and it satisfies $g(t) \leq 1$ for all $t \geq t_0$ as well, so
\begin{equation*}
    x(t) \leq A + \left( x(t_0) - A \right). 
\end{equation*}
\end{proof}
In order to apply Lemma~\ref{Lem:boundedgronwall}, we need the inequality $K_2 > D^2 / \alpha^{q - 1}$ to be satisfied, that is to say
\begin{equation*}
    q\left( \kappa - \left( \frac{q - 1}{q} \alpha + \frac{1}{\alpha^{q - 1} q} \right) D^2 - D^1 \right) - 2 \varepsilon (q - 1) - \frac{d}{2} \varepsilon (q - 2)^2 \sum_{i, j = 1}^d (\sigma ^t \sigma)_{i, j} - \varepsilon (q - 2) \frac{\tr\left(\sigma ^t \sigma \right)}{2} > 0
\end{equation*}
Let us first take $\alpha := 1$ that minimizes the function $\alpha \in \mathbb{R}_+^* \mapsto (q-1) \alpha + 1/\alpha^{q - 1}$ and therefore optimizes our bound. We end up with the condition 
\begin{equation*}
    q\left( \kappa - D^1 - D^2 \right) - 2 \varepsilon (q - 1) - \frac{d}{2} \varepsilon q (q - 2)^2 \sum_{i, j = 1}^d (\sigma^2)_{i, j} - \varepsilon q (q - 2) \frac{\tr(\sigma^2)}{2} > 0.
\end{equation*}
Provided $\varepsilon$ is small enough, the condition is satisfied.
By applying Lemma~\ref{Lem:boundedgronwall} with $a := K_2$, $b := D^2 / \alpha^{q - 1}$, $c := K_1 + \frac{D^2}{\alpha^{q - 1}} \sup_{0 \leq s \leq t_0} \e\left( |\bar{X}_s|^q \right)$, and $x(t) := \e\left( |\bar{X}_t|^q \right)$. Assuming $\bar{X}_0 \in \mathcal{P}_q\left( \mathbb{R}^d \right)$, we get that
\begin{equation*}
    \sup_{t \geq t_0} \e|\bar{X}_t|^q < \infty.
\end{equation*}
Since Proposition~\ref{wellposedness} yields
\begin{equation*}
    \sup_{0 \leq t \leq t_0} \e|\bar{X}_t|^q < \infty,
\end{equation*}
this concludes the proof.

\subsection{Proof of the uniform-in-time propagation of chaos (Theorem~\ref{unifpropachaos})} 
Thanks to the contraction hypothesis $\kappa > \overline{D}^1 + \overline{D}^2$, we can choose parameters $\varepsilon$ and $\Tilde{\varepsilon}$ that allow the contraction to happen in the Gronwall-type estimate and get a uniform-in-time estimate.
\begin{proof}
Let us fix $D^1 > \overline{D}^1$ and $D^2 > \overline{D}^2$ such that $\kappa > D^1 + D^2$. There exists $t_0$ such that for all $t \geq t_0$, we have
\begin{equation}
\label{asym}
    H_t^1 \leq D^1 \quad \text{and} \quad H_t^2 \leq D^2.
\end{equation}
Define for all $T \geq t_0$
\begin{equation}
\label{x}
    \left\{ \begin{array}{rcl}
        x'(t) &=& - \left( p (\kappa - H_t^1) - H_t^2 (\varepsilon + \Tilde{\varepsilon}) (p - 1) \right) x(t) + \displaystyle \frac{1}{\Tilde{\varepsilon}^{p - 1}} \int_0^t h^2(t, s) x(s) \, u_t(\mathrm{d} s) + \frac{H_t^2 v(N)}{p \varepsilon^{p - 1}} \sup_{t_0 \leq s \leq T} \e(|\bar{X}_s|^q)^{p/q} \\
        x(t_0) &=& \displaystyle \frac{1}{N} \sum_{i=1}^N \frac{\e|X_{t_0}^i - \bar{X}_{t_0}^i|^p}{p}, \quad \text{for all $t_0 \leq t \leq T$}.
    \end{array}\right.
\end{equation}
It is slightly different from~\eqref{upperbound}, but the only difference is that we start from $t_0$. Note that $x$ is such that for all $t \geq t_0$
\begin{equation*}
    \frac{1}{N} \sum_{i = 1}^N \frac{\e(|X_t^i - \bar{X}_t^i|^p)}{p} \leq x(t),
\end{equation*}
thanks to Theorem~\ref{bigtheorem}.
By using Lemma~\ref{moments} and~\eqref{asym}, we get that for all $t \geq t_0$, we have
\begin{equation*}
    x'(t) \leq - \left( p (\kappa - D^1) - D^2 (\varepsilon + \Tilde{\varepsilon}) (p - 1) \right) x(t) + \frac{D^2}{\Tilde{\varepsilon}^{p - 1}} \int_0^t \frac{h^2(t, s)}{\int_0^t h^2(t, s) \ u_t(\mathrm{d} s)} x(s) \; u_t(\mathrm{d} s) + \frac{D^2 v(N)}{p \varepsilon^{p - 1}} M_{p, q},
\end{equation*}
where $M_{p, q}$ is defined in~\eqref{momentuniform}.
In order to apply Lemma~\ref{Lem:boundedgronwall}, we want the condition
\begin{equation*}
    p \left( \kappa - D^1 \right) - D^2 (\Tilde{\varepsilon} + \varepsilon) (p-1) > \frac{D^2}{\Tilde{\varepsilon}^{p - 1}}
\end{equation*}
to be satisfied. It is enough to take $\varepsilon$ small if 
\begin{equation*}
   p \left( \kappa - D^1 \right) - \Tilde{\varepsilon} D^2 (p-1) > \frac{D^2}{\Tilde{\varepsilon}^{p - 1}}
\end{equation*}
is satisfied. As in the previous proof, the function $\Tilde{\varepsilon} \mapsto \Tilde{\varepsilon} (p - 1) + 1/\Tilde{\varepsilon}^{p - 1}$ is minimal at $\Tilde{\varepsilon} := 1$, so that the condition boils down to $\kappa > D^1 + D^2$, which we supposed.

By applying Lemma~\ref{Lem:boundedgronwall} with all of the choices of parameters that make the assumption satisfied, we get that for all $t \geq t_0$
\begin{equation*}
    \frac{1}{N} \sum_{i=1}^N \e( |X_t^i - \bar{X}_t^i|^p) \leq C(p, q, \kappa, D^1, D^2, \varepsilon) v(N) + \left( \frac{1}{N} \sum_{i=1}^N \e( |X_{t_0}^i - \bar{X}_{t_0}^i|^p) - C(p, q, \kappa, D^1, D^2, \varepsilon) v(N)\right)_+,
\end{equation*}
with 
\begin{equation*}
    C(p, q, \kappa, D^1, D^2, \varepsilon) := \frac{D^2 M_{p, q}}{p \varepsilon^{p - 1} \left( p \left( \kappa - D^1 - D^2\right) - \varepsilon D^2 (p - 1) \right)}.
\end{equation*}
But then, by plugging in~\eqref{eqPoC}, we get the wanted estimate for all $t \geq t_0$. We get a similar estimate for all $0 \leq t \leq t_0$, so this concludes.
\end{proof}

\section{Equilibrium and long-time convergence}
\label{secproofequilibrium}
Let us prove the long-time behaviour results, by starting with the proof of the long-time contraction for the mean-field model, then the characterization of the equilibrium for the stationary effective memory case as well as the existence of a unique stationary solution in this case, the result on the existence of an equilibrium in the case of asymptotic-in-time stationary effective memory, and finally the result on exchangeability of the limits.

\subsection{Proof of the long-time contraction for the mean-field model (Proposition~\ref{meanfieldlongtime}).} 
Let $X_t^{i, N}$, for $i = 1, \dots, N$ be solutions to the system~\eqref{particles}, with i.i.d. initial conditions $\bar{X}_0^i$, driven by $(B_t^i)_{1 \leq i \leq N}$ $N$ independent Brownian motions. Let $\bar{Y}_t^i$, for $i = 1, \dots, N$ be $N$ independent solutions to~\eqref{limit}, each driven by $B_t^i$ the exact same Brownian motion as for $X_t^i$, and with i.i.d. initial conditions $\bar{Y}_0^i$.

Let us fix $D^1 > \overline{D}^1$ and $D^2 > \overline{D}^2$ such that $\kappa > D^1 + D^2$. There exists $t_0$ such that for all $t \geq t_0$, we have
\begin{equation}
\label{asym_}
    H_t^1 \leq D^1 \quad \text{and} \quad H_t^2 \leq D^2.
\end{equation}
Define for all $T \geq t_0$
\begin{equation}
\label{x_}
    \left\{ \begin{array}{rcl}
        x'(t) &=& - \left( p (\kappa - H_t^1) - H_t^2 (\varepsilon + \Tilde{\varepsilon}) (p - 1) \right) x(t) + \displaystyle \int_0^t \frac{h^2(t, s)}{\Tilde{\varepsilon}^{p - 1}} x(s) \, u_t(\mathrm{d} s) + \frac{H_t^2 v(N)}{p \varepsilon^{p - 1}} \sup_{t_0 \leq s \leq T} \e(|\bar{X}_s|^q)^{p/q} \\
        x(t_0) &=& \displaystyle \frac{1}{N} \sum_{i=1}^N \frac{\e|X_{t_0}^i - \bar{Y}_{t_0}^i|^p}{p}, \quad \text{for all $t_0 \leq t \leq T$}.
    \end{array}\right.
\end{equation}
Note that $x$ is such that for all $t \geq t_0$
\begin{equation*}
    \frac{1}{N} \sum_{i = 1}^N \frac{\e(|X_t^i - \bar{X}_t^i|^p)}{p} \leq x(t),
\end{equation*}
thanks to Theorem~\ref{bigtheorem}. By using Lemma~\ref{wellposedness}, we get that for all $t \geq t_0$, we have
\begin{equation*}
    x'(t) \leq \kappa \frac{v(N)}{p \varepsilon^{p - 1}} M_{p, q} - \left[  p ( \kappa - D^1) - D^2 (\varepsilon + \Tilde{\varepsilon}) (p - 1) \right] x(t) + \frac{D^2}{\Tilde{\varepsilon}^{p - 1}} \int_0^t \frac{h^2(t, s)}{\int_0^t h^2(t, v) \; u_t(\mathrm{d} v)} x(s) \; u_t(\mathrm{d} s),
\end{equation*}
where $M_{p, q}$ is defined in~\eqref{momentuniform}.

Again, we want the condition
\begin{equation*}
    p ( \kappa - D^1) - D^2 (\varepsilon + \Tilde{\varepsilon}) (p - 1) > \frac{D^2}{\Tilde{\varepsilon}^{p - 1}}
\end{equation*}
to be satisfied. See the proof of Theorem~\ref{unifpropachaos} for the choice of parameters that satisfy this.

Instead of using Lemma~\ref{Lem:boundedgronwall}, we will need a sharper estimate, using Assumption~\ref{memoryloss}, that is this time exactly Lemma~$2.4$ of~\cite{deraynal2021reducingexittimesdiffusionsrepulsive}, but we added its proof for the sake of completeness.
\begin{lemma}
\label{Lem:decreasingGronwall}
    Let $0 \leq b < a$. If for all $t \geq t_0$,
\begin{equation*}
    x'(t) \leq - a x(t) + b \int_{t_0}^t \frac{h^2(t, s)}{\int_0^t h^2(t, v) \; u_t(\mathrm{d} v)} x(s) \; u_t(\mathrm{d} s) + c,
\end{equation*}
then under Assumption~\ref{memoryloss}, there exists $m : \mathbb{R}_+ \to \mathbb{R}_+$ a decreasing function such that for all $t \geq t_0$, we have
\begin{equation*}
    x(t) \leq \frac{c}{a - b} + m(t) \left( x(t_0) -  \frac{c}{a - b} \right)_+.
\end{equation*}
\end{lemma}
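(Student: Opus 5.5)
Set $A := \frac{c}{a-b}$ and $y(t) := x(t) - A$. Since the normalized weights $w_t(s) := h^2(t,s)/\int_0^t h^2(t,v)\,u_t(\mathrm{d}v)$ have total mass at most $1$ on $[t_0,t]$, $y$ satisfies
\[
y'(t) \leq -a\,y(t) + b\int_{t_0}^t w_t(s)\,y(s)\,u_t(\mathrm{d}s) - b\,A\Bigl(1-\int_{t_0}^t w_t\,\mathrm{d}u_t\Bigr),
\]
and the last term is nonpositive when $A \geq 0$. We may therefore reduce to the homogeneous inequality $y' \leq -a y + b\int_{t_0}^t w_t\, y\, \mathrm{d}u_t$ with $y(t_0)\le \delta := (x(t_0)-A)_+$. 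If $\delta = 0$, Lemma~\ref{Lem:boundedgronwall} already gives $x\le A$, so we normalize $\delta=1$. By the barrier argument of Lemma~\ref{Lem:boundedgronwall} we also know $y\le 1$ on $[t_0,\infty)$. The plan is then to build $m$ by an iterative ``level-decrease'' argument.

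\textbf{Iteration.} Fix $\theta\in(b/a,1)$. We claim that if $y\le M$ on $[t_0,\infty)$, then there is a finite time $T(M)$ after which $y\le \theta' M$, for some fixed $\theta'<1$ independent of $M$.

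First, the inequality gives $y'(t)\le -a y(t) + bM$ for all $t\ge t_0$. Comparison then yields $y(t)\le \frac{b}{a}M + (1-\frac{b}{a})M e^{-a(t-t_0)}$, which drops below $\theta M$ after a time $\tau = \frac{1}{a}\log\frac{1-b/a}{\theta-b/a}$ that does not depend on $M$. Call this time $s_1$.

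Next, for $t\ge s_1$ split the memory integral into $[t_0,s_1]$, where $y\le M$, and $[s_1,t]$, where we will run a barrier argument at level $\theta M$. The first piece is bounded by $M\,\epsilon(t)$ with
\[
\epsilon(t) := \int_0^{s_1} w_t\,\mathrm{d}u_t .
\]
Assumption~\ref{memoryloss} gives $\epsilon(t)\to 0$ as $t\to\infty$.

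\textbf{Main obstacle.} This splitting is the delicate step. In the window right after $s_1$, the quantity $\epsilon(t)$ need not yet be small, so the barrier at level $\theta M$ may fail there. To handle this, pick $s_2\ge s_1$ such that $\epsilon(t)\le \eta$ for all $t\ge s_2$. On $[s_1,s_2]$ we only use the crude bound $y\le M$. On $[s_2,\infty)$ we compare with the constant-input inequality
\[
y'\le -a y + b\bigl(\eta M + \sup_{[s_1,t]} y\bigr).
\]
A cleaner alternative is to repeat the first step on $[s_2,\infty)$ using the level $\max\bigl(\sup_{[s_1,s_2]}y,\ \theta M\bigr)$. Since $y$ restarts the contraction from $s_1$ onward, the same barrier argument as in Lemma~\ref{Lem:boundedgronwall} should give: for $t\ge s_2$, $y'\le -ay + bM'$ wherever $y$ reaches the level $M'$, where $M' := \frac{b}{a}(1+\eta)M + \text{small}$. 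Choosing $\eta$ so that $\frac{b}{a}(1+\eta)<\theta'<1$ gives $y\le\theta' M$ for $t\ge T(M) := s_2+\tau$.

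\textbf{Defining $m$.} Iterating gives times $T_0=t_0<T_1<T_2<\dots$ with $y\le \theta'^{\,k}$ on $[T_k,\infty)$. If $T_k\to\infty$, set $m(t) := \theta'^{\,k}$ for $t\in[T_k,T_{k+1})$ and $m(t)=1$ for $t<t_0$. This $m$ is nonincreasing and tends to $0$ (replace it by a strictly decreasing majorant if strict decrease is needed). If instead $T_k$ stays bounded, then $y\le 0$ from some time on and the bound is trivial. Undoing the normalization gives $x(t)\le A + m(t)\,(x(t_0)-A)_+$. The function $m$ depends only on $a$, $b$, $\theta$, $t_0$ and the rate in Assumption~\ref{memoryloss}, not on $x$; this uniformity is what Proposition~\ref{meanfieldlongtime} needs.
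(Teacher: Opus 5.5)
The reduction to the homogeneous inequality (for $c\ge 0$) and your first step are fine, but the iteration does not close as written.

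\textbf{The gap.} Your inductive claim assumes $y\le M$ on all of $[t_0,\infty)$. Its conclusion only gives $y\le\theta' M$ on $[T(M),\infty)$, so it cannot be fed back in to produce $T_2,T_3,\dots$.

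Under the hypothesis as you state it, the claim is already immediate from your first step. The comparison bound $\frac{b}{a}M+(1-\frac{b}{a})Me^{-a(t-t_0)}$ is decreasing, so $y\le\theta M$ on all of $[s_1,\infty)$, without using Assumption~\ref{memoryloss}. That is a warning sign. Take the first impression model of Section~\ref{seccounterexample} with $t_0=0$, $u_t=\delta_0$, $h^2\equiv 1$: your claim holds there, yet $y'=-ay+by(0)$ gives $y(t)\to\frac{b}{a}y(0)\neq 0$. What fails in that model is exactly the passage from ``$y\le M$ after $T$'' to ``$y\le\theta' M$ later''. The early window, where $y$ is still of order $1$, keeps its full weight in the memory term.

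Your ``main obstacle'' paragraph applies memory loss to $[t_0,s_1]$ while bounding $y$ there by the same $M$ as afterwards. This buys nothing: it produces the level $\frac{b}{a}(1+\eta)M$, which is worse than the $\frac{b}{a}M$ you get without splitting. The barrier at level $\theta M$ you worry about is already supplied by step one. The phrases ``should give'' and ``$+\,$small'' sit exactly where the real argument is missing.

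\textbf{The repair.} This is the structure of the paper's proof, which keeps a global envelope $g\le m_n$ on $[t_0,\infty)$ (with $m_n\le 1$ early and small late) and uses Assumption~\ref{memoryloss} to kill $\int_0^{t'_{n+1}} w_t\, m_n\,\mathrm{d}u_t$. At stage $k$, carry two bounds: $y\le 1$ on $[t_0,\infty)$ and $y\le M_k:=\theta'^k$ on $[T_k,\infty)$.
\begin{itemize}
\item For $t\ge T_k$, split at $T_k$ to get $\int_{t_0}^t w_t\,y\,\mathrm{d}u_t\le\epsilon_k(t)+M_k$, where $\epsilon_k(t):=\int_0^{T_k}w_t\,\mathrm{d}u_t\to 0$.
\item Choose $S_k\ge T_k$ with $\epsilon_k\le\eta M_k$ on $[S_k,\infty)$. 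The threshold must scale with $M_k$, because the early bound is $1$, not $M_k$.
\item Then $y'\le -ay+b(1+\eta)M_k$ on $[S_k,\infty)$ with $y(S_k)\le M_k$. Once $\frac{b}{a}(1+\eta)<\theta'$, comparison gives $y\le\theta' M_k$ from $S_k+\tau$ on, with $\tau$ independent of $k$.
\item Set $T_{k+1}:=\max(S_k+\tau,T_k+1)$. This closes the induction, forces $T_k\to\infty$, and keeps $m$ independent of $x$.
\end{itemize}
This is essentially your computation, with the split taken at $T_k$ instead of $s_1$ and the correct bound on the early window.
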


\begin{proof}
Denote $A := \frac{c}{a - b}$. Show that there exists $m : \mathbb{R}_+ \to \mathbb{R}_+$ such that $x(t) \leq A + m(t) \left( x(t_0) - A \right)_+$.

First, let us show that if $x(t_0) \leq A$, then for all $t \geq t_0$, $x(t) \leq A$.

Let us take $B > A$. Suppose $x(t_0) \leq B$. Denote $t^\ast := \inf\{ s, \; x(s) > B \}$. Suppose by contradiction that $t^\ast < \infty$.

By continuity, $x(t^\ast) = B$, and for all $0 \leq s \leq t^\ast$, $x(s) \leq B$. Therefore
\begin{equation*}
    x'(t^\ast) \leq - a B + b B + c = - (a - b)(B  - A) < 0.
\end{equation*}
So there exists $\eta > 0$ such that for all $u \in [0, 1]$, $x(t^\ast + u \eta) \leq B$. Contradiction.

So $x(t) \leq B$, for all $t \geq t_0$. Since this is true for all $B > A$, we get that if $x(t_0) \leq A$, then $x(t) \leq A$ for all $t \geq t_0$.
\bigbreak

Then, let us prove the result if $x(t_0) > A$. Denote $g(t) := \frac{x(t) - A}{x(t_0) - A}$. This function is well defined thanks to the previous case, and it satisfies $g(t) \leq 1$ for all $t \geq t_0$ as well. Then we have the following inequality
\begin{equation*}
    g'(t) \leq - a g(t) + b \int_{t_0}^t \frac{h^2(t, s)}{\int_0^t h^2(t, v) \; u_t(\mathrm{d} v)} g(s) \; u_t(\mathrm{d} s) \leq - a g(t) + b.
\end{equation*}
By a similar reasoning as in \textbf{Step $5$.} of the proof of Theorem~\ref{bigtheorem}, we get that for all $t \geq t_0$
\begin{equation*}
    g(t) \leq m_0(t), \quad \text{where} \quad m_0(t) := e^{- a (t - t_0)} + \frac{b}{a} (1 - e^{- a (t - t_0)}), \quad t \geq t_0
\end{equation*}
That is to say $g$ is smaller than $m_0$ solution to the ODE $y' = - a y + b$ on $[t_0, +\infty)$ with initial condition $y(t_0) = 1$. Let us take $\gamma := \sqrt{b/a}$. Remark that $0 < b/a < \gamma < 1$. Set $\Tilde{t}_0 := t_0$.

Suppose by induction that for a fixed $n \in \mathbb{N}$, we have constructed $m_n : [t_, +\infty) \to \mathbb{R}_+$ and $\Tilde{t}_n \geq n$, and that $m_n(t) \xrightarrow[t \to + \infty]{} \gamma^{n + 2}$.

Choose $t_{n + 1}' \geq \Tilde{t}_n + 1$ such that for all $t \geq t_{n + 1}'$, we have
\begin{equation}
\label{m_nbound}
    m_n(t) \leq \frac{\gamma^{n + 1}}{2} \leq \gamma^{n + 1}.
\end{equation}
Then we have 
\begin{equation*}
    \int_{t_0}^t \frac{h^2(t, s)}{\int_0^t h^2(t, v) \; u_t(\mathrm{d} v)} m_n(s) \; u_t(\mathrm{d} v) = \int_{t_0}^{t_{n + 1}'} \frac{h^2(t, s)}{\int_0^t h^2(t, v) \; u_t(\mathrm{d} v)} m_n(s) \; u_t(\mathrm{d} v) + \int_{t_{n + 1}'}^t \frac{h^2(t, s)}{\int_0^t h^2(t, v) \; u_t(\mathrm{d} v)} m_n(s) \; u_t(\mathrm{d} v).
\end{equation*}
We already know that 
\begin{equation}
\label{one}
    \int_{t_{n + 1}'}^t \frac{h^2(t, s)}{\int_0^t h^2(t, v) \; u_t(\mathrm{d} v)} m_n(s) \; u_t(\mathrm{d} v) \leq \frac{\gamma^{n + 1}}{2}.
\end{equation}
By hypothesis, there exists $\Tilde{t}_{n + 1} \geq t_{n + 1}'$ such that for all $t \geq \Tilde{t}_{n + 1}$, we have
\begin{equation}
\label{two}
    \int_0^{t_{n + 1}'} \frac{h^2(t, s)}{\int_0^t h^2(t, v) \; u_t(\mathrm{d} v)} m_n(s) \; u_t(\mathrm{d} v) \leq \frac{\gamma^{n + 1}}{2}.
\end{equation}
Therefore, by adding~\eqref{one} and~\eqref{two}, we have that for all $t \geq \Tilde{t}_{n + 1}$
\begin{equation*}
    \int_{t_0}^t \frac{h^2(t, s)}{\int_0^t h^2(t, v) \; u_t(\mathrm{d} v)} m_n(s) \; u_t(\mathrm{d} v) \leq \gamma^{n + 1}.
\end{equation*}
Define then $m_{n + 1}(t) := m_n(t)$ if $t < \Tilde{t}_{n + 1}$, and 
\begin{equation*}
    m_{n + 1}(t) := e^{- a (t - \Tilde{t}_{n + 1})} \gamma^{n + 1} + \left( 1 - e^{- a (t - \Tilde{t}_{n + 1})} \right) \gamma^{n + 3},
\end{equation*}
if $t \geq \Tilde{t}_{n + 1}$, we get that $m_{n + 1}(t) \xrightarrow[t \to + \infty]{} \gamma^{n + 3}$. This concludes the construction by induction.

Let us now prove by induction that $g(t) \leq m_n(t)$ for all $t \geq t_0$. We already treated the case $n = 0$ previously. Let us suppose that, for a fixed $n \in \mathbb{N}$, $g(t) \leq m_n(t)$ for all $t \geq 0$.

Then, for all $0 \leq t < \Tilde{t}_{n + 1}$, we have $g(t) \leq m_{n + 1}(t) = m_n(t)$, by hypothesis of induction. Then, if $t \geq \Tilde{t}_{n + 1}$, we have 
\begin{equation*}
    g'(t) \leq - a g(t) + \int_0^t \frac{h^2(t, s)}{\int_0^t h^2(t, v) \; u_t(\mathrm{d} v)} m_n(s) \; u_t(\mathrm{d} s) \leq - a g(t) + b \gamma^{n + 1},
\end{equation*}
by construction of $\Tilde{t}_{n + 1}$. Therefore for all $t \geq \Tilde{t}_{n + 1}$, we have 
\begin{align*}
    g(t) &\leq g(\Tilde{t}_{n + 1}) e^{- a (t - \Tilde{t}_{n + 1})} + \left( 1 - e^{ - a (t - \Tilde{t}_{n + 1})}\right) \frac{b}{a} \gamma^{n + 1} \\
    &\leq \gamma^{n + 1} e^{- a (t - \Tilde{t}_{n + 1})} + \left( 1 - e^{ - a (t - \Tilde{t}_{n + 1})}\right) \gamma^{n + 3} = m_{n + 1}(t),
\end{align*}
because $g(\Tilde{t}_{n + 1}) \leq m_n(\Tilde{t}_{n + 1})$ by induction hypothesis, and because of~\eqref{m_nbound} and because of the definition of $\gamma$. By induction, this concludes the proof that $g(t) \leq m_n(t)$, for all $t \geq t_0$ and all integer $n$.

Finally, define $m : [ t_0, + \infty ) \to \mathbb{R}_+$ a function such that $m(t) := \gamma^n$ on $[\Tilde{t}_n, \Tilde{t}_{n + 1})$. This defines a function on $[t_0, + \infty)$ since $n \leq \Tilde{t}_n \to + \infty$. Therefore, by~\eqref{m_nbound}, we know that $g(t) \leq m(t)$ for all $t \geq t_0$, and $m(t) \xrightarrow[t \to + \infty]{} 0$.

This concludes, by using the definition of $g$.
\end{proof}
By application of the lemma to $x$ defined in~\eqref{x}, we get that for all $t \geq t_0$
\begin{equation*}
    x(t) \leq C v(N) + m(t) \left( \frac{1}{N} \sum_{i = 1}^N \e(|\bar{X}_{t_0}^i - \bar{Y}_{t_0}^i|^p) - C v(N) \right)_+.
\end{equation*}
By applying~\eqref{eqPoC}, we then get that for all $t \geq t_0$
\begin{equation*}
    \frac{1}{N} \sum_{i = 1}^N \e(|X_t^{i, N} - \bar{Y}_t^i|^p) \leq C_1 v(N) + m(t) \left( C_2 \frac{1}{N} \sum_{i = 1}^N \e(|\bar{X}_0^i - \bar{Y}_0^i|^p) - C_3 v(N) \right)_+.
\end{equation*}
Since $Law\left( X_t^i - \bar{Y}_t^i \right) = Law\left( X_t^1 - \bar{Y}_t^1 \right)$, and $Law\left( \bar{X}_0^i - \bar{Y}_0^i \right) = Law\left( X_0^1 - \bar{Y}_0^1 \right)$ for all $i = 1, \dots, N$, then we have for all $t \geq t_0$
\begin{equation*}
    \e(|X_t^{1, N} - \bar{Y}_t^1|^p) \leq C_1 v(N) + m(t) \left[ C_2 \e(|\bar{X}_0^1 - \bar{Y}_0^1|^p) - C_3 v(N) \right]_+.
\end{equation*}
By letting $N \to + \infty$, we get
\begin{equation*}
    \e(|\bar{X}_t^1 - \bar{Y}_t^1|^p) \leq C_2 m(t) \e(|\bar{X}_0^1 - \bar{Y}_0^1|^p),
\end{equation*}
where $\bar{X}_t^1$ is solution to the limit system associated with the particle system $X_t^i$.

Note that by optimization on all the couplings between $Law(\bar{X}_t^1)$ and $Law(\bar{Y}_t^1)$, and the couplings between $Law(\bar{X}_0^1)$ and $Law(\bar{Y}_0^1)$, we get
\begin{equation}
    \label{contractionMFL}
    W_p\left( Law(\bar{X}_t^1), Law(\bar{Y}_t^1) \right) \leq C_2 ^{1/p} m(t)^{1/p} W_p\left( Law(\bar{X}_0^1), Law(\bar{Y}_0^1) \right). 
\end{equation}

\subsection{Proof of Proposition~\ref{coro}.}
\begin{itemize}
    \item Suppose the condition ($1$) is satisfied. Let $\bar{X}_t$ be the solution to~\eqref{fixed} with initial condition $\mu_\infty$. Since $\mu_\infty$ is invariant, we have $Law\left( \bar{X}_t\right) = \mu_\infty$ for all $t \geq 0$. So $\bar{X}_t$ is also solution to the McKean-Vlasov equation~\eqref{changing}. By strong uniqueness, it means $\bar{X}_t$ is the only (in law) solution to~\eqref{changing}) with initial condition $\mu_\infty$. So the solution to~\eqref{changing} with initial law $\mu_\infty$ has a constant law equal to $\mu_\infty$. Therefore the condition ($1$) implies the condition ($2$).
    \item Suppose that the condition ($2$) is satisfied. Consider $\bar{X}_t$ the solution to~\eqref{changing} with $\bar{X}_0 \sim \mu_\infty$. It satisfies $\mu_t := Law\left(\bar{X}_t\right) = \mu_\infty$ for all $t \geq 0$, so that, under Assumption~\ref{b}, 
    \begin{equation*}
        \int_0^t L(t, s, \bar{X}_t, \mu_s) \; u_t(\mathrm{d} s) = b(\bar{X}_t, \mu_\infty) = b(\bar{X}_t, \mu_t).
    \end{equation*}
    So $\bar{X}_t$ is also solution to~\eqref{fullmemory}. By uniqueness in law of the solution of~\eqref{fullmemory} given by Proposition~\ref{wellposedness}, it is the only solution to~\eqref{fullmemory} with initial distribution $\mu_\infty$. Therefore the condition ($3$) is satisfied as well.
    \item Suppose now that condition ($3$) is satisfied. Consider $\bar{X}_t$ solution to~\eqref{fullmemory} with initial distribution $\mu_\infty$. It is such that $\mu_t = \mu_\infty$ for all $t \geq 0$. Under Assumption~\ref{b}, we have
    \begin{equation*}
        \int_0^t L(t, s, \bar{X}_t, \mu_s) \; u_t(\mathrm{d} s) = b(\bar{X}_t, \mu_\infty),
    \end{equation*}
    so that $\bar{X}_t$ is a solution to~\eqref{fixed}. Since the law of $\bar{X}_t$ is constant, it is an invariant measure of~\eqref{fixed}, so that the condition ($1$) is satisfied.
\end{itemize}

\subsection{Proof of Corollary~\ref{equilibrium} on the existence of an equilibrium.}
It is enough to prove the existence and uniqueness of an equilibrium for~\eqref{changing}. Let us denote $\Phi_t$ the application mapping $\mu_0$ to $\mu_t$, where $\mu_t$ is the law of the solution to~\eqref{changing} at time $t$, starting from the initial distribution $\mu_0$. Let us take $\mu_0$, $\nu_0 \in \mathcal{P}_p\left( \mathbb{R}^d \right)$, and $X_t$, $Y_t$ solutions to~\eqref{changing} with initial distributions respectively $\mu_0$ and $\nu_0$. We denote $\mu_t$ the law of $X_t$ and $\nu_t$ the law of $Y_t$. Then we have
\begin{align*}
    \frac{\diff}{\diff t} \frac{|X_t - Y_t|^p}{p} =& - |X_t - Y_t|^{p - 2} \left( F(X_t) - F(Y_t) \right) \cdot \left( X_t - Y_t \right) \\
    &+ |X_t - Y_t|^{p - 2} \left( b(X_t, \mu_t) - b(X_t, \nu_t) \right) \cdot \left( X_t - Y_t \right) + |X_t - Y_t|^{p - 2} \left( b(X_t, \nu_t) - b(Y_t, \nu_t) \right) \cdot \left( X_t - Y_t \right) \\
    \leq& - p \kappa \frac{|X_t - Y_t|^p}{p} + H_t^2 |X_t - Y_t|^{p - 1} W_p\left( \mu_t, \nu_t \right) + H_t^1 |X_t - Y_t|^{p - 1} \, |X_t - Y_t|.  
\end{align*}
Let us now apply Young inequality with $\frac{1}{p} + \frac{p - 1}{p} = 1$.
\begin{equation}
\label{orange}
    \frac{\diff}{\diff t} \frac{|X_t - Y_t|^p}{p}
    \leq - p \kappa \frac{|X_t - Y_t|^p}{p} + \varepsilon (p - 1) H_t^2 \frac{|X_t - Y_t|^p}{p} + \frac{1}{p \varepsilon^{p - 1}} H_t^2 W_p^p\left( \mu_t, \nu_t \right) + p H_t^1 \frac{|X_t - Y_t|^p}{p}.  
\end{equation}
Let us fix $D^1 > \overline{D}^1$ and $D^2 > \overline{D}^2$ such that $\kappa > D^1 + D^2$. There exists $t_0$ such that for all $t \geq t_0$, we have
\begin{equation}
\label{asy}
    H_t^1 \leq D^1 \quad \text{and} \quad H_t^2 \leq D^2.
\end{equation}
By taking the expectation, using $W_p^p\left( \mu_t, \nu_t \right) \frac{1}{N} \sum_{i = 1}^N \e|X_t - Y_t|^p$ and using~\eqref{asy}, we get that for all $t \geq t_0$
\begin{equation*}
    \frac{\diff}{\diff t} \frac{\e(|X_t - Y_t|^p)}{p}
    \leq p \left(- \kappa + \left(\varepsilon \frac{p - 1}{p} + \frac{1}{p \varepsilon^{p - 1}} \right) D^2 + D^1 \right) \frac{\e(|X_t - Y_t|^p)}{p}.
\end{equation*}
Then, by taking $\varepsilon := 1$ that optimizes the bound, we get that for all $t \geq t_0$
\begin{equation*}
    \frac{\diff}{\diff t} \frac{\e(|X_t - Y_t|^p)}{p} \leq p \left(- \kappa + D^1 + D^2 \right) \frac{\e(|X_t - Y_t|^p)}{p}.
\end{equation*}
We have, by Gronwall's lemma, for all $t \geq t_0$
\begin{equation}
\label{asymptotic}
    \e(|X_t - Y_t|^p) \leq C e^{- \lambda t} \e(|X_{t_0} - Y_{t_0}|^p),
\end{equation}
for certain positive constants $C$ and $\lambda$.

Additionally, by taking the expectation of~\eqref{orange} and using $D_{t_0}$ defined in~\eqref{D_T}, we have for all $t \in [0, t_0]$
\begin{equation*}
    \frac{\diff}{\diff t} \frac{\e(|X_t - Y_t|^p)}{p}
    \leq p \left(- \kappa + \left( 1 + \varepsilon \frac{p - 1}{p} + \frac{1}{p \varepsilon^{p - 1}} \right) D_{t_0} \right) \frac{\e(|X_t - Y_t|^p)}{p}.
\end{equation*}
We choose again $\varepsilon = 1$, and denote $\lambda' := - p \kappa + 2 p D_{t_0}$. By Gronwall's lemma, we get that
\begin{equation}
\label{finitetime}
    \e(|X_{t_0} - Y_{t_0}|^p) \leq C' e^{\lambda' t_0} \e(|X_0 - Y_0|^p),
\end{equation}
for certain positive constants $C'$ and $\lambda'$. Then by plugging~\eqref{finitetime} into~\eqref{asymptotic}, we get that for all $t \geq t_0$
\begin{equation*}
     \e(|X_t - Y_t|^p) \leq \Tilde{C} e^{- \lambda t} \e(|X_0 - Y_0|^p).
\end{equation*}
By optimizing on all of the couplings between $\mu_t = \Phi_t(\mu_0)$ and $\nu_t = \Phi_t(\nu_0)$ and all of the couplings between $\mu_0$ and $\nu_0$, we get that
\begin{equation*}
    W_p^p\left( \Phi_t(\mu_0), \Phi_t(\nu_0) \right) \leq \Tilde{C} e^{- \lambda t} W_p^p\left( \mu_0, \nu_0 \right),
\end{equation*}
which means that for $\Phi_t$ is a contraction on $\mathcal{P}_p\left( \mathbb{R}^d \right)$, so it has a unique fixed point that we denote $m_t$. Then, for all $s \geq 0$, we have 
\begin{equation*}
    \Phi_s(m_t) = \Phi_s(\Phi_t(m_t)) = \Phi_{t + s}(m_t) = \Phi_t(\Phi_s(m_t)).
\end{equation*}
So $\Phi_s(m_t)$ and $m_t$ are both fixed point of $\Phi_t$ which has only one fixed point. So $\Phi_s(m_t) = m_t$, and $m_t$ does not depend on $t$, in fact. 
Let us therefore denote it $m_\infty$ for the rest of the proof. 
We have for all $t \geq 0$, $\Phi_t(m_\infty) = m_\infty$, 
which exactly means that $m_\infty$ is an equilibrium for~\eqref{changing}. This concludes for the existence and uniqueness of an equilibrium for~\eqref{fullmemory}, thanks to Proposition~\ref{coro}.

\subsection{Proof of Proposition~\ref{longtime}.}
The proof breaks into four steps. In \textbf{Step 1.}, we establish some Lipschitz regularity result on $b$. Then in \textbf{Step 2.}, we deduce that there exists an equilibrium for~\eqref{changing}. In \textbf{Step 3.}, we compare the solutions to~\eqref{changing} and~\eqref{fullmemory}. Eventually, in \textbf{Step 4.}, we prove that solutions to~\eqref{fullmemory} all tend to $\mu_\infty$, no matter what the initial condition is.
\paragraph{Step 1. Let us find some regularity property on $b$.}
Let us deduce some property on $b$ from Assumption~\ref{assumparticles}. Let us take $\mu$, $\nu \in \mathcal{P}_p\left( \mathbb{R}^d \right)$, such that their $p$-th moment is smaller than $M > 0$, and $\pi$ a coupling between $\mu$ and $\nu$. Let us fix $\varepsilon > 0$. Let us denote
\begin{equation*}
    \varepsilon_t(m) := \left( \int_x \left| b(x, m) - \int_0^t L(t, s, x, m) \; u_t(\mathrm{d} s) \right|^p \; m(\mathrm{d} x) \right)^{1/p}.
\end{equation*}
By applying Minkowski inequality twice $\mathbb{L}^p(\pi)$
\begin{align*}
    \left( \int_x \int_y |b(x, \mu) - b(y, \nu)|^p \; \pi(\mathrm{d} x, \diff y) \right)^{1/p} \leq& \left( \int_x \left| b(x, \mu) - \int_0^t L(t, s, x, \mu) \; u_t(\mathrm{d} s) \right|^p \; \mu(\mathrm{d} x) \right)^{1/p} \\
    &+ \left( \int_x \int_y \left| \int_0^t \left( L(t, s, x, \mu) - L(t, s, y, \nu) \right) \; u_t(\mathrm{d} s) \right|^p \; \pi(\mathrm{d} x, \diff y) \right)^{1/p} \\
    &+ \left( \int_y \left| \int_0^t L(t, s, x, \mu) \; u_t(\mathrm{d} s) - b(y, \nu) \right|^p \; \nu(\mathrm{d} y) \right)^{1/p} \\
    \leq& \varepsilon_t(\mu) + \varepsilon_t(\nu) + \left( \int_x \int_y \left( H_t^1 |x - y| + H_t^2 W_p\left( \mu, \nu \right) \right)^p \; \pi(\mathrm{d} x, \diff y) \right)^{1/p} \\
    \leq& \varepsilon_t(\mu) + \varepsilon_t(\nu) + H_t^1 \left( \int_x \int_y \left| x - y \right|^p \; \pi(\mathrm{d} x, \diff y) \right)^{1/p} + H_t^2 W_p\left( \mu, \nu \right).
\end{align*}
Then, we get
\begin{equation*}
    \e\left( | b(X, \mu) - b(Y, \nu) |^p \right)^{1/p} \leq \varepsilon_t(\mu) + \varepsilon_t(\nu) + H_t^1 \e(|X - Y|^p)^{1/p} + H_t^2 W_p\left( \mu, \nu \right),
\end{equation*}
with $X \sim \mu$ and $Y \sim \nu$. Since the left-hand side of the inequality does not depend on $t$, we may pass to the $\limsup$ on the right-hand side. So we have
\begin{equation}
    \label{lipb}
    \e\left( | b(X, \mu) - b(Y, \nu) |^p \right)^{1/p} \leq \overline{D}^1 \e(|X - Y|^p)^{1/p} +  \overline{D}^2 W_p\left( \mu, \nu \right).
\end{equation}

\paragraph{Step 2. Let us prove that there exists an equilibrium for~\eqref{changing} under our assumption on $b$.}
Let us denote $\Phi_t$ the application mapping $\mu_0$ to $\mu_t$, where $\mu_t$ is the law of the solution to~\eqref{changing} at time $t$, starting from the initial distribution $\mu_0$, and with the drift $b$ for which we determined the regularity property~\eqref{lipb}. Let us take $\mu_0$, $\nu_0 \in \mathcal{P}_p\left( \mathbb{R}^d \right)$, and $X_t$, $Y_t$ solutions to~\eqref{changing} with initial distributions respectively $\mu_0$ and $\nu_0$. We denote $\mu_t$ the law of $X_t$ and $\nu_t$ the law of $Y_t$. Then we have
\begin{align*}
    \frac{\diff}{\diff t} \frac{|X_t - Y_t|^p}{p} =& - |X_t - Y_t|^{p - 2} \left( F(X_t) - F(Y_t) \right) \cdot \left( X_t - Y_t \right) + |X_t - Y_t|^{p - 2} \left( b(X_t, \mu_t) - b(Y_t, \nu_t) \right) \cdot \left( X_t - Y_t \right) \\
    \leq& - p \kappa \frac{|X_t - Y_t|^p}{p} + |X_t - Y_t|^{p - 1} \, |b(X_t, \mu_t) - b(Y_t, \nu_t)|.  
\end{align*}
Let us now take the expectation of the previous inequality and apply Hölder inequality with $\frac{1}{p} + \frac{p - 1}{p} = 1$, and then use~\eqref{lipb}.
\begin{align*}
    \frac{\diff}{\diff t} \frac{\e|X_t - Y_t|^p}{p}
    \leq& - \kappa \e(|X_t - Y_t|^p) + \e(|X_t - Y_t|^p)^{\frac{p - 1}{p}} \e\left(|b(X_t, \mu_t) - b(Y_t, \mu_t)|^p\right)^{1/p} \\
    \leq& - \kappa \e(|X_t - Y_t|^p) + \overline{D}^1 \, \e(|X_t - Y_t|^p) + \overline{D}^2 \, W_p(\mu, \nu) \e\left(|X_t - Y_t|^p\right)^{\frac{p - 1}{p}}.  
\end{align*}
Let us then apply Young inequality with $\frac{1}{p} + \frac{p - 1}{p} = 1$ and using $W_p^p(\mu_t, \nu_t) \leq \e(|X_t - Y_t|^p)$. We get
\begin{align*}
    \frac{\diff}{\diff t} \frac{\e|X_t - Y_t|^p}{p} \leq& - \kappa \e(|X_t - Y_t|^p) + \overline{D}^1 \e(|X_t - Y_t|^p) + \overline{D}^2 \frac{1}{p} W_p^p(\mu_t, \nu_t) + \overline{D}^2 \frac{(p - 1)}{p} \e(|X_t - Y_t|^p) \\
    \leq& \left( - \kappa + \overline{D}^1 + \overline{D}^2 \right) \e(|X_t - Y_t|^p).
\end{align*}
Denote $\lambda := \kappa - \overline{D}^1 - \overline{D}^2$. By Gronwall's lemma, we get that
\begin{equation*}
    \e(|X_t - Y_t|^p) \leq e^{- \lambda t} \e(|X_0 - Y_0|^p).
\end{equation*}

As in the proof of Corollary~\ref{equilibrium}, this yields the existence and uniqueness of an equilibrium for~\eqref{changing}, thanks to a semi-group argument. Denote it $\mu_\infty$.

\paragraph{Step 3. We compare the solutions to~\eqref{fullmemory} and~\eqref{changing}, both starting at $\mu_\infty$.}
Let $X_t$ be a solution to~\eqref{fullmemory} with initial distribution $\mu_\infty$, of law denoted $\nu_t$, and $Y_t$ be a solution to~\eqref{changing} with initial distribution $\mu_\infty$. We know by the previous step that $Law(Y_t) = \mu_\infty$, for all $t \geq 0$. By parallel coupling, Assumption~\ref{assumparticles} and Cauchy-Schwartz inequality, we get that
\begin{align*}
    \frac{\diff}{\diff t} & \frac{\left| X_t - Y_t \right|^p}{p} \\
    &= - \left| X_t - Y_t \right|^{p - 2} \left( X_t - Y_t \right) \cdot \left( F(X_t) - F(Y_t) \right) + \left| X_t - Y_t \right|^{p - 2} \left( X_t - Y_t \right) \cdot \left[ \int_0^t L(t, s, X_t, \nu_s) \; u_t(\mathrm{d} s) - b(Y_t, \mu_\infty) \right] \\
    &\leq - \kappa \left| X_t - Y_t \right|^p + \left| X_t - Y_t \right|^{p - 1} \left| \int_0^t L(t, s, X_t, \nu_s) \; u_t(\mathrm{d} s) - b(Y_t, \mu_\infty) \right|.
\end{align*}
By taking the expectation and using Hölder inequality, we get
\begin{equation}
\label{eqHolder}
    \frac{\diff}{\diff t} \frac{\e\left| X_t - Y_t \right|^p}{p} \leq - \kappa \e\left| X_t - Y_t \right|^p + \e(\left| X_t - Y_t \right|^p)^{\frac{p - 1}{p}} \e\left(\left| \int_0^t L(t, s, X_t, \nu_s) \; u_t(\mathrm{d} s) - b(Y_t, \mu_\infty) \right|^p\right)^{1/p}.
\end{equation}
Let us study more specifically the term $\e\left(\left| \int_0^t L(t, s, X_t, \nu_s) \; u_t(\mathrm{d} s) - b(Y_t, \mu_\infty) \right|^p\right)^{1/p}$. We have by Minkowski inequality
\begin{align*}
    \e&\left(\left| \int_0^t L(t, s, X_t, \nu_s) \; u_t(\mathrm{d} s) - b(Y_t, \mu_\infty) \right|^p\right)^{1/p} \\
    &\leq \e\left(\left| \int_0^t \left[ L(t, s, X_t, \nu_s) - L(t, s, Y_t, \mu_\infty) \right] \; u_t(\mathrm{d} s) \right|^p\right)^{1/p} + \e\left(\left| \int_0^t L(t, s, X_t, \mu_\infty) \; u_t(\mathrm{d} s) - b(X_t, \mu_\infty) \right|^p\right)^{1/p} \\
    &\leq \e\left(\left| \int_0^t \left[ h^1(t, s) | X_t - Y_t | + h^2(t, s) W_p(\mu_\infty, \nu_s) \right] \; u_t(\mathrm{d} s) \right|^p\right)^{1/p} + \varepsilon_t(\mu_\infty).
\end{align*}
By applying Minkowski another time and then Jensen inequality, we get that
\begin{align*}
    \e&\left(\left| b(X_t, \mu_\infty) - \int_0^t L(t, s, Y_t, \nu_s) \; u_t(\mathrm{d} s) \right|^p\right)^{1/p} \\
    &\leq \varepsilon_t(\mu_\infty) + H_t^1 \e(|X_t - Y_t|^p)^{1/p} + \left| \int_0^t h^2(t, s) W_p(\mu_\infty, \nu_s) \; u_t(\mathrm{d} s) \right| \\
    &\leq \varepsilon_t(\mu_\infty) + H_t^1 \e(|X_t - Y_t|^p)^{1/p} + H_t^2 \left( \int_0^t \frac{h^2(t, s)}{\int_0^t h^2(t, w) \; u_t(\mathrm{d} w)} W_p^p(\mu_\infty, \nu_s) \; u_t(\mathrm{d} s) \right)^{1/p}.
\end{align*}
If we plug this into inequality~\eqref{eqHolder}, and apply Young inequality with $\frac{1}{p} + \frac{p - 1}{p} = 1$ and parameter $\alpha > 0$, we have
\begin{align*}
    \frac{\diff}{\diff t} \frac{\e\left| X_t - Y_t \right|^p}{p} \leq& - \kappa \e(\left| X_t - Y_t \right|^p) + \varepsilon_t(\mu_\infty) \e(\left| X_t - Y_t \right|^p)^{\frac{p - 1}{p}} + H_t^1 \e(|X_t - Y_t|^p) \\
    &+ H_t^2 \left( \int_0^t \frac{h^2(t, s)}{\int_0^t h^2(t, v) \; u_t(\mathrm{d} v)} W_p^p(\mu_\infty, \nu_s) \; u_t(\mathrm{d} s) \right)^{1/p} \e(\left| X_t - Y_t \right|^p)^{\frac{p - 1}{p}} \\
    \leq& - \kappa \e(\left| X_t - Y_t \right|^p) + \frac{\varepsilon_t(\mu_\infty)^p}{p \alpha^{p - 1}} + \frac{\alpha (p - 1)}{p}  \e(\left| X_t - Y_t \right|^p) + H_t^1 \e(\left| X_t - Y_t \right|^p) \\
    &+ \frac{H_t^2}{p \beta^{p - 1}} \int_0^t \frac{h^2(t, s)}{\int_0^t h^2(t, v) \; u_t(\mathrm{d} v)} W_p^p(\mu_\infty, \nu_s) \; u_t(\mathrm{d} s) + H_t^2 \frac{\beta (p - 1)}{p} \e(\left| X_t - Y_t \right|^p).
\end{align*}
Let us rearrange the terms. We get, by using the inequality $W_p^p(\mu_\infty, \nu_s) \leq \e|X_s - Y_s|^p$, that for all $t \geq 0$ 
\begin{align*}
    \frac{\diff}{\diff t} \frac{\e\left| X_t - Y_t \right|^p}{p} \leq& \frac{\varepsilon_t(\mu_\infty)^p}{p \alpha^{p - 1}} + p \left[ - \kappa + \frac{\alpha (p - 1)}{p} + H_t^1 + H_t^2 \frac{\beta (p - 1)}{p} \right] \frac{\e|X_t - Y_t|^p}{p} \\
    &+ \frac{H_t^2}{\beta^{p - 1}} \int_0^t \frac{h^2(t, s)}{\int_0^t h^2(t, v) \; u_t(\mathrm{d} s)} \frac{\e|X_s - Y_s|^p}{p} \; u_t(\mathrm{d} s). 
\end{align*}
Let us fix $D^1 > \overline{D}^1$ and $D^2 > \overline{D}^2$ such that $\kappa > D^1 + D^2$. There exists $t_0$ such that for all $t \geq t_0$, we have
\begin{equation}
\label{asy_}
    H_t^1 \leq D^1 \quad \text{and} \quad H_t^2 \leq D^2.
\end{equation}
Therefore, for all $t \geq t_0$
\begin{align*}
    \frac{\diff}{\diff t} \frac{\e\left| X_t - Y_t \right|^p}{p} \leq& \frac{\varepsilon_t(\mu_\infty)^p}{p \alpha^{p - 1}} + p \left[ - \kappa + \alpha \frac{p - 1}{p} + D^1 + D^2 \frac{\beta (p - 1)}{p} \right] \frac{\e|X_t - Y_t|^p}{p}  \\
    & + \frac{D^2}{\beta^{p - 1}} \int_0^t \frac{h^2(t, s)}{\int_0^t h^2(t, v) \; u_t(\mathrm{d} s)} \frac{\e|X_s - Y_s|^p}{p} \; u_t(\mathrm{d} s). 
\end{align*}
We want the condition 
\begin{equation*}
    \kappa - \alpha \frac{p - 1}{p} - D^1 - D^2 \left( \beta \frac{p - 1}{p} + \frac{1}{p \beta^{p - 1}} \right) > 0
\end{equation*}
to be satisfied in order to be able to apply Lemma~\ref{Lem:decreasingGronwall}. 
The function $\beta \mapsto \beta \frac{p - 1}{p} + \frac{1}{p \beta^{p - 1}}$ is optimal when $\beta = 1$. It is then enough to choose $\alpha$ small enough, since $\kappa > D^1 + D^2$.

Then, by Lemma~\ref{Lem:decreasingGronwall}, with the previous choice of parameters, we get that there exist $m : \mathbb{R}_+ \to \mathbb{R}_+$ a decreasing function, and a certain positive constant $C$ such that, for all $t \geq t_0$
\begin{equation*}
    \e|X_t - Y_t|^p \leq C \varepsilon_t(\mu_\infty) + m(t) \left( \e|X_{t_0} - Y_{t_0}|^p - C \varepsilon_t(\mu_\infty) \right).
\end{equation*}
By optimization on all the couplings between $Law(X_t)$ and $Law(Y_t)$, we get that for all $t \geq t_0$
\begin{equation}
\label{step3_}
    W_p^p(\nu_t, \mu_\infty) \leq C \varepsilon_t(\mu_\infty) + m(t) \left( \e|X_{t_0} - Y_{t_0}|^p - C \varepsilon_t(\mu_\infty) \right).
\end{equation}

\paragraph{Step 4. Let us prove that whatever the initial distribution, solutions to~\eqref{fullmemory} tend to $\mu_\infty$.}
Let us now take $Z_t$ solution to~\eqref{fullmemory} with initial distribution $\mu_0$ and law at time $t$ denoted $\mu_t$. By triangular inequality, and by plugging in~\eqref{step3_} and~\eqref{contractionMFL}, we get that there exists $m : \mathbb{R}_+ \to \mathbb{R}_+$ a decreasing function such that for all $t \geq t_0$
\begin{align*}
    W_p(\mu_t, \mu_\infty) &\leq W_p(\mu_t, \nu_t) + W_p(\nu_t, \mu_\infty) \\
    &\leq m(t)^{1/p} W_p(\mu_0, \mu_\infty) + \left[ C \varepsilon_t(\mu_\infty) + m(t) \left( \e|X_{t_0} - Y_{t_0}|^p - C \varepsilon_t(\mu_\infty) \right) \right]^{1/p} \xrightarrow[t \to + \infty]{} 0.
\end{align*}

\subsection{Proof of Corollary~\ref{exchangelimits}.}
Let us introduce $X_t^{i, N}$ for $i = 1, \dots, N$ solution to the system~\eqref{particles} with initial condition $(X_0)_{i = 1, \dots, N}$, and $\bar{X}_t^{i, N}$ be $N$ independent copies of solutions to~\eqref{limit}, each driven by the same Brownian motions as for the system, and with the same initial conditions.
By triangular inequality we get that
\begin{equation*}
    W_p(Law(X_t^{1, N}) \, , \, \mu_\infty) \leq W_p(Law(X_t^{1, N}) \, , \, Law(\bar{X}_t^{1, N})) + W_p(Law(\bar{X}_t^{1, N}) \, , \, \mu_\infty).
\end{equation*}
By Theorem~\ref{unifpropachaos}, we know 
that $W_p(Law(X_t^{1, N}) \, , \, Law(\bar{X}_t^{1, N}))$ 
can be bounded above uniformly in $t$ by the speed function $v(N)$ introduced in~\eqref{v(N)} to the power $1/p$, up to a multiplicative constant.
By Proposition~\ref{longtime}, we know that $W_p(Law(\bar{X}_t^{1, N}) \, , \, \mu_\infty)$ 
can be bounded above uniformly in $N$ by a function in $t$ that tends to $0$ when $t \to + \infty$.

\section{Appendix}
\label{secproofwellposedness}

We need to prove the well-posedness of both the particles system and the limit equation. Since we will need to perform a parallel coupling, we need one of them to be strongly well-posed. It is indeed the case of both.
\subsection{Proof of well-posedness of the particle system (Proposition~\ref{wellposednesspart})}
We will divide the proof into four steps. \textbf{Step 1}. focuses on uniqueness, while \textbf{Step 2.}, \textbf{Step 3.}, and \textbf{Step 4.} focus on existence. In \textbf{Step 2.}, we define truncated equations for which we need to prove the well-posedness because of the memory term in the drift. Then we define a process by patching the truncated solutions in \textbf{Step 3.}. In \textbf{Step 4.}, we prove that the process defined in \textbf{Step 3.} defines indeed a global solution for our system of particles, thanks to an estimate.
\paragraph{Step 1. Strong uniqueness.}
We first prove uniqueness for~\eqref{particles}. Let $X_t^i$ and $Y_t^i$, $i=1, \dots, N$ be two a.s. continuous adapted processes that are strong solutions to~\eqref{particles} with the same initial condition and same Brownian motions. Denote $\tau_n := \inf \{ t, \, \exists i \in \llbracket 1, N \rrbracket, |X_t^i| \geq n \text{ or } |Y_t^i| \geq n \}$. By parallel coupling, $t \mapsto X_t^i - Y_t^i$ obeys to an ODE because the diffusion terms cancel out. Therefore, for $p > 1$, we have
\begin{align*}
    &\frac{1}{N}  \sum_{i = 1}^N \frac{|X_{T \wedge \tau_n}^i - Y_{T \wedge \tau_n}^i|^p}{p} \\
    =& - \frac{1}{N} \sum_{i = 1}^N \int_0^{T \wedge \tau_n} |X_t^i - Y_t^i|^{p-2} \left( X_t^i - Y_t^i \right) \cdot \left( F(X_t^i) - F(Y_t^i) \right) \diff t \\
    &+ \frac{1}{N} \sum_{i = 1}^N \int_0^{T \wedge \tau_n} |X_t^i - Y_t^i|^{p-2} \left( X_t^i - Y_t^i \right) \cdot \left[ \int_0^t L\Huge(t, s, X_t^i, \frac{1}{N} \sum_{j=1}^N \delta_{X_s^j} \Huge) u_t(\mathrm{d} s) - \int_0^t L\Huge(t, s, Y_t^i, \frac{1}{N} \sum_{j=1}^N \delta_{Y_s^j} \Huge) u_t(\mathrm{d} s) \right] \diff t.
\end{align*} 
\begin{remark}
For $p = 1$, we would need a classical regularization trick that we will not perform here. Refer to~\cite{FournierJourdain} for an example of derivation of this trick.
\end{remark}
Let us treat the first term later, and start by the second term. By using Cauchy-Schwarz inequality and Assumption~\eqref{memory}, we get
\begin{align*}
    \frac{1}{N} &\sum_{i = 1}^N |X_t^i - Y_t^i|^{p-2} \left( X_t^i - Y_t^i \right) \cdot \left[ \int_0^t L\Huge(t, s, X_t^i, \frac{1}{N} \sum_{j=1}^N \delta_{X_s^j} \Huge) u_t(\mathrm{d} s) - \int_0^t L\Huge(t, s, Y_t^i, \frac{1}{N} \sum_{j=1}^N \delta_{Y_s^j} \Huge) u_t(\mathrm{d} s) \right] \\
    \leq& \frac{1}{N} \sum_{i = 1}^N |X_t^i - Y_t^i|^{p-1} \int_0^t \left| L\Huge(t, s, X_t^i, \frac{1}{N} \sum_{j=1}^N \delta_{X_s^j} \Huge) - L\Huge(t, s, Y_t^i, \frac{1}{N} \sum_{j=1}^N \delta_{Y_s^j} \Huge) \right| u_t(\mathrm{d} s) \\
    \leq& \frac{1}{N} \sum_{i = 1}^N |X_t^i - Y_t^i|^{p-1} \int_0^t \left[ h^1(t, s) |X_t^i - Y_t^i| + h^2(t, s) W_p\left( \frac{1}{N} \sum_{j=1}^N \delta_{X_s^j} \, , \, \frac{1}{N} \sum_{j=1}^N \delta_{Y_s^j} \right) \right] u_t(\mathrm{d} s).
\end{align*}
Then, by using Young inequality with $\frac{p-1}{p} + \frac{1}{p} = 1$, and Assumption~\ref{assumparticles} and the the notation introduced in~\eqref{D_T}, we get
\begin{align}
\label{eq}
\nonumber
    \frac{1}{N} &\sum_{i = 1}^N |X_t^i - Y_t^i|^{p-2} \left( X_t^i - Y_t^i \right) \cdot \left[ \int_0^t L\Huge(t, s, X_t^i, \frac{1}{N} \sum_{j=1}^N \delta_{X_s^j} \Huge) u_t(\mathrm{d} s) - \int_0^t L\Huge(t, s, Y_t^i, \frac{1}{N} \sum_{j=1}^N \delta_{Y_s^j} \Huge) u_t(\mathrm{d} s) \right] \\
    \leq& \frac{D_T}{N} \sum_{i = 1}^N |X_t^i - Y_t^i|^p + \frac{1}{N} \sum_{i = 1}^N \int_0^t h^2(t, s) \left[ \frac{p-1}{p}|X_t^i - Y_t^i|^p + \frac{1}{p} W_p^p\left( \frac{1}{N} \sum_{j=1}^N \delta_{X_s^j} \, , \, \frac{1}{N} \sum_{j=1}^N \delta_{Y_s^j} \right) \right] u_t(\mathrm{d} s).
\end{align}
By taking the expectation, plugging in~\eqref{eq}, using~\eqref{pot}, and the fact that 
$\e \left[ W_p^p\left(\frac{1}{N} \sum_{j = 1}^N \delta_{X_s^j} \, , \, \frac{1}{N} \sum_{j = 1}^N \delta_{Y_s^j}\right) \right] \leq \frac{1}{N} \sum_{i=1}^N \e(|X_s^i - Y_s^i|^p)$, we get that for all $0 \leq t \leq T$
\begin{align*}
    \frac{1}{N} \sum_{i = 1}^N \frac{\e|X_{T \wedge \tau_n}^i - Y_{T \wedge \tau_n}^i|^p}{p} &\leq \left( |\kappa| + D_T\left(1 + \frac{p-1}{p}\right)\right) \frac{1}{N} \sum_{i = 1}^N \e\left[ \int_0^{T \wedge \tau_n} |X_t^i - Y_t^i|^p \diff t \right] \\ 
    &+ \frac{1}{pN} \sum_{i = 1}^N \e \left[ \int_0^{T \wedge \tau_n} \int_0^t h^2(t, s) \, |X_s^i - Y_s^i|^p u_t(\mathrm{d} s) \diff t \right] \\
    \leq& \left( |\kappa| + D_T\left(1 + \frac{p-1}{p}\right)\right) \frac{1}{N} \sum_{i = 1}^N \int_0^T \e|X_{t \wedge \tau_n}^i - Y_{t \wedge \tau_n}^i|^p \diff t \\ 
    &+ \frac{1}{pN} \sum_{i = 1}^N \int_0^T \int_0^t h^2(t, s) \, \e |X_{s \wedge \tau_n}^i - Y_{s \wedge \tau_n}^i|^p u_t(\mathrm{d} s) \diff t.
\end{align*} 
Define 
\begin{equation*}
    A(t) := \sup_{0 \leq s \leq t} \frac{1}{N} \sum_{i = 1}^N \e|X_{s \wedge \tau_n}^i - Y_{s \wedge \tau_n}^i|^p.
\end{equation*}
Then for all $T \leq \Tilde{T} \leq T'$
\begin{align*}
    \frac{1}{N} \sum_{i = 1}^N \frac{\e|X_{T \wedge \tau_n}^i - Y_{T \wedge \tau_n}^i|^p}{p} &\leq \left( |\kappa| + 2 D_T \right) \int_0^T A(t) \diff t \\
    &\leq \left( |\kappa| + 2 D_{T'} \right) \int_0^{\Tilde{T}} A(t) \diff t.
\end{align*}
By taking the supremum in $T \in [0, \Tilde{T}]$ in the left-hand side, we get that
\begin{align*}
    A(\Tilde{T}) \leq \left( |\kappa| + 2 D_{T'} \right) \int_0^{\Tilde{T}} A(t) \diff t.
\end{align*} 
Gronwall's lemma concludes that $X_{\cdot \wedge \tau_n}$ and $Y_{\cdot \wedge \tau_n}$ are indistinguishable, because we are working with a.s. continuous processes. Almost surely, both processes considered are continuous, so they are bounded on every compact $[0, T]$. Therefore, a.s. $\tau_n > T$, for $n$ large enough. This is true for $T > 0$ arbitrarily large, so that $\tau_n \xrightarrow[n \to + \infty]{\p} + \infty$. As a consequence, $\tau_n$ tends to $+ \infty$ a.s, up to a subsequence, and this concludes for the global strong uniqueness. Note that no condition on the parameters is needed.
\bigbreak

For the existence part, the classical route using Girsanov Theorem for weak existence and then Yamada-Watanabe principle to deduce strong existence would not work, since $\sigma$ can be a singular matrix. Let us proceed by using Banach-Picard fixed-point theorem. There are two difficulties to focus on. First, the memory term in the drift makes it impossible to use classical theorems, so we will need to manipulate a moment of order $p$. Plus, the external force $F$ is only locally Lipschitz, so we will need to localize.

\paragraph{Step 2. Prove the existence of a solution. To this end, let us first define truncated equations for which we need to prove the well-posedness.}
Define a truncated version of the force
\begin{equation}
\label{truncatedforce}
      F_n(x) = \left\{ \begin{array}{rl}
         &   F(x), \quad \text{ if $|x| \leq n$}\\
         &   F\left( \frac{nx}{|x|} \right), \quad \text{ if $|x| > n$}.
    \end{array} \right.
\end{equation} 
Then $F_n$ is globally Lipschitz, let us say with Lipschitz constant $L_n$. We define for every $n \in \mathbb{N}^*$
\begin{equation}
\label{EDSn}
    \diff X_t^{n, i} = \sigma \diff B_t^i - F_n(X_t^{n, i}) \diff t + \int_0^t  L\left(t, s, X_t^{n, i}, \frac{1}{N} \sum_{j = 1}^N \delta_{X_s^{n, j}}\right) u_t(\mathrm{d} s) \diff t, \quad (X_0^{n, 1}, \dots, X_0^{n, N}) \sim \mu.
\end{equation}
\begin{remark}
    Theorem $11.2$ of~\cite{Rogers_Williams_2000} wouldn't be applicable in our case to prove that ~\eqref{EDSn} is well posed because the drift term is $\int_0^t b(s, x_\cdot) \diff s$, which is time-homogeneous.
\end{remark}

In order to use Picard iteration argument, let us define on $\mathbb{L}^p \left( \Omega, \mathcal{C}\left( [0, T],\mathbb{R}^{d N} \right) \right)$ a functional $\Phi_n$, by using the weak formulation
\begin{equation*}
    \Phi_n(X)_t^i := \xi^i + \sigma B_t^i - \int_0^t F_n(X_s^i) \diff s + \int_0^t \int_0^s L(s, v, X_s^i, \frac{1}{N} \sum_{i=1}^N \delta_{X_v^j}) \, u_s(\mathrm{d} v) \diff s,
\end{equation*}
for all $0 \leq t \leq T$, if $\xi^i$ are $\mathcal{F}_0$-measurable distributed with law $\mu_i \in \mathcal{P}_m\left( \mathbb{R}^d \right)$. This expression is well defined for adapted processes. Let us fix $n$. Let $X$ and $Y \in \mathbb{L}^p \left( \Omega, \mathcal{C}\left( [0, T],\mathbb{R}^{d N} \right) \right)$ be two adapted processes. Then for every $i = 1, \dots, N$
\begin{align*}
    |\Phi_n(X)_T^i - \Phi_n(Y)_T^i|^p \leq& 3^{p-1} \left| \int_0^T (  F_n(X_t^i) -   F_n(Y_t^i)) \diff t \right|^p \\
    &+ 3^{p-1} \left| \int_0^T \int_0^t \left(L(t, s, X_t^i, \frac{1}{N} \sum_{j=1}^N \delta_{X_s^j}) - L(t, s, Y_t^i, \frac{1}{N} \sum_{j=1}^N \delta_{X_s^j})\right) \, u_t(\mathrm{d} s) \diff t \right|^p \\
    &+ 3^{p-1} \left| \int_0^T \int_0^t \left(L(t, s, Y_t^i, \frac{1}{N} \sum_{j=1}^N \delta_{X_s^j}) - L(t, s, Y_t^i, \frac{1}{N} \sum_{j=1}^N \delta_{Y_s^j})\right) \, u_t(\mathrm{d} s) \diff t \right|^p \\
    \leq& 3^{p-1} T^{p-1} \int_0^T \left|  F_n(X_t^i) -   F_n(Y_t^i)\right|^p \diff t \\
    &+ 3^{p-1} T^{p-1} \int_0^T \left( \int_0^t \left|L(t, s, X_t^i, \frac{1}{N} \sum_{j=1}^N \delta_{X_s^j}) - L(t, s, Y_t^i, \frac{1}{N} \sum_{j=1}^N \delta_{X_s^j})\right| \, u_t(\mathrm{d} s) \right)^p \diff t \\
    &+ 3^{p-1} T^{p - 1} \int_0^T \left( \int_0^t \left|L(t, s, Y_t^i, \frac{1}{N} \sum_{j=1}^N \delta_{X_s^j}) - L(t, s, Y_t^i, \frac{1}{N} \sum_{j=1}^N \delta_{Y_s^{n j}})\right| \, u_t(\mathrm{d} s) \right)^p \diff t,
\end{align*}
Therefore, by denoting $M_n$ the Lipschitz constant of $F$ on $\{ x \in \mathbb{R}^d, \, |x| \leq n \}$, we get by Jensen inequality that for all $i = 1, \dots, N$
\begin{align*}
    |\Phi_n(X)_t^i - \Phi_n(Y)_t^i|^p \leq& 3^{p-1} T^p M_n^p \int_0^T \e|X_t^i - Y_t^i|^p \diff t \\
    &+ 3^{p-1} T^{p - 1} \int_0^T \left( \int_0^t h^1(t, s) \left|X_t^i - Y_t^i\right| \, u_t(\mathrm{d} s) \right)^p \diff t \\
    &+ 3^{p-1} T^{p - 1} D_T^p \int_0^T \left( \int_0^t \frac{h^2(t, s)}{\int_0^t h^2(t, s) \, u_t(\mathrm{d} s)} W_p(\frac{1}{N} \sum_{j=1}^N \delta_{X_s^j} \, , \, \frac{1}{N} \sum_{j=1}^N \delta_{Y_s^j}) \, u_t(\mathrm{d} s) \right)^p \diff t \\
    \leq& 3^{p-1} T^p M_n^p \int_0^T |X_t^i - Y_t^i|^p \diff t \\
    &+ 3^{p-1} T^{p - 1} D_T^p \int_0^T \left|X_t^i - Y_t^i\right|^p \, u_t(\mathrm{d} s) \diff t \\
    &+ 3^{p-1} T^{p - 1} D_T^{p - 1} \int_0^T \int_0^t h^2(t, s) W_p^p(\frac{1}{N} \sum_{j=1}^N \delta_{X_s^j} \, , \, \frac{1}{N} \sum_{j=1}^N \delta_{Y_s^j}) \, u_t(\mathrm{d} s) \diff t.
\end{align*}
By using the fact that a.s, $W_p^p(\frac{1}{N} \sum_{j=1}^N \delta_{X_s^j} \, , \, \frac{1}{N} \sum_{j=1}^N \delta_{Y_s^j}) \leq \frac{1}{N} \sum_{i = 1}^N |X_s^i - Y_s^i|^p$, by bounding above the infimum in the definition of the Wasserstein distance by a term with the coupling $\pi := \frac{1}{N} \sum_{i = 1}^N \delta_{(X_s^i, Y_s^i)}$, and taking the supremum in time in the right-hand term, we get that for all $0 \leq t \leq T$
\begin{align*}
    |\Phi_n(X)_t^i - \Phi_n(Y)_t^i|^p \leq& 3^{p-1} T^p M_n^p \int_0^T \sup_{0 \leq s \leq t} |X_s^i - Y_s^i|^p \diff t \\
    &+ 3^{p-1} T^{p - 1} D_T^p \int_0^T \sup_{0 \leq s \leq t}  \left|X_s^i - Y_s^i\right|^p \diff t \\
    &+ 3^{p-1} T^{p - 1} D_T^p \frac{1}{N} \sum_{i = 1}^N \int_0^T \sup_{0 \leq s \leq t} |X_s^i - Y_s^i|^p \diff t.
\end{align*}
We have such an inequality for each $i = 1, \dots, N$, so by summing over $i$ and taking the supremum in the left-hand side and immediately after the expectation, we get 
\begin{equation*}
    \frac{1}{N} \sum_{i=1}^N \e\left[ \sup_{0 \leq t \leq T} |\Phi_n(X)_t^i - \Phi_n(Y)_t^i|^p \right] \leq K_T \frac{1}{N} \sum_{i=1}^N \int_0^T \e\left[ \sup_{0 \leq s \leq t} |X_s^i - Y_s^i|^p \right] \diff t,
\end{equation*}
with $K_T := 3^{p-1} T^{p - 1} \left( T M_n^p + 2 D_T^p \right)$.
Start from an a.s. continuous adapted process $X$ to define a sequence of approximations of the solution to~\eqref{EDSn}. All of the iterations are well defined by induction because each one is adapted. The iterations are also a.s. continuous, because of the regularity assumptions on the external force $F$ and of $t \mapsto \int_0^t L(t, s, x, \mu) \, u_t(\mathrm{d} s)$ (see Assumption~\ref{assumparticles}). By a classical iteration argument, we get that 
\begin{equation*}
    \frac{1}{N} \e\left[ \sum_{i=1}^N \sup_{0 \leq t \leq T} |\Phi_n^{k+1}(X)_t^i - \Phi_n^k(X)_t^i|^p \right] \leq \frac{K_T^k T^k}{k !} \frac{1}{N} \sum_{i=1}^N \e\left[ \sup_{0 \leq t \leq T} |\Phi_n(X)_t^i - X_t^i|^p \right],
\end{equation*}
so that $(\Phi_n^k(X))_k$ is a Cauchy sequence that converges in the complete metric space $\mathbb{L}^p \left( \Omega, \mathcal{C}\left( [0, T],\mathbb{R}^{d N} \right) \right)$, therefore it converges. The limit $X$ is a.s. continuous, and adapted to the completed filtration, because $X_t$ is an a.s. limit (up to a subsequence) of $\mathcal{F}_t$-measurable random variables. The process $X$ is also solution to~\eqref{particles}, since it satisfies its integral form, by dominated convergence.

The limit is therefore continuous and adapted, so it is by definition a solution to~\eqref{EDSn}. Note that the strong uniqueness is immediate from the Picard argument, since the $\mathbb{L}^p$ norm controls the a.s. convergence.

\paragraph{Step 3. Define a candidate for the global solution.}
From the previous step, we get the existence of $X^n$ solution to~\eqref{EDSn}. Set $\tau_n := \inf\{ t, \, \exists i, \, |X_t^{n, i}| \geq n\}$. 
For $k \leq n$, the process $X^n$ stopped at the time $\inf\{ t, \, \exists i, \, |X_t^{n, i}| \geq k\}$ is solution to~\eqref{EDSn} with $k$, so by uniqueness proved in the previous step, it coincides with $X^k$ until $\tau_k$. Define $X_t^i := X_t^{n , i}$ for every $t \in [0, \tau_n]$. We then need to check that $\tau_n \xrightarrow[n \to +\infty]{a.s.} +\infty$.

\paragraph{Step 4. Verify that the patching of local solutions recovers a global solution with uniform (in localization parameter $n$) bound of moments.}
In other words, our goal is to prove that $\tau_n \xrightarrow[n \to +\infty]{a.s.} +\infty$. For this, we need an estimate on $\sup_{0 \leq t \leq T} \e|X_{t \wedge \tau_n}^{n, i}|^q$, for a suitable $q$. Let us apply Itô formula with $x \mapsto |x|^m \in \mathcal{C}^2$ (because $m \geq 2$). We have
\begin{align*}
|X_{T \wedge \tau_n}^{n, i}|^m =& |\xi^i|^m - \int_0^{T \wedge \tau_n} m |X_t^{n, i}|^{m-2} \ X_t^{n, i} \cdot \left( F(X_t^{n, i}) -   F(0) \right) \diff t - \int_0^{T \wedge \tau_n} m |X_t^{n, i}|^{m-2} \ X_t^{n, i} \cdot F(0) \ \diff t \\
&+ \int_0^{T \wedge \tau_n} m |X_t^{n, i}|^{m-2} \ X_t^{n, i} \cdot \int_0^t \left[ L(t, s, X_t^{n, i}, \frac{1}{N} \sum_{j=1}^N \delta_{X_s^{n, j}}) - L(t, s, X_t^{n, i}, \delta_0) \right] \, u_t(\mathrm{d} s) \diff t \\
&+ \int_0^{T \wedge \tau_n} m |X_t^{n, i}|^{m-2} \ X_t^{n, i} \cdot \int_0^t \left[ L(t, s, X_t^{n, i}, \delta_0) - L(t, s, 0, \delta_0) \right] \, u_t(\mathrm{d} s) \diff t \\
&+ \int_0^{T \wedge \tau_n} m |X_t^{n, i}|^{m-2} \ X_t^{n, i} \cdot \int_0^t L(t, s, 0, \delta_0) \, u_t(\mathrm{d} s) \diff t \\
&+ \int_0^{T \wedge \tau_n} m |X_t^{n, i}|^{m-2} \ X_t^{n, i} \cdot \sigma \diff B_t^i \\
&+ \frac{1}{2} \int_0^{T \wedge \tau_n} \sum_{k, l = 1}^d \left[ m(m-2) |X_t^{n, i}|^{m-4} (X_t^{n, i})_k (X_t^{n, i})_j + m |X_t^{n, i}|^{m-2} \delta_{k, l}\right] \diff [(X_t^{n, i})_k, (X_t^{n, i})_l] \\
\leq& |\xi^i|^m - \kappa \int_0^T m |X_{t \wedge \tau_n}^{n, i}|^m \diff t + \int_0^T m |X_{t \wedge \tau_n}^{n, i}|^{m-1} |F(0)| \diff t \\
&+ \int_0^T m |X_{t \wedge \tau_n}^{n, i}|^{m-1} \int_0^t h^2(t, s) W_p(\delta_0, \frac{1}{N} \sum_{i=1}^N \delta_{X_{s \wedge \tau_n}^{n, j}}) u_{t \wedge \tau_n}(\mathrm{d} s) \diff t \\
&+ \int_0^T m |X_{t \wedge \tau_n}^{n, i}|^{m-1} \int_0^t \left| L(t, s, X_{t \wedge \tau_n}^{n, i}, \delta_0) - L(t, s, 0, \delta_0) \right| u_t(\mathrm{d} s) \diff t \\
&+ \int_0^T m |X_{t \wedge \tau_n}^{n, i}|^{m-1} \left| \int_0^t L(t, s, 0, \delta_0) \, u_t(\mathrm{d} s) \right| \diff t \\
&+ \int_0^T m |X_{t \wedge \tau_n}^{n, i}|^{m-2} \ X_{t \wedge \tau_n}^{n, i} \cdot \sigma \diff B_{t \wedge \tau_n}^i \\
&+ \frac{1}{2} \int_0^T \left[ m(m-2) |X_{t \wedge \tau_n}^{n, i}|^{m-4} |^t \sigma X_{t \wedge \tau_n}^{n, i}|^2 + m |X_{t \wedge \tau_n}^{n, i}|^{m-2} \mbox{tr}(\sigma ^t \sigma ) \right] \diff t,
\end{align*}
because for $0 \leq t \leq T \wedge \tau_n$, $t \wedge \tau_n = t$, and for $0 \leq s \leq t$, $s \wedge \tau_n = s$ as well.
Let us take the expectation, using that the term $\displaystyle \int_0^t |X_{s \wedge \tau_n}^{n, i}|^{m-2} \ X_{s \wedge \tau_n}^{n, i} \cdot \sigma \diff B_s^i$ is an $\mathbb{L}^2$-martingale, because the process $(X_{t \wedge \tau_n}^{n, i})_t$ is bounded by $n$, use the inequality $W_p\left( \delta_0, \frac{1}{N} \sum_{i = 1}^N \delta_{X_{s \wedge \tau_n}^{n, j}} \right) \leq \left( \frac{1}{N} \sum_{j=1}^N |X_{s \wedge \tau_n}^{n, j}|^m \right)^{1/m}$ that uses $p \leq m$, and apply Young inequality with $\frac{m - 1}{m} + \frac{1}{m} = 1$. We get
\begin{align*}
    \e|X_{T \wedge \tau_n}^{n, i}|^m \leq& \e|\xi^i|^m + \int_0^T m | \kappa | \e|X_{t \wedge \tau_n}^{n, i}|^m \diff t + \int_0^T (m - 1) \e|X_{t \wedge \tau_n}^{n, i}|^m \diff t + T | F(0)|^m \\
    &+ \e \int_0^T m |X_{t \wedge \tau_n}^{n, i}|^{m-1} \int_0^t h^2(t, s) \left( \frac{1}{N} \sum_{j=1}^N |X_{s \wedge \tau_n}^{n, j}|^m \right)^{1/m} u_t(\mathrm{d} s) \diff t \\
    &+ \int_0^T m H_t^1 \e|X_{t \wedge \tau_n}^{n, i}|^m \diff t \\
    &+ (m - 1) \int_0^T \e|X_{t \wedge \tau_n}^{n, i}|^m \diff t + \int_0^T \left( \int_0^t L(t, s, 0, \delta_0) \, u_t(\mathrm{d} s) \right)^m \diff t \\
    &+ \frac{1}{2} \int_0^T \e \left[ m(m-2) |X_{t \wedge \tau_n}^{n, i}|^{m-4} \cdot \|^t \sigma \|^2 | X_{t \wedge \tau_n}^{n, i}|^2 + m \mbox{tr}(\sigma ^t \sigma ) |X_{t \wedge \tau_n}^{n, i}|^{m-2} \right] \diff t \\
    \leq& \e|\xi^i|^m + (m | \kappa | + m - 1) \int_0^T \e|X_{t \wedge \tau_n}^{n, i}|^m \diff t + T |F(0)|^m \\
    &+ (m - 1) D_T \int_0^T \e|X_{t \wedge \tau_n}^{n, i}|^m \diff t + \frac{1}{N} \sum_{j=1}^N \int_0^T \int_0^t h^2(t, s) \e|X_{s \wedge \tau_n}^{n, j}|^m \, u_t(\mathrm{d} s) \diff t\\
    &+ m D_T \int_0^T \e|X_{t \wedge \tau_n}^{n, i}|^m \diff t \\
    &+ (m - 1) \int_0^T \e|X_{t \wedge \tau_n}^{n, i}|^m \diff t + \int_0^T \left( \int_0^t L(t, s, 0, \delta_0) \, u_t(\mathrm{d} s) \right)^m \diff t \\
    &+ \frac{1}{2} \int_0^T \left[ m(m-2) \e|X_{t \wedge \tau_n}^{n, i}|^{m-2} \cdot \|^t \sigma \|^2 + m \, \mbox{tr}(\sigma ^t \sigma ) \e|X_{t \wedge \tau_n}^{n, i}|^{m-2} \right] \diff t.
\end{align*}
Let us sum the previous inequality from $i = 1, \dots, N$. By using Young inequality for $\frac{m - 2}{m} + \frac{2}{m} = 1$ and then taking the supremum in time in the right-hand side, we have 
\begin{align*}
    \sum_{i=1}^N \e|X_{T \wedge \tau_n}^{n, i}|^q \leq& \sum_{i=1}^N \e|\xi^i|^m + N T | F(0)|^m + N \int_0^T \left( \int_0^t L(t, s, 0, \delta_0) \, u_t(\mathrm{d} s) \right)^m \diff t + N T (m - 2) \| ^t \sigma \|^2 + N T \tr(\sigma ^t \sigma) \\
    &+ \left(m | \kappa | + (m - 1)(2 + D_T) + m D_T + \frac{(m - 2)^2 \| ^t \sigma \|^2}{2} + \frac{(m - 2) \, \mbox{tr}(\sigma ^t \sigma)}{2}\right) \sum_{i=1}^N \int_0^T \e|X_{t \wedge \tau_n}^{n, i}|^m \diff t  \\
    &+ \sum_{i=1}^N \int_0^T \int_0^t h^2(t, s) \e|X_{s \wedge \tau_n}^{n, i}|^m \, u_t(\mathrm{d} s) \diff t \\
    \leq& \sum_{i=1}^N \e|\xi^i|^m + N T | F(0)|^m + N \int_0^T \left( \int_0^t L(t, s, 0, \delta_0) \, u_t(\mathrm{d} s) \right)^m \diff t + N T (m - 2) \| ^t \sigma \|^2 + N T \tr(\sigma ^t \sigma) \\
    &+ \left(m | \kappa | + (m - 1)(2 + D_T) + (m + 1) D_T + \frac{(m - 2)^2 \| ^t \sigma \|^2}{2} + \frac{(m - 2) \, \mbox{tr}(\sigma ^t \sigma)}{2} \right) \\
    & \quad \quad \quad \quad \quad \quad \quad \quad \quad \quad \quad \quad \quad \quad \quad \quad \quad \quad \quad \quad \quad \quad \quad \quad \quad \quad \quad \quad \times \sum_{i=1}^N \int_0^T \sup_{0 \leq s \leq t} \e|X_{s \wedge \tau_n}^{n, i}|^m \diff t.
\end{align*}
Therefore, for all $0 \leq t \leq T \leq \Tilde{T}$
\begin{align*}
    \sum_{i=1}^N \e|X_{t \wedge \tau_n}^{n, i}|^m \leq& \sum_{i=1}^N \e|\xi^i|^m + N \Tilde{T} |F(0)|^m + N \int_0^{\Tilde{T}} \left( \int_0^t L(t, s, 0, \delta_0) \, u_t(\mathrm{d} s) \right)^m \diff t + N \Tilde{T} (m - 2) \| ^t \sigma \|^2 + N \Tilde{T} \tr(\sigma ^t \sigma) \\
    &+ \left(m | \kappa | + (m - 1)(2 + D_{\Tilde{T}}) + (m + 1) D_{\Tilde{T}} + \frac{(m - 2)^2 \| ^t \sigma \|^2}{2} + \frac{(m - 2) \, \mbox{tr}(\sigma ^t \sigma)}{2} \right) \\
    & \quad \quad \quad \quad \quad \quad \quad \quad \quad \quad \quad \quad \quad \quad \quad \quad \quad \quad \quad \quad \quad \quad \quad \quad \quad \quad \quad \quad \times \sum_{i=1}^N \int_0^T \sup_{0 \leq s \leq t} \e|X_{s \wedge \tau_n}^{n, i}|^m \diff t,
\end{align*}
so that by taking the supremum for the left-hand side, we get for all $T \leq \Tilde{T}$
\begin{align*}
    \sum_{i=1}^N \sup_{0 \leq t \leq T} \e|X_{t \wedge \tau_n}^{n, i}|^m \leq& \sum_{i=1}^N \e|\xi^i|^m + N \Tilde{T} |F(0)|^m + N \int_0^{\Tilde{T}} \left| \int_0^t L(t, s, 0, \delta_0) \, u_t(\mathrm{d} s) \right|^m \diff t \\
    &+ N \Tilde{T} (m - 2) \| ^t \sigma \|^2 + N \Tilde{T} \tr(\sigma ^t \sigma) \\
    &+ \left(m | \kappa | + (m - 1)(2 + D_{\Tilde{T}}) + (m + 1) D_{\Tilde{T}} + \frac{(m - 2)^2 \| ^t \sigma \|^2}{2} + \frac{(m - 2) \, \mbox{tr}(\sigma ^t \sigma)}{2} \right) \\
    & \quad \quad \quad \quad \quad \quad \quad \quad \quad \quad \quad \quad \quad \quad \quad \quad \quad \quad \quad \quad \quad \quad \quad \quad \times \sum_{i=1}^N \int_0^T \sup_{0 \leq s \leq t} \e|X_{s \wedge \tau_n}^{n, i}|^m \diff t.
\end{align*}
Note that the term $\int_0^T \left| \int_0^t L(t, s, 0, \delta_0) \, u_t(\mathrm{d} s) \right|^m \diff t$ is finite thanks to Assumption~\ref{assumparticles} on the continuity of the integrand. Then by Gronwall's lemma, we have
\begin{equation}
\label{bound}
    \sum_{i=1}^N \sup_{0 \leq t \leq T} \e|X_{t \wedge \tau_n}^{n, i}|^m \leq C(m, T, N),
\end{equation}
with $C(m, T, N)$ a constant depending on $m$, $T$ and $N$.

Since we have, for all $0 \leq t \leq T$ 
\begin{equation*}
    \e\left(|X_{t \wedge \tau_n}^{n, i}|^m \right) \geq \e\left( |X_{t \wedge \tau_n}^{n, i}|^m \un_{\{ \tau_n \leq T \}} \right),
\end{equation*}
then by taking the supremum over $t \in [0, T]$, and summing from $i  = 1, \dots, N$, we get
\begin{align*}
    \sum_{i = 1}^N \sup_{0 \leq t \leq T} \e |X_{t \wedge \tau_n}^{n, i}|^m &\geq \sum_{i = 1}^N \sup_{0 \leq t \leq T} \e \left(|X_{t \wedge \tau_n}^{n, i}|^m \un_{\{ \tau_n \leq T \}}\right) \\
    &\geq n^m \p\left( \tau_n \leq T \right).
\end{align*}
By boundedness of the left hand side term uniformly in $n$, and because $m > 1$, we know that $\sum_n \p\left[ \tau_n \leq T \right] < \infty$. By Borel-Cantelli theorem, we conclude that for all $T > 0$, $\p\left( \underline{\lim} \{ \tau_n \leq T \} \right) = 0$, so that $\tau_n \xrightarrow[n \to + \infty]{a.s.} +\infty$. This concludes the proof of the existence.
\begin{remark}
    Remark that $X_{t \wedge \tau_n}^{n, i} = X_{t \wedge \tau_n} \xrightarrow[n \to + \infty]{a.s.} X_t$ by definition of $X_t$. Therefore by application of Fatou's lemma in~\eqref{bound}, we get that $\e|X_t|^m \leq C(m, T, N)$ for all $0 \leq t \leq T$, and by taking the supremum in time, we finally get
    \begin{equation*}
        \sum_{i=1}^N \sup_{0 \leq t \leq T} \e|X_t|^m \leq C(m, T, N).
    \end{equation*}
\end{remark}

\subsection{Proof of well-posedness of the McKean-Vlasov limit equation (Proposition~\ref{wellposedness})}
In~\cite{theseMilica}, Chapter $2$, section $2.2$, Milica Tomašević shows the existence and uniqueness for strong solutions of systems similar to~\eqref{particles} and~\eqref{limit}, with a slight change since we integrate the memory kernel with respect to the measure $u_t(\mathrm{d} s)$ rather than $\diff s$. We follow the sketch of her proof.
\begin{proof}
    Let $T > 0$. Denote $\mathcal{C}_T = \mathcal{C}([0, T], \mathcal{P}_p\left( \mathbb{R}^d \right))$.
    
    Let us proceed by fixed-point argument. Define the map $\Phi : \mathcal{C}_T \to \mathcal{C}_T$ that associates $m$ to the law of the solution to the following SDE 
    \begin{equation}
    \label{SDEMcKeanVlasov}
        \diff \bar{X}_t = \sigma \diff B_t - F(\bar{X}_t) \diff t + \int_0^t L\left(t, s, \bar{X}_t, m(s) \right) u_t(\mathrm{d} s) \ \diff t, \quad 0 \leq t \leq T,
    \end{equation} 
    with initial condition $\bar{X}_0 \sim p_0 \in \mathcal{P}_q\left( \mathbb{R}^{d} \right) \subset \mathcal{P}_p\left( \mathbb{R}^{d} \right)$ because $q \geq p$. Let us first prove that the map $\Phi$ is well defined.
    
    Let us fix $m \in \mathcal{C}_T$. The SDE~\eqref{SDEMcKeanVlasov} has a unique solution because by Assumption $1$, the drift
    \begin{equation*}
        b(t, x) := - F(x) + \int_0^t L(t, s, x, m(s)) \, u_t(\mathrm{d} s)
    \end{equation*} 
    is continuous in $t$, locally Lipschitz in $x$, and satisfies the local monotonicity condition required in Theorem~$1.1$ of~\cite{LanWusufficientconditionsSDE}.
    
    Let us then prove a contraction property and conclude.
    Let us take $m_1$, $m_2 \in  \mathcal{C}_T$ and define the two processes \[ \bar{X}_t^1 = \bar{X}_0 + \sigma B_t - \int_0^t   F(\bar{X}_s^1) \diff s + \int_0^t \int_0^s L\Huge(s, u, \bar{X}_s^1, m_1(u) \Huge) u_t(\mathrm{d} u) \, \diff t, \quad 0 \leq t \leq T\] and \[\bar{X}_t^2 = \bar{X}_0 + \sigma B_t - \int_0^t   F(\bar{X}_s^2) \diff s + \int_0^t \int_0^s L\Huge(s, u, \bar{X}_s^2, m_2(u)\Huge) u_t(\mathrm{d} s) \ \diff t, \quad 0 \leq  t \leq T\] respectively. We have $Law(\bar{X}_\cdot^1) = \Phi(m_1)$ and $Law(\bar{X}_\cdot^2) = \Phi(m_2)$. Let us prove that $\Phi$ is a contraction with respect to the distance 
    \[
    D(m_1, m_2) := \left( \underset{0 \leq t \leq T}{\sup} W_p^p(m_1(t), m_2(t)).
   \right)^{1/p} 
   \]
   Let us differentiate the power of the gap between $\bar{X}_\cdot^1$ and $\bar{X}_\cdot^2$ and use the Lipschitz hypotheses
    \begin{align*}
        \frac{\diff}{\diff t} \left[ \frac{|\bar{X}_t^1 - \bar{X}_t^2|^p}{p}\right] =& - |\bar{X}_t^1 - \bar{X}_t^2|^{p-2} \left( \bar{X}_t^1 - \bar{X}_t^2 \right) \cdot \left( F(\bar{X}_t^1) -   F(\bar{X}_t^2) \right) \\
        &+ |\bar{X}_t^1 - \bar{X}_t^2|^{p-2} \left( \bar{X}_t^1 - \bar{X}_t^2 \right) \cdot \left[ \int_0^t L(t, s, \bar{X}_t^1, m_1(s)) u_t(\mathrm{d} s) - \int_0^t L(t, s, \bar{X}_t^2, m_2(s)) u_t(\mathrm{d} s) \right] \\
        \leq& |\kappa| \left| \bar{X}_t^1 - \bar{X}_t^2 \right| \\
        &+ |\bar{X}_t^1 - \bar{X}_t^2|^{p-1} \left| \int_0^t L(t, s, \bar{X}_t^1, m_1(s)) u_t(\mathrm{d} s) - \int_0^t L(t, s, \bar{X}_t^2, m_1(s)) u_t(\mathrm{d} s) \right| \\
        &+ |\bar{X}_t^1 - \bar{X}_t^2|^{p-1} \left| \int_0^t L(t, s, \bar{X}_t^2, m_1(s)) u_t(\mathrm{d} s) - \int_0^t L(t, s, \bar{X}_t^2, m_2(s)) u_t(\mathrm{d} s) \right| \\
        \leq& (|\kappa| + 1) |\bar{X}_t^1 - \bar{X}_t^2|^p + |\bar{X}_t^1 - \bar{X}_t^2|^{p-1} D_T \int_0^t \frac{h^2(t, s)}{\int_0^t h^2(t, v) u_t(\mathrm{d} v)} W_p(m_1(s) , m_2(s)) u_t(\mathrm{d} s).
    \end{align*}
    Note that by parallel coupling, $t \mapsto \bar{X}_t^1 - \bar{X}_t^2$ satisfies an ODE because the Brownian motions cancel out. So we only need $x \mapsto |x|^p$ to be $\mathcal{C}^1$ to differentiate. The case $p = 1$ needs to be treated aside by a regularization trick we will not perform here.
    
    By taking the expectation of the previous inequality and applying Young inequality with $\frac{1}{p} + \frac{p-1}{p} = 1$, we get
    \begin{align*}
        \frac{\diff}{\diff t} \left[ \frac{\e|\bar{X}_t^1 - \bar{X}_t^2|^p}{p}\right] &\leq (|\kappa| + 1 + \frac{p-1}{p}) \, \e|\bar{X}_t^1 - \bar{X}_t^2|^p + \frac{1}{p} \left(\int_0^t D_T \frac{h^2(t, s)}{\int_0^t h^2(t, v) u_t(\mathrm{d} v)} W_p(m_1(s) , m_2(s)) u_t(\mathrm{d} s) \right)^p \\
        &\leq (p |\kappa| + 2p - 1) \frac{\e|\bar{X}_t^1 - \bar{X}_t^2|^p}{p} + \frac{D_T^p}{p} \int_0^t \frac{h^2(t, s)}{\int_0^t h^2(t, v) u_t(\mathrm{d} v)}  W_p^p(m_1(s) , m_2(s)) u_t(\mathrm{d} s),
    \end{align*} by using Jensen inequality for the last inequality. Then by taking the supremum in time for the Wasserstein distance, we get \begin{equation*}
        \frac{\diff}{\diff t} \left[ \frac{\e|\bar{X}_t^1 - \bar{X}_t^2|^p}{p}\right] \leq (p |\kappa| + 2p - 1) \frac{\e|\bar{X}_t^1 - \bar{X}_t^2|^p}{p} + \frac{D_T^p}{p} \underset{0 \leq s \leq t}{\sup} W_p^p(m_1(s) , m_2(s)).
    \end{equation*} 
    This way, by Gronwall's lemma, we have
    \begin{equation}
    \label{distancecontrol}
    \frac{\e|\bar{X}_t^1 - \bar{X}_t^2|^p}{p} \leq C_T \underset{0 \leq s \leq t}{\sup} W_p^p(m_1(s) , m_2(s)),\end{equation} 
    with $C_T := \frac{D_T^p}{p} e^{(p |\kappa| + 2p - 1)T}$. By using that $W_p^p\left(Law(\bar{X}_s^1), Law(\bar{X}_s^2)\right) \leq \e|\bar{X}_s^1 - \bar{X}_s^2|^p$, then taking the supremum in time, we get \begin{equation*}
    \underset{0 \leq t \leq T}{\sup} W_p^p\left(Law(\bar{X}_t^1), Law(\bar{X}_t^2)\right) \leq p T C_T \underset{0 \leq t \leq T}{\sup} W_p^p(m_1(u) , m_2(u)) \diff s,
    \end{equation*} 
    so that \[D(\Phi(m_1), \Phi(m_2)) \leq (p T C_T)^{1/p} D(m_1, m_2).\] For $T$ small enough, it means $\Phi : \mathcal{C}_T \to \mathcal{C}_T$ is contracting, so that it admits a unique fixed point.
    
    Start from a given $m \in \mathcal{C}_T$. Iterating the previous integration argument transforms \[
    D(\Phi^{k+1}(m), \Phi^k(m_2)) \leq (p C_T T)^{1/p} D(\Phi^k(m), \Phi^{k-1}(m))
    \] 
    into 
    \[ 
    D(\Phi^{k+1}(m), \Phi^k(m)) \leq p^{1/p} \frac{C_T^{k/p} T^{k/p}}{k !} D(\Phi(m), m),
    \] 
    for all $k \geq 1$. So $(\Phi^k(m))_k$ is a Cauchy sequence that converges because $\mathcal{C}_T$ is complete with respect to the metric $D$. The limit of this sequence is therefore the unique fixed point of $\Phi$. 
    This gives the weak existence and weak uniqueness of the solution to~\eqref{limit} on $[0, T]$.
    
    Let $m^\ast$ be the fixed point of $\phi$. The frozen SDE associated with $m^\ast$ admits a unique strong solution $\bar{X}$. Since $Law(\bar{X}_t) = \phi(m^\ast)(t) = m^\ast(t)$, $\bar{X}$ is a strong solution of~\eqref{limit}.
    
    Then, taking the supremum on time of the inequality~\eqref{distancecontrol}, we have 
    \begin{equation*}
        \underset{0 \leq s \leq t}{\sup} \e|\bar{X}_s^1 - \bar{X}_s^2|^p \leq p T C_T \underset{0 \leq s \leq t}{\sup} W_p^p(m_1(s), m_2(s)),
    \end{equation*} which allows to deduce the strong uniqueness because if $m_1 \equiv m_2$ then $\bar{X}_t^1 = \bar{X}_t^2$ a.s., and the processes $\bar{X}^1$ and $\bar{X}^2$ are indistinguishable because they are a.s. continuous.
    \bigbreak
    
    Let $q \geq p$. The memory kernel $L$ is supposed $W_p$-Lipschitz, so it is also $W_q$-Lipschitz. If we suppose $Law(\bar{X}_0) \in \mathcal{P}_q\left( \mathbb{R}^d \right)$, then a similar reasoning gives the existence and uniqueness of a solution in $\mathcal{C}([0, T], \mathcal{P}_q\left( \mathbb{R}^d \right))$. Since $\mathcal{C}([0, T], \mathcal{P}_q\left( \mathbb{R}^d \right)) \subset \mathcal{C}([0, T], \mathcal{P}_p\left( \mathbb{R}^d \right))$, the uniqueness of solution in $\mathcal{C}([0, T], \mathcal{P}_p\left( \mathbb{R}^d \right))$ gives that the solution $\bar{X}_t$ found in $\mathcal{C}([0, T], \mathcal{P}_p\left( \mathbb{R}^d \right))$ is indeed in $\mathcal{C}([0, T], \mathcal{P}_q\left( \mathbb{R}^d \right))$.
    \end{proof}
    
\paragraph{Acknowledgement.} The author would like to thank Pierre Monmarché and Milica Tomašević for their valuable guidance on the subject, the questions to cover, and for the technical discussions.

\bibliographystyle{alpha}
\bibliography{bibli}

\end{document}